\definecolor{light-gray}{gray}{0.75}
\numberwithin{equation}{section}
\theoremstyle{plain}
\newtheorem{theorem}{Theorem}[section]
\newtheorem{corollary}[theorem]{Corollary}
\newtheorem{lemma}[theorem]{Lemma}
\newtheorem{proposition}[theorem]{Proposition}
\theoremstyle{definition}
\newtheorem{definition}[theorem]{Definition}
\newtheorem{example}[theorem]{Example}
\newtheorem{problem}[theorem]{Problem}
\newtheorem{question}[theorem]{Question}
\newcommand{\cref}[1]{Corollary~\ref{#1}}
\newcommand{\tref}[1]{Theorem~\ref{#1}}
\newcommand{\ldiv}{\backslash}
\newcommand{\rdiv}{/}
\newcommand{\vhi}{\varphi}
\newcommand{\gr}{\mathbf G}
\newcommand{\img}{\operatorname{Im}}
\newcommand{\inn}{\operatorname{Inn}}
\newcommand{\m}{^{-1}}
\newcommand{\Ker}{\operatorname{Ker}}
\newcommand{\aut}{\operatorname{Aut}}
\def\setof#1#2{\{#1:#2\}}
\def\genof#1#2{\langle #1:#2 \rangle}
\def\supp#1{\mathrm{supp}(#1)}
\def\lesd{\le_{\mathrm{sd}}} 
\def\eqsymb{\varepsilon}
\begin{document}
\title[Subdirect products and propagating equations]{Subdirect products and propagating equations\\ with an application to Moufang Theorem}

\author{Ale\v s Dr\'apal}
\address[Dr\'apal]{Department of Mathematics, Charles University,
Sokolovsk\'a 83, 186 75, Praha 8, Czech Republic}
\email{drapal@karlin.mff.cuni.cz}

\author{Petr Vojt\v echovsk\' y}
\address[Vojt\v{e}chovsk\'y]{Department of Mathematics, University of Denver,
2390 S.~York St, Denver, Colorado, 80208, USA}
\email{petr@math.du.edu}

\begin{abstract}
We introduce the concept of propagating equations and focus on the case of associativity propagating in varieties of loops. 

An equation $\eqsymb$ propagates in an algebra $X$ if $\eqsymb(\overrightarrow y)$ holds whenever $\eqsymb(\overrightarrow x)$ holds and the elements of $\overrightarrow y$ are contained in the subalgebra of $X$ generated by $\overrightarrow x$. If $\eqsymb$ propagates in $X$ then it propagates in all subalgebras and products of $X$ but not necessarily in all homomorphic images of $X$. If $\mathcal V$ is a variety, the propagating core $\mathcal V_{[\eqsymb]} = \setof{X\in\mathcal V}{\eqsymb\text{ propagates in }X}$ is a quasivariety but not necessarily a variety.

We prove by elementary means Goursat's Lemma for loops and describe all subdirect products of $X^k$ and all finitely generated loops in $\mathbf{HSP}(X)$ for a nonabelian simple loop $X$. If $\mathcal V$ is a variety of loops in which associativity propagates, $X$ is a finite loop in which associativity propagates and every subloop of $X$ is either nonabelian simple or contained in $\mathcal V$, then associativity propagates in $\mathbf{HSP}(X)\lor\mathcal V$.

We study the propagating core $\mathcal S_{[x(yz)=(xy)z]}$ of Steiner loops with respect to associativity. While this is not a variety, we exhibit many varieties contained in $\mathcal S_{[x(yz)=(xy)z]}$, each providing a solution to Rajah's problem, i.e., a variety of loops not contained in Moufang loops in which Moufang Theorem holds.
\end{abstract}

\keywords{Propagation of equations, quasi-variety, Moufang Theorem, subdirect product, Moufang loop, Steiner loop, oriented Steiner loop}

\subjclass{Primary: 20N05. Secondary: 03C05, 05B07.}

\thanks{Ale\v{s} Dr\'apal partially supported by INTER-EXCELLENCE project LTAUSA19070 M\v{S}MT Czech Republic. Petr Vojt\v{e}chovsk\'y partially supported by the 2019 PROF grant of the University of Denver.}

\maketitle

\section{Introduction}

It is natural to ask whether an equation holds in a given algebra $X$ if it is satisfied on a given generating subset of $X$. The following definition formalizes this idea under the restriction that the equation involves all elements of the generating subset and it is satisfied for at least one ordering of the generators.

\begin{definition}
Let $X$ be an algebra and $\eqsymb$ an equation with $n$ variables in the signature of $X$. Then $\eqsymb$ \emph{propagates in $X$} if the implication
\begin{equation}\label{Eq:P}
    \eqsymb(x_1,\dots,x_n) \Longrightarrow ( \eqsymb(y_1,\dots,y_n)\text{ for all }y_1,\dots,y_n\in\langle x_1,\dots,x_n\rangle )
\end{equation}
holds for all $x_1,\dots,x_n\in X$. Here, $\langle x_1,\dots,x_n\rangle$ denotes the subalgebra of $X$ generated by $x_1,\dots,x_n$. An equation $\eqsymb$ \emph{propagates in a class} $\mathcal X$ of algebras if it propagates in every $X\in\mathcal X$.
\end{definition}

Some results in abstract algebra can be restated in the language of propagating equations. For instance, commutativity propagates in the variety of groups, i.e., if $xy=yx$ for some elements $x,y$ of a group, then the subgroup generated by $x$ and $y$ is commutative. The celebrated Moufang Theorem \cite{Moufang} (see \cite{Drapal} for a short proof) says that associativity propagates in the variety $\mathcal M$ of Moufang loops, i.e., if $x(yz)=(xy)z$ for some elements $x,y,z$ of a Moufang loop, then the subloop generated by $x,y,z$ is associative.

In the recent paper \cite{DV} we answered in affirmative a question of Rajah by exhibiting a variety of loops not contained in $\mathcal M$ in which associativity propagates, namely the variety of Steiner loops satisfying the identity $(xz)(((xy)z)(yz)) = ((xz)((xy)z))(yz)$. Our solution to Rajah's problem was intentionally elementary, however, our understanding of propagation of associativity in loops remains limited, not to mention propagation of general equations in universal algebras. Typical questions that come to mind are:
\begin{itemize}
\item If an equation $\eqsymb$ propagates in an algebra $X$, under which assumptions does $\eqsymb$ propagate in the variety $\mathbf{HSP}(X)$ generated by $X$?
\item If $\eqsymb$ propagates in the varieties $\mathcal V_1$ and $\mathcal V_2$, under which assumptions does $\eqsymb$ propagate in the join $\mathcal V_1\lor\mathcal V_2$?
\item Given a variety $\mathcal V$ and an equation $\eqsymb$, what is the largest variety $\mathcal W$ contained in $\mathcal V$ in which $\eqsymb$ propagates, if it exists?
\end{itemize}

In this paper we make a few initial observations about propagation of equations and then focus heavily on the special case of associativity in loops, particularly on propagation in $\mathbf{HSP}(X)$ for a given loop $X$. After introducing notation and terminology, we show that the class of algebras in which a given equation propagates is a quasivariety but not always a variety; it is therefore closed under subalgebras and products but not always under homomorphic images. In Section \ref{Sc:Basic} we derive basic properties of subdirect products of loops. In Section \ref{Sc:Lifted} we prove Goursat's Lemma for loops, showing that subdirect products of $X\times Y$ are precisely lifted graphs of isomorphisms. We also characterize normal subloops and normal subdirect products of $X\times Y$ and we describe a class of subdirect products of $X^k$. In the short Section \ref{Sc:SN} we show that the above class accounts for all subdirect products of $X^k$ as long as $X$ is a nonabelian simple loop and we describe all normal subloops of $X^k\times Y$ for an arbitrary loop $Y$. In Section \ref{Sc:Varieties} we focus on the variety $\mathbf{HSP}(X)$ generated by a single nonabelian simple loop $X$ and we describe all finitely generated algebras of $\mathbf{HSP}(X)$. Returning to propagation of equations, we prove our main result:

\emph{Suppose that an equation $\eqsymb$ propagates in a variety of loops $\mathcal V$ and in finite loops $X_1,\dots,X_n$. If every subloop $Y\le X_i$ is either in $\mathcal V$ or is nonabelian and simple, then $\eqsymb$ propagates in $\mathbf{HSP}(X_1,\dots,X_n)\lor\mathcal V$}.

Finally, in Section \ref{Sc:Steiner} we study the class $\mathcal S_{[x(yz)=(xy)z]}$ of Steiner loops  in which associativity propagates. We show that $\mathcal S_{[x(yz)=(xy)z]}$ is not a variety and we construct several varieties contained in $\mathcal S_{[x(yz)=(xy)z]}$ but not in the variety of abelian groups. Generalizing a result of Stuhl \cite{S}, we prove that an oriented anti-Pasch Steiner loop belongs to $\mathcal S_{[x(yz)=(xy)z]}$ if and only if it has exponent $4$.

We anticipate that some of our results for the variety of loops can be generalized to larger varieties. For instance, Goursat's Lemma \cite{G}, suitably interpreted, is known to hold in any Mal'cev variety \cite{CKP}.

\subsection{Notation and terminology}

See \cite{Bruck} for an introduction to loop theory, \cite{BS} for universal algebra and \cite{ColbournRosa} for triple systems.

A class of algebras with the same signature is a \emph{variety} if it is equationally defined. Given a class $\mathcal X$ of algebras, let $\mathbf H(\mathcal X)$ (resp. $\mathbf S(\mathcal X)$,
$\mathbf P(\mathcal X)$) be the class of all homomorphic images (resp. subalgebras, products) of algebras from $\mathcal X$. By Birkhoff Theorem, the smallest variety containing $\mathcal X$ is equal to $\mathbf{HSP}(\mathcal X)$.

\medskip

In a magma $(X,\cdot)$, let $L_x\colon X\to X$, $L_x(y)=xy$ and $R_x\colon X\to X$, $R_x(y)=yx$ denote the left and right translation by $x\in X$, respectively. An algebra $(X,\cdot,\ldiv,\rdiv)$ is a \emph{quasigroup} if the identities $x(x\ldiv y)=y=x\ldiv(xy)$, $(x\rdiv y)y = x = (xy)\rdiv y$ hold \cite{Evans}. A quasigroup $X$ is a \emph{loop} if there is $e\in X$ such that $ex=xe=x$ holds for all $x\in X$. Equivalently (but not from a universal-algebraic point of view, cf. \cite{BK}), $(X,\cdot,e)$ is a loop if all translations $L_x,R_x$ are permutations of $X$ and $ex=xe=e$ for all $x\in X$. Note that then $x\ldiv y = L_x^{-1}(y)$ and $x\rdiv y = R_y^{-1}(x)$.

For a nonempty subset $A$ of a loop $X$ we write $A\le X$ if $A$ is a subloop of $X$ and $A\unlhd X$ if $A$ is a normal subloop of $X$. A subloop $A$ of $X$ is \emph{proper} if $A\ne X$ and \emph{trivial} if $A=\{e\}$ or $A=X$. A loop $X$ is \emph{abelian} if it is an abelian group.

The \emph{inner mapping group} of a loop $X$ is the permutation group $\inn(X)$ generated by $L_{x,y} = L_{yx}^{-1}L_yL_x$, $R_{x,y} = R_{xy}^{-1}R_yR_x$ and $T_x=R_x^{-1}L_x$, where $x,y\in X$. The permutations $L_{x,y}$, $R_{x,y}$ and $T_x$ are known as the \emph{standard generators} of $\inn(X)$. For each of the standard generators $\psi$ of $\inn(X)$ there exists a loop term $t(x,y,z)$ such that $\psi(z)=t(x,y,z)$, where for $T_x$ we use $y$ as a dummy variable.
Each such term will be called an \emph{inner generating term}.

Note that a subloop $A$ of $X$ is normal in $X$ if and only if $t(x,y,z)\in A$ for every inner generating term $t$, every $x,y\in X$ and every $z\in A$. A loop $X$ is \emph{simple} if it has no nontrivial normal subloops.

An element $z\in X$ is \emph{central} if $t(x,y,z)=z$ for every inner generating term $t$ and every $x,y\in X$. The \emph{center} $Z(X)$ of $X$ consists of all central elements of $X$. A subloop $A\le X$ is \emph{central} if $A\le Z(X)$.

Given an abelian group $(Z,+,0)$, a loop $(F,\cdot,e)$ and a loop cocycle $f\colon F\times F\to Z$ satisfying $f(e,x)=f(x,e)=0$ for all $x\in F$, the \emph{central extension} $X=\mathrm{Ext}(Z,F,f)$ \emph{of $Z$ by $F$} is the loop $(Z\times F,*,(0,e))$ defined by $(a,x)*(b,y)=(a+b+f(x,y),xy)$. Note that $Z\times \{e\}\le Z(X)$ and $X/(Z\times \{e\})$ is isomorphic to $F$.

\medskip

A \emph{Steiner triple system of order $n$}, denoted by $STS(n)$, is a decomposition of the $n(n-1)/2$ edges of the complete graph $K_n$ into disjoint triangles, called blocks. There is a unique $STS(9)$ up to isomorphism. A \emph{Hall triple system} is a Steiner triple system in which any three elements that do not form a block generate a subsystem isomorphic to $STS(9)$. A Steiner triple system is \emph{anti-Pasch} if it contains no Pasch configurations. Note that every Hall triple system is anti-Pasch.

A \emph{Steiner quasigroup} is a commutative quasigroup satisfying the identities $xx=x$ and $x(xy)=y$. There is a one-to-one correspondence between Steiner triple systems and Steiner quasigroups defined on a set $X$: if $x\ne y$ then $x*y=z$ if and only if $\{x,y,z\}$ is a block  and $x*x=x$. A \emph{Steiner loop} is a commutative loop satisfying the identity $x(xy)=y$. There is a one-to-one correspondence between Steiner quasigroups and Steiner loops. If $(X,\cdot)$ is a Steiner quasigroup then $(X\cup\{e\},*)$ is a Steiner loop, where $x*e=e*x=x$ and $x*x=e$ for $x\in X\cup\{e\}$ and $x*y=xy$ if $x\ne y$ are elements of $X$. Conversely, if $(X\cup\{e\},*)$ is a Steiner loop then $(X,\cdot)$ is a Steiner quasigroup, where $xx=x$ and $xy=x*y$ whenever $x\ne y$.

\medskip

A mapping $\vhi\colon X\to Y$ between loops is a \emph{homomorphism} if $\vhi(xy)=\vhi(x)\vhi(y)$ for every $x,y\in X$. If $\vhi\colon X\to Y$ is a homomorphism then $\vhi(x\ldiv y) = \vhi(x)\ldiv\vhi(y)$ and
$\vhi(x\rdiv y) = \vhi(x)\rdiv\vhi(y)$ for every $x,y\in X$. We let $\img(\vhi)$ denote the image of $\vhi$ and $\Ker(\vhi)=\setof{x\in X}{\vhi(x)=e}$ the kernel of $\vhi$. Note that if $\vhi\colon X\to Y$ is a surjective loop homomorphism and $A\unlhd X$ then $f(A)\unlhd Y$ and $Y/f(A)$ is a homomorphic image of $X/A$.

\subsection{Basic properties of equation propagation}

For an equation $\eqsymb$ and a class of algebras $\mathcal X$ we call
\begin{displaymath}
    \mathcal X_{[\eqsymb]} = \setof{X\in\mathcal X}{\text{$\eqsymb$ propagates in $X$}}
\end{displaymath}
the \emph{propagating core of $X$ with respect to $\eqsymb$}.

\begin{theorem}\label{Th:Quasivariety}
Let $\eqsymb$ be an equation and $\mathcal X$ the class of all algebras in a signature $\Sigma$. Then the propagating core $\mathcal X_{[\eqsymb]}$ is a quasivariety.
\end{theorem}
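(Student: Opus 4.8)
The plan is to realize $\mathcal X_{[\eqsymb]}$ as the class of all models of an explicitly written set of quasi-identities, from which the conclusion is immediate. Write $\eqsymb$ as $u=v$, where $u$ and $v$ are $\Sigma$-terms in the variables $x_1,\dots,x_n$. The one nontrivial input is the standard universal-algebraic fact (see \cite{BS}) that the subalgebra $\langle x_1,\dots,x_n\rangle$ is exactly the set of values $t(x_1,\dots,x_n)$ as $t$ ranges over all $\Sigma$-terms in $n$ variables. Hence the condition $y_i\in\langle x_1,\dots,x_n\rangle$ occurring in \eqref{Eq:P} amounts to $y_i=t_i(x_1,\dots,x_n)$ for some $\Sigma$-term $t_i$, and the inner universal quantifier over the $y_i$ turns into a universal quantifier over tuples of terms.

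First I would fix a tuple $\overrightarrow t=(t_1,\dots,t_n)$ of $\Sigma$-terms in $x_1,\dots,x_n$ and form the sentence
\begin{displaymath}
    q_{\overrightarrow t}\colon\quad \forall x_1\cdots\forall x_n\,\bigl(\, u(\overrightarrow x)=v(\overrightarrow x)\ \Longrightarrow\ u(t_1(\overrightarrow x),\dots,t_n(\overrightarrow x))=v(t_1(\overrightarrow x),\dots,t_n(\overrightarrow x))\,\bigr).
\end{displaymath}
Since $q_{\overrightarrow t}$ has a single equational premise and a single equational conclusion, it is a genuine quasi-identity.

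Next I would verify, by unravelling the definition through the term translation above, that an algebra $X$ belongs to $\mathcal X_{[\eqsymb]}$ if and only if $X\models q_{\overrightarrow t}$ for every term tuple $\overrightarrow t$. Indeed, for fixed $x_1,\dots,x_n$ the implication in \eqref{Eq:P} asserts exactly that $\eqsymb(\overrightarrow x)$ forces $\eqsymb(t_1(\overrightarrow x),\dots,t_n(\overrightarrow x))$ for every choice of terms $t_i$ witnessing the elements $y_i=t_i(\overrightarrow x)$ of $\langle\overrightarrow x\rangle$; reordering the (commuting) universal quantifiers over $\overrightarrow x$ and over $\overrightarrow t$ yields the stated equivalence. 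Consequently $\mathcal X_{[\eqsymb]}=\mathrm{Mod}\,\Sigma_0$, where $\Sigma_0=\setof{q_{\overrightarrow t}}{\overrightarrow t\text{ a tuple of }\Sigma\text{-terms in }x_1,\dots,x_n}$, and a class axiomatized by a set of quasi-identities is by definition a quasivariety.

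The argument is largely bookkeeping, so I anticipate no serious obstacle; the only point needing care is the faithful passage between ``$y_i$ lies in the generated subalgebra'' and ``$y_i$ is a term value'', together with the observation that each substituted statement is a single-conclusion Horn sentence rather than something more complicated. Alternatively, one could bypass naming the axioms and instead check the Mal'cev closure conditions directly: closure of $\mathcal X_{[\eqsymb]}$ under isomorphisms, subalgebras and products is routine, while closure under ultraproducts follows because each $q_{\overrightarrow t}$ is universal Horn and hence preserved under ultraproducts by the \L o\'s theorem. Exhibiting the axioms explicitly is cleaner and is the route I would take.
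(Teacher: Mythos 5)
Your proof is correct and follows essentially the same route as the paper: both translate membership in $\langle x_1,\dots,x_n\rangle$ into term values and thereby rewrite the propagation implication \eqref{Eq:P} as the (infinite) set of quasi-identities indexed by tuples of $\Sigma$-terms. The paper's proof is just a terser version of yours; your extra remarks (writing $\eqsymb$ as $u=v$, noting each sentence is universal Horn) are harmless elaborations of the same idea.
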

\begin{proof}
Let $T$ be the class of all terms in $\Sigma$ and note that $y\in\langle x_1,\dots,x_n\rangle$ if and only if $y=u(x_1,\dots,x_n)$ for some $u\in T$. The implication \eqref{Eq:P} is therefore equivalent to the collection of implications
\begin{displaymath}
    \eqsymb(x_1,\dots,x_n) \Longrightarrow \eqsymb(u_1(x_1,\dots,x_n),\dots,u_n(x_1,\dots,x_n)),
\end{displaymath}
where $u_1,\dots,u_n$ range over $T$.
\end{proof}

\begin{corollary}\label{Cr:HS-propagation}
Let $\eqsymb$ be an equation that propagates in the class $\mathcal X$ of algebras. Then $\eqsymb$ propagates in $\mathbf S(\mathcal X)$ and in $\mathbf P(\mathcal X)$.
\end{corollary}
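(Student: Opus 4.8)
The plan is to obtain the statement as an immediate consequence of \tref{Th:Quasivariety}. Let $\mathcal A$ denote the class of all algebras in the signature $\Sigma$. The hypothesis that $\eqsymb$ propagates in $\mathcal X$ says exactly that $\mathcal X\subseteq\mathcal A_{[\eqsymb]}$, and by \tref{Th:Quasivariety} the core $\mathcal A_{[\eqsymb]}$ is a quasivariety. Since every quasivariety is closed under subalgebras and products, we get $\mathbf S(\mathcal X)\subseteq\mathbf S(\mathcal A_{[\eqsymb]})\subseteq\mathcal A_{[\eqsymb]}$ and $\mathbf P(\mathcal X)\subseteq\mathbf P(\mathcal A_{[\eqsymb]})\subseteq\mathcal A_{[\eqsymb]}$, which is precisely the assertion that $\eqsymb$ propagates in $\mathbf S(\mathcal X)$ and in $\mathbf P(\mathcal X)$.

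Should a self-contained verification be preferred, I would check the two closure operators directly, using the term description of generated subalgebras from the proof of \tref{Th:Quasivariety}. For a subalgebra $Y\le X$ with $X\in\mathcal X$, the point is that for $x_1,\dots,x_n\in Y$ the subalgebra $\langle x_1,\dots,x_n\rangle$ is the same whether formed in $Y$ or in $X$, because $Y$ is already closed under all operations of $\Sigma$; propagation of $\eqsymb$ in $X$ then transfers without change to $Y$. For a product $X=\prod_{i\in I}X_i$ with each $X_i\in\mathcal X$, I would use that both satisfaction of $\eqsymb$ and membership in a generated subalgebra are decided coordinatewise: if $\eqsymb(a_1,\dots,a_n)$ holds in $X$ then it holds in every $X_i$; any $b_k=u_k(a_1,\dots,a_n)$ projects in coordinate $i$ to $u_k$ applied to the $i$-th components, hence lies in the subalgebra of $X_i$ generated by those components; propagation in each $X_i$ gives $\eqsymb$ on the $b_k$ in every coordinate, and therefore in $X$.

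I do not anticipate any genuine obstacle: the closure of quasi-identities under $\mathbf S$ and $\mathbf P$ is exactly what \tref{Th:Quasivariety} packages, so the first route is essentially one line. In the direct route the only step needing a word of care is the coordinatewise evaluation of terms in a product, i.e.\ that a term computed in $\prod_{i\in I}X_i$ is obtained by computing it in each factor; this is immediate from the definition of the product algebra and requires no calculation.
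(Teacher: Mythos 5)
Your primary route is exactly the paper's own proof: both invoke \tref{Th:Quasivariety} together with the closure of quasivarieties under $\mathbf S$ and $\mathbf P$, and your explicit chain $\mathbf S(\mathcal X)\subseteq\mathbf S(\mathcal A_{[\eqsymb]})\subseteq\mathcal A_{[\eqsymb]}$ just spells out what the paper leaves implicit. The direct coordinatewise verification you sketch as a backup is fine but unnecessary; the one-line argument is complete and correct.
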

\begin{proof}
This follows from Theorem \ref{Th:Quasivariety} and from the fact that quasivarieties are closed under subalgebras and products, cf. \cite[Theorem V.2.25]{BS}.
\end{proof}

As an immediate consequence of Corollary \ref{Cr:HS-propagation} we obtain:

\begin{lemma}\label{Lm:PropFinGen}
An equation $\eqsymb$ propagates in an algebra $X$ if and only if it propagates in every finitely generated subalgebra of $X$.
\end{lemma}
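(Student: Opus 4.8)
The plan is to treat the two implications separately; the forward direction is essentially free from the machinery already in place, while the converse is a short finitary argument that isolates the real content of the lemma.

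For the forward implication, suppose $\eqsymb$ propagates in $X$. Then it propagates in the singleton class $\{X\}$, so \cref{Cr:HS-propagation} yields propagation in $\mathbf{S}(\{X\})$. Since every finitely generated subalgebra of $X$ is a member of $\mathbf{S}(\{X\})$, the equation $\eqsymb$ propagates in each of them, as required.

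For the converse I would unfold the definition of propagation and exploit that each instance of the implication \eqref{Eq:P} refers to only finitely many elements. Assume $\eqsymb$ propagates in every finitely generated subalgebra of $X$, and fix $x_1,\dots,x_n\in X$ for which $\eqsymb(x_1,\dots,x_n)$ holds. Set $A=\langle x_1,\dots,x_n\rangle$; this is a finitely generated subalgebra of $X$, so by hypothesis $\eqsymb$ propagates in $A$. The point to verify is that subalgebra generation is absolute: the subalgebra of $A$ generated by $x_1,\dots,x_n$ coincides with the subalgebra of $X$ they generate, namely $A$ itself, because $\langle x_1,\dots,x_n\rangle$ is the smallest subalgebra of $X$ containing the $x_i$ and is already contained in $A$. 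Applying \eqref{Eq:P} inside $A$ to the tuple $(x_1,\dots,x_n)$ then produces $\eqsymb(y_1,\dots,y_n)$ for all $y_1,\dots,y_n\in A=\langle x_1,\dots,x_n\rangle$, which is precisely the conclusion of \eqref{Eq:P} for $X$; hence $\eqsymb$ propagates in $X$.

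I do not expect a genuine obstacle here: the whole content is the observation that both the hypothesis and the conclusion of \eqref{Eq:P} live entirely inside the finitely generated subalgebra $\langle x_1,\dots,x_n\rangle$, where propagation is assumed. The only step requiring a word of care is the absoluteness of subalgebra generation, which is routine universal algebra.
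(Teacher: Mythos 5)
Your proposal is correct and takes essentially the same route as the paper: the paper derives this lemma as an immediate consequence of Corollary~\ref{Cr:HS-propagation} (which is exactly your forward direction), and your converse argument---that both hypothesis and conclusion of \eqref{Eq:P} live inside the finitely generated subalgebra $\langle x_1,\dots,x_n\rangle$, where generation is absolute---is precisely the definitional observation the paper leaves implicit.
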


\begin{proposition}
Let $\eqsymb$ be an equation and $\mathcal V$ a variety. Then the propagating core $\mathcal V_{[\eqsymb]}$ is a variety if and only if $\mathbf{H}(\mathcal V_{[\eqsymb]})\subseteq\mathcal V_{[\eqsymb]}$.
\end{proposition}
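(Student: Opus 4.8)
The plan is to reduce the claim to the Birkhoff variety theorem (recalled in the notation subsection as $\mathbf{HSP}(\mathcal X)$ being the smallest variety containing $\mathcal X$), according to which a class of algebras is a variety if and only if it is closed under $\mathbf H$, $\mathbf S$ and $\mathbf P$. The key observation is that, regardless of $\eqsymb$ and $\mathcal V$, the propagating core $\mathcal V_{[\eqsymb]}$ is automatically closed under $\mathbf S$ and $\mathbf P$. Consequently, being a variety is equivalent to the single remaining closure condition, closure under $\mathbf H$, which is exactly the hypothesis $\mathbf H(\mathcal V_{[\eqsymb]})\subseteq\mathcal V_{[\eqsymb]}$.

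First I would verify the closure under $\mathbf S$ and $\mathbf P$. By definition $\eqsymb$ propagates in $\mathcal V_{[\eqsymb]}$, so by \cref{Cr:HS-propagation} it propagates in $\mathbf S(\mathcal V_{[\eqsymb]})$ and in $\mathbf P(\mathcal V_{[\eqsymb]})$. Since $\mathcal V$ is a variety and $\mathcal V_{[\eqsymb]}\subseteq\mathcal V$, both $\mathbf S(\mathcal V_{[\eqsymb]})$ and $\mathbf P(\mathcal V_{[\eqsymb]})$ are contained in $\mathcal V$; combined with the propagation just noted, they are contained in $\mathcal V_{[\eqsymb]}$. This is the step where the hypothesis that $\mathcal V$ is a variety (and not merely an arbitrary class) is genuinely used: it guarantees that the subalgebras and products produced by $\mathbf S$ and $\mathbf P$ do not leave $\mathcal V$, and hence fall back into $\mathcal V_{[\eqsymb]}$.

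With closure under $\mathbf S$ and $\mathbf P$ established, the two implications follow at once. For the forward direction, if $\mathcal V_{[\eqsymb]}$ is a variety then by Birkhoff it is closed under $\mathbf H$, which is precisely $\mathbf H(\mathcal V_{[\eqsymb]})\subseteq\mathcal V_{[\eqsymb]}$. For the converse, assuming $\mathbf H(\mathcal V_{[\eqsymb]})\subseteq\mathcal V_{[\eqsymb]}$, the class $\mathcal V_{[\eqsymb]}$ is closed under all three operators $\mathbf H$, $\mathbf S$ and $\mathbf P$, hence is a variety by Birkhoff.

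I do not anticipate any serious obstacle: the whole content is the bookkeeping observation that propagation is preserved by $\mathbf S$ and $\mathbf P$ (already recorded in \cref{Cr:HS-propagation}) together with the Birkhoff theorem. The only point requiring care is the verification that $\mathbf S(\mathcal V_{[\eqsymb]})$ and $\mathbf P(\mathcal V_{[\eqsymb]})$ remain inside $\mathcal V$, which rests solely on $\mathcal V$ being closed under these operators.
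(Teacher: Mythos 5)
Your proof is correct and is exactly the argument the paper intends: the paper leaves this proposition without an explicit proof, since it follows immediately from Corollary~\ref{Cr:HS-propagation} (closure of the propagating core under $\mathbf S$ and $\mathbf P$, staying inside $\mathcal V$ because $\mathcal V$ is a variety) together with Birkhoff's HSP theorem, which is precisely your reduction. Nothing is missing.
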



The following example exhibits an algebra $X$ and an equation $\eqsymb$ that propagates in $X$ but not in $\mathbf{H}(X)$.

\begin{example}\label{Ex:xxx}
Consider the loop equation
\begin{equation}\label{Eq:Exp3}
    (xx)x=e
\end{equation}
and the loop $(F,\cdot,e)$ with multiplication table
\begin{displaymath}
    \begin{array}{c|ccccc}
        F&e&a&b&c&d\\
        \hline
        e&e&a&b&c&d\\
        a&a&b&d&e&c\\
        b&b&e&c&d&a\\
        c&c&d&a&b&e\\
        d&d&c&e&a&b
    \end{array}
\end{displaymath}
Then \eqref{Eq:Exp3} does not propagate in $F$ because $(aa)a = ba = e$, $b=aa\in \langle a\rangle$, but $(bb)b = cb = a\ne e$.

Let $X=\mathrm{Ext}(\mathbb Z_3,F,f) = (\mathbb Z_3\times F,*,(0,e))$, where
\begin{displaymath}
    f(x,y) = \left\{\begin{array}{ll}
        0,&\text{ if $x=e$ or $y=e$ or $x\ne y$},\\
        1,&\text{ if $x=y\ne e$}.
    \end{array}\right.
\end{displaymath}
Let $Z=\mathbb Z_3\times \{e\}\unlhd X$ and note that $X/Z$ is isomorphic to $F$. We claim that \eqref{Eq:Exp3} propagates in $X$. Consider $(r,x)\in X$. If $x\ne e$ then $((r,x)*(r,x))*(r,x) = (2r+1,x^2)*(r,x) = (1,x^2x)\ne (0,e)$, where we have used $x^2\ne x$. If $x=e$ then $(r,x)=(r,e)$ generates a subgroup of $Z\cong \mathbb Z_3$ in which \eqref{Eq:Exp3} certainly holds.
\end{example}

Example \ref{Ex:AssocDoesNotPropagate} will furnish a finite Steiner loop $X$ such that associativity propagates in $X$ but not in $\mathbf{H}(X)$.

\begin{example}\label{Ex:Ring}
Let $\mathcal R$ be the variety of unital commutative rings. We claim that $R\in\mathcal R_{[x^2=x]}$ if and only if $\mathrm{char}(R)=2$. Indeed, if idempotence propagates in $R$ then, since $1^2=1$, we must have $(1+1)^2=1+1$ and thus $\mathrm{char}(R)=2$. Conversely, suppose that $R\in\mathcal R$, $\mathrm{char}(R)=2$ and $r\in R$ satisfies $r^2=r$. Note that $\langle r\rangle = \setof{f(r)}{f\in\mathbb Z[x]} \subseteq \{0,1,r,1+r\}$. Then $u^2=u$ for every $u\in\langle r\rangle$. Note that $\mathcal R_{[x^2=x]}$ properly contains the variety $\setof{R\in\mathcal R}{x^2=x\text{ holds in }R}$ of unital boolean rings.
\end{example}

Summarizing, the propagating core $\mathcal V_{[\eqsymb]}$ of a variety $\mathcal V$ is a quasivariety but it need not be a variety, cf. Example \ref{Ex:xxx}. When $\mathcal V_{[\eqsymb]}$ is a variety, it might be properly contained between the varieties $\mathcal V$ and $\setof{X\in\mathcal V}{\eqsymb\text{ holds in }X}$, cf. Example \ref{Ex:Ring}.

\section{Basic properties of subdirect products in loops}\label{Sc:Basic}

Let $I$ be an index set, $X_i$ a loop for every $i\in I$ and $X=\prod_{i\in I}X_i$. The identity element of every $X_i$ will be denoted by $e$; it will be clear from the context to which loop the identity element $e$ belongs. Given $x=(x_i)=(x_i)_{i\in I}\in X$, we denote by $\supp{x}=\setof{i\in I}{x_i\ne e}$ the support of $x$. For $J\subseteq I$, let $p_J:X\to \prod_{i\in J}X_i$ be the canonical projection defined by $p_J((x_i)_{i\in I}) = (x_i)_{i\in J}$ and let $e_J:\prod_{i\in J}X_i\to X$ be the canonical embedding defined by $e_J((x_i)_{i\in J}) = (y_i)_{i\in I}$, where $y_i=x_i$ if $i\in J$ and $y_i=e$ otherwise. For $A\subseteq X$ and $J\subseteq I$, let
\begin{displaymath}
    A_J=p_J(A)\qquad\text{and}\qquad A[J] = \setof{x\in A}{\supp{x}\subseteq J}.
\end{displaymath}
If $A\le X$ then $A_J\le X_J$, $A[J]\le A$ and $A[J]_J = p_J(A[J])\le A_J$. When $J=\{i\}$ is a singleton, we write $p_i$, $e_i$, $A_i$ and $A[i]$ instead of $p_J$, $e_J$, $A_J$ and $A[J]$, respectively.

We say that $A\subseteq \prod_{i\in I}X_i$ is \emph{flat} if $A[i]_i=\{e\}$ for every $i\in I$.

A subloop $A\le \prod_{i\in I}X_i$ is a \emph{subdirect product} of $\prod_{i\in I}X_i$ if $A_i=X_i$ for every $i\in I$, in which case we write $A\lesd \prod_{i\in I}X_i$. Note that the factorization of $X=\prod_{i\in I}X_i$ matters in the definition of subdirect product. For instance, $A=\setof{(x,x,x)}{x\in\mathbb R}$ is a subdirect product of $\mathbb R\times\mathbb R\times\mathbb R$ but not a subdirect product of $(\mathbb R\times\mathbb R)\times\mathbb R$.

\begin{lemma}\label{partition}
Let $A\lesd \prod_{i\in I} X_i$ and let $\setof{I_k}{k\in K}$ be a partition of $I$. Then $A\lesd\prod_{k\in K}A_{I_k}$ and $A_{I_k}\lesd \prod_{i\in I_k}X_i$ for every $k\in K$.
\end{lemma}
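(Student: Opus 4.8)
The plan is to reduce everything to bookkeeping with the canonical projections, using only the fact that $\setof{I_k}{k\in K}$ partitions $I$. The key structural observation is that, since the blocks $I_k$ are pairwise disjoint and cover $I$, the product $\prod_{i\in I}X_i$ is canonically identified with $\prod_{k\in K}\bigl(\prod_{i\in I_k}X_i\bigr)$, and under this identification the projection onto the $k$-th outer factor is precisely $p_{I_k}$. Two compatibility facts will be used repeatedly: $p_J=p_J\circ p_{J'}$ whenever $J\subseteq J'$ (restricting to a sub-block of coordinates may be done in one step or in two), and in particular $p_i=p_i\circ p_{I_k}$ for every $i\in I_k$.

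First I would establish $A\lesd\prod_{k\in K}A_{I_k}$. That $A_{I_k}=p_{I_k}(A)\le\prod_{i\in I_k}X_i$ is already recorded in the paragraph preceding the lemma (projections are homomorphisms, so $A_J\le X_J$). Given $a\in A$, its image under the identification above is $(p_{I_k}(a))_{k\in K}$, and each coordinate $p_{I_k}(a)$ lies in $p_{I_k}(A)=A_{I_k}$; hence $A\subseteq\prod_{k\in K}A_{I_k}$, and since both are subloops of the reindexed product, $A\le\prod_{k\in K}A_{I_k}$. Subdirectness is then immediate: the projection of $A$ onto the $k$-th factor of $\prod_{k\in K}A_{I_k}$ is $p_{I_k}(A)$, which equals $A_{I_k}$ by definition.

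Next I would prove $A_{I_k}\lesd\prod_{i\in I_k}X_i$ for each fixed $k$. We already have $A_{I_k}\le\prod_{i\in I_k}X_i$, so only the subdirectness condition $(A_{I_k})_i=X_i$ for every $i\in I_k$ needs checking. For such $i$ the compatibility relation gives $(A_{I_k})_i=p_i(A_{I_k})=p_i(p_{I_k}(A))=p_i(A)=A_i$, and $A_i=X_i$ because $A$ is a subdirect product of $\prod_{i\in I}X_i$. This settles both parts.

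I do not expect any genuine obstacle here; the content is purely the verification that projections compose correctly across a partition, together with the defining properties of subdirect products. The only point requiring a little care is the tacit but harmless identification of $\prod_{i\in I}X_i$ with $\prod_{k\in K}\prod_{i\in I_k}X_i$, which is exactly what makes the refactoring meaningful and which relies on $\setof{I_k}{k\in K}$ being a genuine partition rather than an arbitrary family of subsets.
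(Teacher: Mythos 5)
Your proof is correct and follows essentially the same route as the paper's: both parts reduce to the observation that $A_{I_k}=p_{I_k}(A)$ by definition and that $p_i = p_i\circ p_{I_k}$ for $i\in I_k$, so subdirectness of $A$ transfers to $A_{I_k}$. The paper's version is just terser, leaving the canonical identification of $\prod_{i\in I}X_i$ with $\prod_{k\in K}\prod_{i\in I_k}X_i$ implicit, which you rightly make explicit.
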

\begin{proof}
Let $k\in K$. If $a_{I_k}\in A_{I_k}$ then there is $a\in A$ such that $p_{I_k}(a)=a_{I_k}$. This shows that $A\lesd\prod_{k\in K}A_{I_k}$. If $i\in I_k$ and $x_i\in X_i$ then there is $a\in A$ such that $p_i(a)=x_i$. But then also $p_i(p_{I_k}(a))=x_i$ and thus $A_{I_k}\lesd\prod_{i\in I_k}X_i$.
\end{proof}

\begin{lemma}\label{subloop}
Let $I$ be a finite index set and $A\le \prod_{i\in I}X_i$. Then $\prod_{i\in I}A[i]_i\le A$.
\end{lemma}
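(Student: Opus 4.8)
The plan is to reduce the claim to a set-theoretic containment and then exploit the fact that elements supported on a single coordinate already belong to $A$. Note first that $A[i]_i\le X_i$ for every $i$, since $A[i]\le A$ and $A[i]_i=p_i(A[i])\le A_i\le X_i$ by the remarks preceding the lemma. Hence $\prod_{i\in I}A[i]_i$ is a product of subloops and is therefore itself a subloop of $\prod_{i\in I}X_i$. It thus suffices to prove the inclusion $\prod_{i\in I}A[i]_i\subseteq A$: a subloop of the ambient product that happens to be contained in the subloop $A$ is automatically a subloop of $A$.

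The key building block is the observation that $e_i(a_i)\in A$ for every $i\in I$ and every $a_i\in A[i]_i$. Indeed, by definition $a_i\in A[i]_i=p_i(A[i])$, so there is some $x\in A[i]\le A$ with $p_i(x)=a_i$; since $\supp{x}\subseteq\{i\}$, this $x$ has $e$ in all coordinates except the $i$-th, i.e.\ $x=e_i(a_i)$, and thus $e_i(a_i)\in A$.

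To assemble an arbitrary element $(a_i)_{i\in I}\in\prod_{i\in I}A[i]_i$, I would use the finiteness of $I$, writing $I=\{i_1,\dots,i_n\}$, and form the left-to-right product
\begin{displaymath}
    w=\bigl(\cdots\bigl((e_{i_1}(a_{i_1})\cdot e_{i_2}(a_{i_2}))\cdot e_{i_3}(a_{i_3})\bigr)\cdots\bigr)\cdot e_{i_n}(a_{i_n}),
\end{displaymath}
which lies in $A$ because each factor does and $A$ is closed under multiplication. The one point demanding care—and the only real subtlety, since loops need not be associative—is verifying that $w=(a_i)_{i\in I}$ regardless of the forced bracketing. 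This is immediate coordinatewise: multiplication in $\prod_{i\in I}X_i$ is componentwise, and in the $j$-th coordinate every factor $e_{i_k}(a_{i_k})$ contributes $e$ except the single factor with $i_k=j$, which contributes $a_j$; as $e$ is a two-sided identity in $X_j$, any bracketing of these components collapses to $a_j$. Therefore $w=(a_i)_{i\in I}\in A$, and since the tuple was arbitrary we conclude $\prod_{i\in I}A[i]_i\subseteq A$, which completes the argument. The non-associativity obstacle is thus neutralized precisely because the supports of the factors are pairwise disjoint.
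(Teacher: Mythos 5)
Your proof is correct and takes essentially the same route as the paper's: both decompose an arbitrary tuple into the single-support factors $e_i(a_i)\in A[i]\le A$ and assemble them by multiplication using the finiteness of $I$. The only difference is that you make explicit the coordinatewise check that the bracketing of the product is irrelevant, a point the paper leaves implicit in writing $x=e_1(x_1)\cdots e_n(x_n)\in A[1]\cdots A[n]\le A$.
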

\begin{proof}
We have $A[i]_i\le A_i\le X_i$ and thus $\prod_{i\in I}A[i]_i\le X$. Let $x=(x_i)\in \prod_{i\in I}A[i]_i$. Then $x_i\in A[i]_i$ and $e_i(x_i)\in A[i]\le A$. If $I=\{1,\dots,n\}$, we conclude that $x=e_1(x_1)\cdots e_n(x_n)\in A[1]\cdots A[n]\le A$.
\end{proof}

\begin{example}
The conclusion of Lemma \ref{subloop} does not necessarily hold when $I$ is infinite. For every $i<\omega$ let $X_i=\mathbb Z_2$ and let $A$ be the subgroup of $X=\prod_{i<\omega}X_i$ consisting of all sequences with finite support. Then $A[i]_i=\mathbb Z_2$ for every $i$ but $\prod_{i\in I}A[i]_i=X$ is not contained in $A$.
\end{example}

\begin{lemma}\label{support}
Let $X= \prod_{i\in I}X_i$ and $J\subseteq I$. If $A\le X$ then $A[J]\unlhd A$ and $A[J]_J\unlhd A_J$. If $A\unlhd X$ then $A[J]\unlhd X$ and $A[J]_J\unlhd X_J$.
\end{lemma}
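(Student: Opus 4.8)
The plan is to reduce all four assertions to the normality criterion recalled in the introduction—$B\le L$ is normal in $L$ precisely when $t(x,y,z)\in B$ for every inner generating term $t$, all $x,y\in L$ and all $z\in B$—together with a single observation about supports. The underlying subloops are already available: $A[J]\le A$ and $A[J]_J\le A_J$ were noted above, and $A[J]\le X$, $A[J]_J\le X_J$ follow by transitivity, so in each case only normality remains at issue.

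The key step is the following support bound. Each standard generator $L_{x,y}$, $R_{x,y}$, $T_x$ of the inner mapping group fixes $e$, so every inner generating term satisfies $t(x,y,e)=e$. Since the loop operations, and hence every term, are evaluated coordinatewise in $X=\prod_{i\in I}X_i$, we have $t(x,y,z)_i=t(x_i,y_i,z_i)$, and combining these two facts yields
\[
    \supp{t(x,y,z)}\subseteq\supp{z}\qquad\text{for every inner generating term }t\text{ and all }x,y,z\in X.
\]
I would also use that $p_J$ is a loop homomorphism, so $p_J(t(x,y,z))=t(p_J(x),p_J(y),p_J(z))$ for every term $t$.

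For the statements under $A\le X$: to prove $A[J]\unlhd A$, take an inner generating term $t$, elements $x,y\in A$ and $z\in A[J]$. Then $t(x,y,z)\in A$ since $A$ is a subloop, while $\supp{t(x,y,z)}\subseteq\supp{z}\subseteq J$ by the support bound; hence $t(x,y,z)\in A[J]$. For $A[J]_J\unlhd A_J$ I would lift: given $\bar x,\bar y\in A_J$ and $\bar z\in A[J]_J$, choose preimages $x,y\in A$ and $z\in A[J]$ under $p_J$, apply the previous argument to get $t(x,y,z)\in A[J]$, and push forward through the homomorphism $p_J$ to obtain $t(\bar x,\bar y,\bar z)=p_J(t(x,y,z))\in A[J]_J$.

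The statements under $A\unlhd X$ follow from the identical template, the only change being that normality of $A$ in $X$ now yields $t(x,y,z)\in A$ for all $x,y\in X$ rather than merely for $x,y\in A$; so one runs the same two arguments with $x,y$ ranging over $X$ (resp.\ $X_J$). The only real content is the support bound $\supp{t(x,y,z)}\subseteq\supp{z}$, and I do not expect a genuine obstacle beyond it; everything else is bookkeeping with coordinatewise evaluation and with the homomorphism $p_J$. The one point meriting care is the lifting used for the projected statements: since every element of $A[J]$ vanishes off $J$, the restriction of $p_J$ to $A[J]$ is injective onto $A[J]_J$, which guarantees that the required preimage $z\in A[J]$ of $\bar z$ exists.
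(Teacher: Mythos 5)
Your proposal is correct and follows essentially the same route as the paper: the key observation $t(x,y,e)=e$ for inner generating terms, applied coordinatewise, gives $\supp{t(x,y,z)}\subseteq\supp{z}$, and intersecting with $A$ (resp.\ using $A\unlhd X$) yields the normality of $A[J]$; the paper then just cites that surjective homomorphisms preserve normality where you spell out the same fact via lifting along $p_J$. The only cosmetic difference is that your final "point meriting care" is unnecessary: the existence of a preimage $z\in A[J]$ of $\bar z\in A[J]_J$ holds by the very definition $A[J]_J=p_J(A[J])$, with no need to invoke injectivity of $p_J$ on $A[J]$.
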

\begin{proof}
Let $t$ be an inner generating term. For $x,y\in X$ and $i\in I$ we have $t(x_i,y_i,e)=e$, which implies $t(x,y,X[J]) \subseteq X[J]$. If $A\le X$ then $t(A,A,A[J])\subseteq A\cap X[J]=A[J]$, proving $A[J]\unlhd A$. Since epimorphisms preserve normality, $A[J]_J\unlhd A_J$ follows. If $A\unlhd X$ then $t(X,X,A[J])\subseteq A\cap X[J]=A[J]$, so $A[J]\unlhd X$ and $A[J]_J\unlhd X_J$.
\end{proof}

\begin{lemma}\label{technical}
Let $X=\prod_{i\in I}X_i$ and let $J$, $K$ be subsets of $I$ such that all of the following conditions are satisfied:
\begin{enumerate}
\item[$\bullet$] $J$, $K$ are finite,
\item[$\bullet$] $X_j$ is finite for every $j\in J$,
\item[$\bullet$] for every $i\in I\setminus K$ there is $j\in J$ such that $X_i$ is isomorphic to $X_j$.
\end{enumerate}
Then every finitely generated subloop of $X$ is isomorphic to a subloop of $X_L$ for some finite subset $L$ of $I$.
\end{lemma}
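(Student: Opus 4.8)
The plan is as follows. Fix an arbitrary finitely generated subloop $A=\langle g_1,\dots,g_m\rangle\le X$. It suffices to produce a \emph{finite} set $L\subseteq I$ for which the canonical projection $p_L$ restricts to an injective homomorphism on $A$: then $p_L|_A$ is an isomorphism onto $p_L(A)\le X_L$. Since a loop homomorphism is injective as soon as its kernel is trivial (because $\vhi(x\rdiv y)=\vhi(x)\rdiv\vhi(y)$), this amounts to choosing $L$ so large that the only $a\in A$ with $p_i(a)=e$ for all $i\in L$ is $a=e$; informally, every coordinate outside $L$ must be determined, on all of $A$, by a coordinate inside $L$.

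To build such an $L$ I would first throw the whole finite set $K$ into $L$ and then collapse the coordinates in $I\setminus K$ to finitely many representatives. For $i\in I\setminus K$ the third hypothesis provides $j(i)\in J$ with $X_i\cong X_{j(i)}$; fix an isomorphism $\sigma_i\colon X_i\to X_{j(i)}$ and record the normalized generator column
\[
\bar c_i=(\sigma_i(p_i(g_1)),\dots,\sigma_i(p_i(g_m)))\in X_{j(i)}^m .
\]
Declare $i,i'\in I\setminus K$ equivalent when $j(i)=j(i')$ and $\bar c_i=\bar c_{i'}$. Because $J$ is finite and each $X_j$ ($j\in J$) is finite, there are at most $\sum_{j\in J}|X_j|^m$ equivalence classes. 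Choosing one representative from each class and adjoining them to $K$ yields a finite set $L=K\cup R$.

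It remains to check injectivity of $p_L|_A$. Suppose $a\in A$ has $p_i(a)=e$ for every $i\in L$. For $i\in K\subseteq L$ this holds directly. For $i\in I\setminus K$, let $i'\in R$ be the representative of the class of $i$; then $\tau=\sigma_{i'}\m\sigma_i\colon X_i\to X_{i'}$ is an isomorphism sending $p_i(g_s)$ to $p_{i'}(g_s)$ for each $s$. Writing $a=w(g_1,\dots,g_m)$ for a loop term $w$ and using that both the projections and $\tau$ commute with evaluation of $w$, we get $\tau(p_i(a))=w(p_{i'}(g_1),\dots,p_{i'}(g_m))=p_{i'}(a)=e$, whence $p_i(a)=e$ because $\tau$ is injective. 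Thus $p_i(a)=e$ for all $i\in I$, i.e. $a=e$, and $p_L|_A$ is injective.

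The heart of the matter, and the only real obstacle, is keeping $L$ finite while still ``seeing'' all of $I$; this is exactly what forces us to use all three hypotheses at once — $K$ finite disposes of the genuinely exceptional coordinates, whereas $J$ finite together with finiteness of each $X_j$ ($j\in J$) bounds the number of normalized columns, hence the number of classes, among the remaining coordinates. The verification of injectivity is then routine once one notices that an isomorphism matching the generator columns of two coordinates automatically matches the value of every element of $A$, these values being common loop-term expressions in $g_1,\dots,g_m$.
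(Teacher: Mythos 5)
Your proof is correct and follows essentially the same route as the paper: both partition the coordinates into finitely many classes according to isomorphism type and the values of the generators (the paper's third bullet plus finiteness of $J$ and of each $X_j$ bounding the number of classes, with $K$ handled separately), then project onto a set of representatives. The only cosmetic difference is that the paper normalizes by assuming $X_i = X_j$ whenever they are isomorphic, whereas you carry the isomorphisms $\sigma_i$ explicitly and spell out the injectivity of $p_L|_A$, which the paper leaves implicit.
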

\begin{proof}
Without loss of generality, we can assume that $X_i=X_j$ whenever $X_i$ is isomorphic to $X_j$. Let $A=\genof{a_\ell=(a_{\ell,i})}{1\le \ell\le n}$ be a finitely generated subloop of $X$. Define an equivalence relation $\sim$ on $I$ by setting $i\sim j$ if and only if $X_i=X_j$ and $a_{\ell,i}=a_{\ell,j}$ for every $1\le\ell\le n$. Since $J$, $K$ are finite and every $X_j$ with $j\in J$ is finite, $\sim$ has only finitely many equivalence classes. Let $L$ be a complete set of representatives of the equivalence classes of $\sim$. Then $p_L(A)$ is isomorphic to $A$.
\end{proof}

For $K\unlhd X$, denote by $\pi_{X/K}$ the canonical projection $X\to X/K$. A straightforward application of the Correspondence Theorem for loops yields:

\begin{proposition}\label{correspondence}
Let $X=\prod_{i\in I}{X_i}$ and $K=\prod_{i\in I}K_i$, where $K_i\unlhd X_i$ for every $i\in I$. The mapping $\pi:X\to \prod_{i\in I}X_i/K_i$ defined by $\pi((x_i)) = (x_iK_i)$ induces a lattice isomorphism between all subloops $A\le X$ containing $K$ and all subloops $B\le\prod_{i\in I}X_i/K_i$. Moreover, when $A$, $B$ are such subloops, then:
\begin{enumerate}
\item[(i)] $\pi^{-1}(B) = \setof{(x_i)}{(x_iK_i)\in B}$;
\item[(ii)] $A\unlhd X$ if and only if $\pi(A)\unlhd \prod_{i\in I}X_i/K_i$;
\item[(iii)] $A\lesd X$ if and only if $\pi(A)\lesd\prod_{i\in I}X_i/K_i$;
\item[(iv)] $\pi(A)[J] = \pi(A[J])$, $K_J\le A[J]_J$ and $(\pi(A)[J])_J = A[J]_J/K_J$ for every $J\subseteq I$.
\end{enumerate}
\end{proposition}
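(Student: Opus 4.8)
The plan is to realize $\pi$ as the quotient map of $X$ by $K$ followed by the canonical identification $\prod_{i\in I}X_i/K_i\cong X/K$, and then to read off (i)--(iv) from the Correspondence Theorem for loops together with a small amount of bookkeeping about supports.

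First I would verify that $K=\prod_{i\in I}K_i$ is normal in $X$ and that $\pi$ is a surjective homomorphism with $\Ker(\pi)=K$. Normality is immediate from the componentwise action of inner generating terms: if $t$ is such a term and $z\in K$, then $t(x,y,z)_i=t(x_i,y_i,z_i)\in K_i$ for every $i$ because $K_i\unlhd X_i$, so $t(x,y,z)\in K$; this is exactly the inner generating term criterion for normality. That $\pi$ is a surjective homomorphism is clear coordinatewise, and $\pi((x_i))=(K_i)$ holds if and only if $x_i\in K_i$ for every $i$, i.e. if and only if $(x_i)\in K$. Hence $\prod_{i\in I}X_i/K_i$ is canonically isomorphic to $X/K$ and $\pi$ is the corresponding quotient map.

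With this in place, the lattice isomorphism between subloops $A\le X$ containing $K$ and subloops $B$ of the quotient, via $A\mapsto\pi(A)$ and $B\mapsto\pi^{-1}(B)$, is precisely the Correspondence Theorem for loops; it delivers (i) (the displayed preimage formula is just the definition of $\pi^{-1}(B)$) and (ii) (the Correspondence Theorem preserves normality in both directions). For (iii) I would argue one factor at a time. Since $p_i\circ\pi$ agrees with the quotient $X_i\to X_i/K_i$ followed by $p_i$, and since $A\supseteq K$ gives $A_i\supseteq K_i$, we have $\pi(A)_i=A_i/K_i$. Thus $A_i=X_i$ forces $\pi(A)_i=X_i/K_i$; conversely, a subloop $A_i$ of $X_i$ containing $K_i$ whose image is all of $X_i/K_i$ must equal $X_i$, again by the Correspondence Theorem applied to the single factor $X_i$.

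The real work is (iv), and the delicate point is the equality $\pi(A)[J]=\pi(A[J])$. The inclusion $\pi(A[J])\subseteq\pi(A)[J]$ is routine, since $\supp{x}\subseteq J$ gives $\supp{\pi(x)}\subseteq J$. For the reverse inclusion, given $\bar x\in\pi(A)[J]$ I would choose a lift $a\in A$ with $\pi(a)=\bar x$; then $a_i\in K_i$ for every $i\notin J$. The element $k$ defined by $k_i=a_i$ for $i\notin J$ and $k_i=e$ for $i\in J$ lies in $K\subseteq A$, and $a\rdiv k$ has coordinates $a_i$ on $J$ and $e$ off $J$, so $a\rdiv k\in A[J]$ and $\pi(a\rdiv k)=\bar x$. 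This right-division trick is the only step that uses more than formal manipulation, and I expect it to be the main obstacle: it is precisely where the hypotheses $K\le A$ and $K=\prod_{i\in I}K_i$ (which guarantee $k\in A$) are essential. Once $\pi(A)[J]=\pi(A[J])$ is established, $K_J\le A[J]_J$ follows by lifting each $(k_i)_{i\in J}\in K_J$ to the element of $K\subseteq A$ supported on $J$, and the final identity $(\pi(A)[J])_J=A[J]_J/K_J$ follows by applying $p_J$ and using that $p_J\circ\pi$ agrees with the quotient map $X_J\to X_J/K_J$ under the identification $X_J/K_J\cong\prod_{i\in J}X_i/K_i$.
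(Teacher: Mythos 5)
Your proposal is correct and takes essentially the same approach as the paper: it factors $\pi$ through $X/K$ via the canonical isomorphism, reads (i)--(ii) off the Correspondence Theorem, and proves the key equality $\pi(A)[J]=\pi(A[J])$ in (iv) by exactly the paper's right-division trick, dividing a lift $a$ by the element $k\in K$ that agrees with $a$ off $J$ and is trivial on $J$. The only cosmetic difference is in (iii), where the paper lifts elements directly rather than invoking the single-factor correspondence, but this is not a substantive divergence.
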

\begin{proof}
Let $A\le X$ and $B=\pi(X)$. We can write $\pi = \prod_{i\in I}\pi_{X_i/K_i}$ as $\pi = \gamma\pi_{X/K}$, where $\gamma:X/K\to \prod X_i/K_i$ is the isomorphism given by $(x_i)K\mapsto (x_iK_i)$. Parts (i) and (ii) then follow from the Correspondence Theorem applied to $\pi_{X/K}$.

(iii) Suppose that $A\lesd X$. Fix $j\in I$ and $a_jK_j\in X_j/K_j$. There is $(x_i)\in A$ such that $x_j=a_j$ and thus $(x_iK_i)\in B$ and $a_jK_j\in B_j$. Conversely, suppose that $B\lesd\prod X_i/K_i$. Fix $j\in I$ and $a_j\in X_j$. There is $(x_iK_i)\in B$ such that $x_jK_j=a_jK_j$ and we can assume that $a_j=x_j$. Then $(x_i)\in A$ and $a_j\in A_j$.

(iv) Let $(x_iK_i)\in \pi(A)[J]$. Then there is $a=(a_i)\in A$ such that $(a_iK_i)=(x_iK_i)$ and $a_iK_i=K_i$ for $i\not\in J$. Let $k=(k_i)$ be defined by $k_i=1$ if $i\in J$, else $k_i=a_i$.  Note that $k\in K$. Then $c=a/k\in A\cap X[J]=A[J]$ since $K\le A$ and $\pi(c)=\pi(a)=x$. Conversely, if $(x_iK_i)\in \pi(A[J])$ then there is $(a_i)\in A[J]\le A$ such that $(x_iK_i)=(a_iK_i)$, so certainly $a_iK_i=K_i$ for $i\not\in J$ and $(x_iK_i)\in \pi(A)[J]$.

If $x\in K_J$ then $x=k_J$ for some $k\in K$. Let $\ell = e_J(k_J)\in K[J]\le A[J]$. Then $\ell_J = k_J=x$ and $x\in A[J]_J$ follows. Finally, $(\pi(A)[J])_J = \pi(A[J])_J = ((A[J]K)/K)_J = (A[J]K)_J/K_J = (A[J]_J K_J)/K_J = A[J]_J/K_j$.
\end{proof}

In the situation of Proposition \ref{correspondence}, if $K\le A$ then we certainly have $K_i\le A[i]_i$ for every $i\in I$, since $K_i$ embeds into $A[i]$. Conversely, if $I$ is \emph{finite}, $A\le X$ and $K_i\le A[i]_i$ for every $i$, then $K\le A$ by Lemma \ref{subloop}.

\begin{lemma}\label{factor-by-flat}
Let $I$ be a finite index set, $A\unlhd X=\prod_{i\in I}X_i$ and $N_i=A[i]_i$ for every $i\in I$. Then $N_i\unlhd X_i$ and we can define $\pi=\prod_{i\in I}\pi_{X_i/N_i}$, $B=\pi(A)$ and $Y=\pi(X)=\prod_{i\in I}X_i/N_i$. Then:
\begin{enumerate}
\item[(i)] $B\unlhd Y$.
\item[(ii)] $B$ is a flat subloop of $Y=\prod_{i\in I}X_i/N_i$.
\item[(iii)] $A\lesd X$ if and only if $B\lesd Y$.
\item[(iv)] $X/A$ is isomorphic to $Y/B$.
\end{enumerate}
\end{lemma}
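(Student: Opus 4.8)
The plan is to reduce everything to \pref{correspondence}, applied to the normal subloop $K = \prod_{i \in I} N_i$. The first task is to check that the ingredients of that proposition are genuinely available. Since $A \unlhd X$, \lref{support} with $J = \{i\}$ gives $A[i]_i \unlhd X_i$, that is $N_i \unlhd X_i$, so $K = \prod_{i \in I} N_i$ is a normal subloop of $X$ and the objects $Y = \prod_{i \in I} X_i/N_i$, $\pi = \prod_{i \in I} \pi_{X_i/N_i}$, $B = \pi(A)$ are well defined as claimed. The crucial point — and the only place where finiteness of $I$ is used — is that $K \le A$. Indeed $K_i = N_i = A[i]_i$ for every $i$, so $K = \prod_{i \in I} A[i]_i \le A$ by \lref{subloop}. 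Hence $A$ lies in the range of the lattice correspondence described in \pref{correspondence}, and I may invoke all four of its parts with $B = \pi(A)$.

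With this in hand, parts (i) and (iii) are immediate: (i) is \pref{correspondence}(ii) applied to the normal subloop $A$, and (iii) is exactly \pref{correspondence}(iii). For (ii) I would unwind the definition of flatness: $B$ is flat precisely when $B[i]_i$ is the trivial subloop of $Y_i = X_i/N_i$ for every $i$. Taking $J = \{i\}$ in \pref{correspondence}(iv) computes $(\pi(A)[i])_i = A[i]_i/N_i = N_i/N_i$, which is the identity subloop of $X_i/N_i$; thus $B[i]_i = \{e\}$ for each $i$, so $B$ is flat.

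Finally, for (iv) I would use the factorization $\pi = \gamma \pi_{X/K}$ recorded in the proof of \pref{correspondence}, where $\gamma : X/K \to Y$ is an isomorphism. Since $\pi_{X/K}(A) = A/K$, we have $B = \pi(A) = \gamma(A/K)$ and $Y = \gamma(X/K)$, whence $Y/B \cong (X/K)/(A/K)$. The third isomorphism theorem for loops then identifies the right-hand side with $X/A$, giving $X/A \cong Y/B$.

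I expect no serious obstacle: each assertion is a direct reading of \pref{correspondence} once $K \le A$ is secured, together with a standard isomorphism theorem for (iv). The one subtlety worth flagging is exactly that inclusion, which can fail when $I$ is infinite (cf. the example following \lref{subloop}); the hypothesis that $I$ is finite is precisely what makes the reduction to \pref{correspondence} legitimate.
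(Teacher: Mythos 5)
Your proposal is correct and follows essentially the same route as the paper's own proof: both set $K=\prod_{i\in I}A[i]_i$, use Lemma \ref{subloop} (finiteness of $I$) to get $K\le A$ and Lemma \ref{support} for normality, then read off (i)--(iii) from Proposition \ref{correspondence} and obtain (iv) from the factorization $\pi=\gamma\pi_{X/K}$ together with the Third Isomorphism Theorem. Your explicit flagging of where finiteness enters matches the paper's reasoning exactly.
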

\begin{proof}
Let $K_i=N_i=A[i]_i$ and $K=\prod_{i\in I}K_i$. By Lemma \ref{subloop}, $K\le A$ since $I$ is finite. By Lemma \ref{support}, $K_i\unlhd X_i$ and thus $K\unlhd X$. By Proposition \ref{correspondence}, $B\unlhd Y$, $B[i]_i = A[i]_i/K_i = K_i/K_i = K_i$ and, also, $A\lesd X$ if and only if $B\lesd Y$. Write $\pi = \gamma\pi_{X/K}$ as in the proof of Proposition \ref{correspondence}. Using the Third Isomorphism Theorem, we then have $X/A\cong (X/K)/(A/K) = \pi_{X/K}(X)/\pi_{X/K}(A) \cong \gamma\pi_{X/K}(X)/\gamma\pi_{X/K}(A) = Y/B$.
\end{proof}

\section{Lifted isomorphism graphs and subdirect products}\label{Sc:Lifted}

Let $\vhi\colon X_1 \to X_2$ be a mapping between loops. The \emph{graph of $\vhi$} is the set
\begin{displaymath}
    \gr(\vhi) = \setof{(x,\vhi(x))}{x\in X_1}.
\end{displaymath}

\begin{lemma}\label{graph-of-hom}
Let $X_1$, $X_2$ be loops and $\vhi\colon X_1 \to X_2$ a mapping. Then $\vhi$ is a homomorphism if and only if $\gr(\vhi)\le X_1\times X_2$.
\end{lemma}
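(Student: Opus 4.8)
The plan is to prove each direction by direct verification: the forward implication rests on closure of the graph under all loop operations, and the backward one exploits the fact that the graph of a function is a functional relation.

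For the forward direction, assume $\vhi$ is a homomorphism. The set $\gr(\vhi)$ is nonempty, as it contains $(x,\vhi(x))$ for any $x\in X_1$. I would then check closure under the three loop operations. Closure under multiplication is immediate:
\begin{displaymath}
(x,\vhi(x))\cdot(x',\vhi(x')) = (xx',\vhi(x)\vhi(x')) = (xx',\vhi(xx'))\in\gr(\vhi).
\end{displaymath}
For the divisions I would invoke the identities $\vhi(x\ldiv y)=\vhi(x)\ldiv\vhi(y)$ and $\vhi(x\rdiv y)=\vhi(x)\rdiv\vhi(y)$ recorded in the preliminaries, which yield closure of $\gr(\vhi)$ under $\ldiv$ and $\rdiv$ in exactly the same way. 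Since a nonempty subset of $X_1\times X_2$ closed under $\cdot$, $\ldiv$, $\rdiv$ is a subloop (the identity is forced in: for any $(a,b)\in\gr(\vhi)$ we have $(a,b)\ldiv(a,b)=(e,e)$ by cancellation), we conclude $\gr(\vhi)\le X_1\times X_2$.

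For the backward direction, assume $\gr(\vhi)\le X_1\times X_2$. The key observation is that the graph is a functional relation: for every $x\in X_1$ there is exactly one element of $\gr(\vhi)$ with first coordinate $x$, namely $(x,\vhi(x))$. Equivalently, the restriction of the projection $p_1$ to $\gr(\vhi)$ is a bijection onto $X_1$. Now fix $x,y\in X_1$. Since $\gr(\vhi)$ is closed under multiplication, the product $(x,\vhi(x))(y,\vhi(y))=(xy,\vhi(x)\vhi(y))$ lies in $\gr(\vhi)$. But the unique element of $\gr(\vhi)$ with first coordinate $xy$ is $(xy,\vhi(xy))$, so comparing second coordinates gives $\vhi(x)\vhi(y)=\vhi(xy)$. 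As $x,y$ were arbitrary, $\vhi$ is a homomorphism.

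Neither direction presents a genuine obstacle. The only point demanding a little care is in the forward direction, where closure under $\ldiv$ and $\rdiv$ (and not merely under $\cdot$) is what makes $\gr(\vhi)$ a subloop; this is precisely why the division-preservation identities from the preliminaries are needed rather than the defining homomorphism property alone.
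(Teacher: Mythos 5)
Your proof is correct and takes essentially the same approach as the paper: both hinge on the single observation that $(x,\vhi(x))(y,\vhi(y))=(xy,\vhi(x)\vhi(y))$ lies in $\gr(\vhi)$ if and only if $\vhi(xy)=\vhi(x)\vhi(y)$, using the functionality of the graph. You merely make explicit the details the paper's one-line proof leaves implicit, namely closure under $\ldiv$ and $\rdiv$ (via the division-preservation identities from the preliminaries) and the presence of the identity element.
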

\begin{proof}
Indeed, if $x,y\in X_1$, then $(x,\vhi(x))(y,\vhi(y)) = (xy,\vhi(x)\vhi(y))$ belongs to $\gr(\vhi)$ if and only if $\vhi(x)\vhi(y)=\vhi(xy)$.
\end{proof}

\begin{lemma}\label{graph-of-inj}
Let $\vhi\colon X_1 \to X_2$ be an injective loop homomorphism. Then the following conditions are equivalent:
\begin{enumerate}
\item[(i)] $\gr(\vhi)\unlhd X_1\times X_2$,
\item[(ii)] $X_1$ is abelian and $\img(\vhi)\le Z(X_2)$,
\item[(iii)] $\gr(\vhi)\le Z(X_1\times X_2)$.
\end{enumerate}
\end{lemma}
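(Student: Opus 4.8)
The plan is to prove the cycle of implications $(i)\Rightarrow(ii)\Rightarrow(iii)\Rightarrow(i)$, resting on one structural fact about centers in products: since all translations in a direct product act coordinatewise, every inner generating term satisfies
\begin{displaymath}
    t\bigl((x_1,x_2),(y_1,y_2),(z_1,z_2)\bigr) = \bigl(t(x_1,y_1,z_1),\,t(x_2,y_2,z_2)\bigr),
\end{displaymath}
whence $Z(X_1\times X_2)=Z(X_1)\times Z(X_2)$. By \lref{graph-of-hom} we already know that $\gr(\vhi)\le X_1\times X_2$, so all three statements concern this fixed subloop.

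The heart of the argument is $(i)\Rightarrow(ii)$. Expressing normality of $\gr(\vhi)$ through the criterion recalled in the introduction, for every inner generating term $t$, all $(x_1,x_2),(y_1,y_2)\in X_1\times X_2$ and all $z\in X_1$ the element $t((x_1,x_2),(y_1,y_2),(z,\vhi(z)))$ must lie in $\gr(\vhi)$. Evaluating coordinatewise, this is exactly the compatibility identity $\vhi(t(x_1,y_1,z)) = t(x_2,y_2,\vhi(z))$. The key observation is that its left-hand side is independent of $(x_2,y_2)$ while its right-hand side is independent of $(x_1,y_1)$, so the two sides decouple. Setting $x_2=y_2=e$ makes the right-hand side equal to $\vhi(z)$ (because $t(e,e,w)=w$ for every inner generating term $t$), and injectivity of $\vhi$ then forces $t(x_1,y_1,z)=z$ for all $x_1,y_1,z\in X_1$; this says precisely that every element of $X_1$ is central, i.e.\ $X_1$ is abelian. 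Symmetrically, setting $x_1=y_1=e$ makes the left-hand side equal to $\vhi(z)$, giving $t(x_2,y_2,\vhi(z))=\vhi(z)$ for all $x_2,y_2\in X_2$ and all $z\in X_1$, which is exactly $\img(\vhi)\le Z(X_2)$.

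The remaining implications are short. For $(ii)\Rightarrow(iii)$, $X_1$ abelian gives $X_1=Z(X_1)$, so for each $x\in X_1$ we have $x\in Z(X_1)$ and $\vhi(x)\in\img(\vhi)\le Z(X_2)$, whence $(x,\vhi(x))\in Z(X_1)\times Z(X_2)=Z(X_1\times X_2)$; thus $\gr(\vhi)\le Z(X_1\times X_2)$. For $(iii)\Rightarrow(i)$, any central subloop is normal, since a central element $z$ satisfies $t(x,y,z)=z$ for every inner generating term, so $t(x,y,z)\in\gr(\vhi)$ holds automatically. The only real work is the decoupling step in $(i)\Rightarrow(ii)$; once one sees that normality of a graph over a product splits into a single equation whose two sides depend on disjoint sets of parameters, the rest is bookkeeping with the identity element.
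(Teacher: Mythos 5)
Your proof is correct and takes essentially the same approach as the paper's: both evaluate inner generating terms coordinatewise on $\gr(\vhi)$, substitute the identity element in one factor at a time, and use injectivity of $\vhi$ to force $t(x,y,z)=z$, yielding the same cycle (i)$\Rightarrow$(ii)$\Rightarrow$(iii)$\Rightarrow$(i). The only cosmetic difference is that you isolate the identity $Z(X_1\times X_2)=Z(X_1)\times Z(X_2)$ as a separate fact, which the paper instead verifies inline.
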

\begin{proof}
Working in the direct product $X_1\times X_2$, we have
\begin{equation}\label{Eq:t}
    t((x,u),(y,v),(z,\vhi(z))) = (t(x,y,z),t(u,v,\vhi(z)))
\end{equation}
for every $x,y,z\in X_1$, $u,v\in X_2$ and every inner generating term $t$.

Suppose that (i) holds. Then \eqref{Eq:t} is an element of $\gr(\vhi)$ and substituting $u=v=e$, we obtain $(t(x,y,z),\vhi(z)) \in\gr(\vhi)$. Since $(z,\vhi(z))\in\gr(\vhi)$ and $\vhi$ is injective, it follows
that $t(x,y,z)=z$ and $X_1$ is abelian. With $x=y=e$ in \eqref{Eq:t}, we obtain $(z,t(u,v,\vhi(z)))\in\gr(\vhi)$, which means that $t(u,v,\vhi(z))=\vhi(z)$ and $\img(\vhi)\le Z(X_2)$.

Now suppose that (ii) holds. Then $(t(x,y,z),t(u,v,\vhi(z))) = (z,\vhi(z))$ and hence \eqref{Eq:t} shows $\gr(\vhi)\le Z(X_1\times X_2)$. Clearly, (iii) implies (i).
\end{proof}

\begin{lemma}\label{graph-iso-restricted}
Let $\vhi\colon K_1\to K_2$ be an isomorphism of loops and let $K_i\unlhd X_i$ for $i=1,2$. Then $\gr(\vhi)\unlhd X_1\times X_2$ if and only if $K_i\le Z(X_i)$ for $i=1,2$.

In particular, if $\vhi\colon X_1\to X_2$ is an isomorphism of loops, then $\gr(\vhi)\unlhd X_1\times X_2$ if and only if $X_1$ is abelian.
\end{lemma}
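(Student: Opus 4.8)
The plan is to reuse the componentwise identity \eqref{Eq:t} established for Lemma \ref{graph-of-inj}, now reading it with $z\in K_1$ so that $\vhi(z)$ is defined, together with the characterizations of normality and centrality in terms of inner generating terms. Throughout I use that membership $(a,b)\in\gr(\vhi)$ is equivalent to $a\in K_1$ and $b=\vhi(a)$, and that every inner generating term satisfies $t(e,e,w)=w$ (each standard generator collapses to the identity map when its two parameter arguments are set to $e$, since $L_e=R_e=\mathrm{id}$).

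For the ``if'' direction I would assume $K_i\le Z(X_i)$. Fix $z\in K_1$, arbitrary $(x,u),(y,v)\in X_1\times X_2$, and an inner generating term $t$. Centrality of $z$ in $X_1$ gives $t(x,y,z)=z$, and centrality of $\vhi(z)\in K_2$ in $X_2$ gives $t(u,v,\vhi(z))=\vhi(z)$; hence by \eqref{Eq:t} the element $t((x,u),(y,v),(z,\vhi(z)))$ equals $(z,\vhi(z))\in\gr(\vhi)$. Since this holds for all $t$, all parameters, and all $z\in K_1$, we obtain $\gr(\vhi)\unlhd X_1\times X_2$.

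For the ``only if'' direction I would assume $\gr(\vhi)\unlhd X_1\times X_2$ and extract the two centrality conditions by specializing the parameters. Setting $u=v=e$ in \eqref{Eq:t} and using $t(e,e,\vhi(z))=\vhi(z)$, normality gives $(t(x,y,z),\vhi(z))\in\gr(\vhi)$; graph membership forces $\vhi(t(x,y,z))=\vhi(z)$, so injectivity of $\vhi$ yields $t(x,y,z)=z$ for all $x,y\in X_1$ and all $t$, i.e.\ $z\in Z(X_1)$. As $z\in K_1$ was arbitrary, $K_1\le Z(X_1)$. Symmetrically, setting $x=y=e$ and using $t(e,e,z)=z$ gives $(z,t(u,v,\vhi(z)))\in\gr(\vhi)$, whence $t(u,v,\vhi(z))=\vhi(z)$ for all $u,v\in X_2$ and all $t$; thus $\vhi(z)\in Z(X_2)$, and since $\vhi$ maps $K_1$ onto $K_2$ we conclude $K_2\le Z(X_2)$.

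Finally, for the ``in particular'' clause I would apply the main statement with $K_i=X_i$, each $X_i$ being trivially normal in itself. The criterion becomes $X_1\le Z(X_1)$ and $X_2\le Z(X_2)$, that is, both $X_1$ and $X_2$ are abelian; since $\vhi$ is an isomorphism $X_1\cong X_2$, these conditions are equivalent, leaving exactly ``$X_1$ is abelian''. The only point demanding care is the bookkeeping in the graph-membership test, where injectivity of $\vhi$ does the work in the first coordinate and surjectivity onto $K_2$ propagates centrality to the second; beyond this the argument is a direct specialization of \eqref{Eq:t}, so I do not expect a genuine obstacle.
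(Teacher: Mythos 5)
Your proof is correct and follows essentially the same route as the paper's: both directions rest on the componentwise identity \eqref{Eq:t} together with the inner-generating-term characterizations of normality and centrality, with injectivity of $\vhi$ extracting $t(x,y,z)=z$ in the first coordinate. The only cosmetic difference is in the ``if'' direction, where the paper notes $\gr(\vhi)\le Z(X_1)\times Z(X_2)=Z(X_1\times X_2)$ and concludes normality from centrality, while you verify the normality condition directly --- the same argument, just unwound.
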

\begin{proof}
If $K_i\le Z(X_i)$ then $\gr(\vhi)\le K_1\times K_2\le Z(X_1)\times Z(X_2) = Z(X_1\times X_2)$ an hence $\gr(\vhi)\unlhd X_1\times X_2$. Conversely, suppose that $\gr(\vhi)\unlhd X_1\times X_2$. Then $(t(x,y,z),\vhi(z)) = (t(x,y,z),t(e,e,\vhi(z))) = t((x,e),(y,e),(z,\vhi(z)))\in \gr(\vhi)$ for every $x,y\in X_1$ and $z\in K_1$. But then $t(x,y,z)=z$ follows and we have $K_1\le Z(X_1)$. Similarly, $K_2\le Z(X_2)$.

If $\vhi\colon X_1\to X_2$ is an isomorphism, we deduce that $G(\vhi)\unlhd X_1\times X_2$ if and only if $X_i\le Z(X_i)$ for $i=1,2$, which says that $X_1$, $X_2$ are abelian. Since $X_1$ is isomorphic to $X_2$, it suffices to check that $X_1$ is abelian.
\end{proof}

For $N_1\unlhd X_1$, $N_2\unlhd X_2$ and a mapping $\vhi\colon X_1/N_1\to X_2/N_2$, let
\begin{align*}
    \gr_{X_1/N_1,X_2/N_2}(\vhi) &= (\pi_{X_1/N_1}\times \pi_{X_2/N_2})^{-1}(\gr(\vhi))\\
     &= \setof{(x_1,x_2)\in X_1\times X_2}{\vhi(x_1N_1)=x_2N_2}.
\end{align*}
If $\vhi$ is a homomorphism then $\gr_{X_1/N_1,X_2/N_2}(\vhi)$ is a subloop
of $X_1\times X_2$, being a preimage of $\gr(\vhi)$ under the homomorphism
$\pi_{X_1/N_1}\times\pi_{X_2/N_2}$.

We call a subset $A$ of $X_1\times X_2$ a \emph{lifted isomorphism graph in $X_1\times X_2$} if there exist $N_1\unlhd X_1$, $N_2\unlhd X_2$ and an isomorphism $\vhi\colon X_1/N_1\to X_2/N_2$ such that $A = \gr_{X_1/N_1,X_2/N_2}(\vhi)$. A typical lifted isomorphism graph can be visualized as follows
\begin{displaymath}
\begin{array}{|c|c|c|c|}
    \hline
    \cellcolor{light-gray}N_1\times N_2&&&\\
    \hline
    &&\cellcolor{light-gray}\phantom{N_1\times N_2}&\\
    \hline
    &&&\cellcolor{light-gray}\phantom{N_1\times N_2}\\
    \hline
    &\cellcolor{light-gray}\phantom{N_1\times N_2}&&\\
    \hline
\end{array}\,,
\end{displaymath}
where the vertical axis is indexed by cosets of $N_1$ and
the horizontal axis is indexed by cosets of $N_2$.

Given a lifted isomorphism graph $A$ in $X_1\times X_2$, it is clear that $N_1$, $N_2$ and $\varphi$ are uniquely determined. In particular, $N_i=A[i]_i$ for $i=1,2$.

\begin{proposition}[Goursat's Lemma for loops]\label{Goursat}
Let $X_1$ and $X_2$ be loops. The subdirect products of $X_1\times X_2$
are precisely the lifted isomorphism graphs in $X_1\times X_2$.

Moreover, if $A=\gr_{X_1/N_1,X_2/N_2}(\vhi)$ is a lifted isomorphism graph in $X_1\times X_2$ then $A\unlhd X_1\times X_2$ if and only if $X_1/N_1$ is abelian.
\end{proposition}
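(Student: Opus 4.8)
The plan is to establish the two inclusions of the set equality and then read off the normality criterion from the correspondence machinery and \lref{graph-iso-restricted}. The easy inclusion is that every lifted isomorphism graph $A=\gr_{X_1/N_1,X_2/N_2}(\vhi)$ is subdirect. Given $x_1\in X_1$, surjectivity of $\vhi$ supplies a coset $x_2N_2=\vhi(x_1N_1)$, and any representative $x_2$ yields $(x_1,x_2)\in A$, so $A_1=X_1$; symmetrically $A_2=X_2$. As $A$ is already a subloop, being the preimage of $\gr(\vhi)$ under $\pi_{X_1/N_1}\times\pi_{X_2/N_2}$, this gives $A\lesd X_1\times X_2$.

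The substance lies in the converse. Starting from $A\lesd X_1\times X_2$, I would set $N_i=A[i]_i$; then \lref{support} applied with $J=\{i\}$, together with $A_i=X_i$, gives $N_i\unlhd X_i$. The crucial construction is the map $\vhi\colon X_1/N_1\to X_2/N_2$ sending $x_1N_1$ to $x_2N_2$ whenever $(x_1,x_2)\in A$; such $x_2$ exists by subdirectness. The bulk of the work is well-definedness, and the only tool available is closure of $A$ under the loop operations. The mechanism I would use repeatedly is this: if $(x_1,x_2),(x_1,x_2')\in A$ share a first coordinate, then $(x_1,x_2)\bs(x_1,x_2')=(e,x_2\bs x_2')\in A[2]$, whence $x_2\bs x_2'\in N_2$ and the second coordinates lie in a common coset; multiplying by a suitable element $(x_1\bs x_1',e)\in A[1]$ reduces the general case $x_1N_1=x_1'N_1$ to this shared-first-coordinate case. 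The same division trick, now dividing out matching second coordinates, proves injectivity; surjectivity is immediate from subdirectness, and the homomorphism property is componentwise. Finally the identity $A=\gr_{X_1/N_1,X_2/N_2}(\vhi)$ follows by two inclusions, once more passing between coset representatives by multiplying by elements of $A[1]$ or $A[2]$.

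For the normality statement I would push everything through $\pi=\pi_{X_1/N_1}\times\pi_{X_2/N_2}$, under which $A=\pi^{-1}(\gr(\vhi))$ maps onto the plain graph $\gr(\vhi)\le(X_1/N_1)\times(X_2/N_2)$. Since $N_1\times N_2=A[1]_1\times A[2]_2\le A$ by \lref{subloop} (the index set being finite), \pref{correspondence}(ii) gives $A\unlhd X_1\times X_2$ if and only if $\gr(\vhi)\unlhd(X_1/N_1)\times(X_2/N_2)$, and the ``in particular'' clause of \lref{graph-iso-restricted} identifies the latter with abelianness of $X_1/N_1$. I expect the main obstacle to be the bookkeeping in the well-definedness argument: since loops lack inverses, each coset equality must be translated into membership in $N_i$ through left division, and the computations must be arranged so that closure of $A$ under multiplication and division performs all the lifting.
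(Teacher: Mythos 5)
Your proposal is correct, but it is organized differently from the paper's proof, so a comparison is in order. For the converse (subdirect $\Rightarrow$ lifted graph), the paper proceeds in two steps: it first treats the flat case $N_1=N_2=\{e\}$, where the division trick shows $A$ is the graph of a bijection and \lref{graph-of-hom} upgrades that bijection to an isomorphism; it then reduces the general case to the flat one by pushing $A$ through $\pi=\pi_{X_1/N_1}\times\pi_{X_2/N_2}$ via \pref{correspondence} and pulling back, $A=\pi^{-1}(\pi(A))=\gr_{X_1/N_1,X_2/N_2}(\vhi)$. You instead construct $\vhi$ directly on cosets and carry out the well-definedness, injectivity, surjectivity and homomorphism checks by hand; your mechanism (divide elements of $A$ sharing a coordinate to land in $A[1]$ or $A[2]$, and pass between coset representatives by multiplying by $(x_1\bs x_1',e)\in A[1]$ or $(e,x_2\bs x_2')\in A[2]$) is exactly the paper's flat-case computation, but performed at the level of coset representatives rather than after quotienting. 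What the paper's route buys is economy: \pref{correspondence} absorbs all the coset bookkeeping, including the containment $N_1\times N_2\le A$ (via \lref{subloop}) that legitimizes $A=\pi^{-1}(\pi(A))$. What your route buys is self-containedness: apart from \lref{support}, which gives $N_i=A[i]_i\unlhd X_i$, the main equivalence uses nothing beyond closure of $A$ under multiplication and the two divisions, and it avoids both \lref{graph-of-hom} and \pref{correspondence}. For the normality clause the two arguments coincide, except that you are actually more explicit than the paper: the paper cites only \lref{graph-iso-restricted}, leaving tacit the transfer of normality between $A$ and $\gr(\vhi)$ across $\pi$, which is precisely the application of \pref{correspondence}(ii) that you spell out, justified by $N_1\times N_2\le A$ from \lref{subloop}.
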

\begin{proof}
Every lifted isomorphism graph in $X_1\times X_2$ is clearly a subdirect product of $X_1\times X_2$. Conversely, let $A\lesd X_1\times X_2$, let $N_i = A[i]_i$ and note that $N_i\unlhd A_i=X_i$ by Lemma \ref{support}.

Suppose first that $N_1=\{e\}$ and $N_2=\{e\}$. If $(x_1,x_2)$, $(x_1,y_2)\in A$ then $(e,x_2/y_2) = (x_1/x_1,x_2/y_2)\in A$ and hence $x_2=y_2$ since $N_1=\{e\}$. Therefore,
for each $x_1\in X_1$ there exists exactly one $x_2\in X_2$ such that $(x_1,x_2)\in A$. Similarly, for each $x_2\in X_2$ there exists
exactly one $x_1\in X_1$ such that $(x_1,x_2)\in A$. Thus $A = \gr(\vhi)$ for some bijection $\vhi\colon X_1\to X_2$. By Lemma \ref{graph-of-hom},
$\vhi$ is an isomorphism.

In the general case, let $\pi  = \pi_{X_1/N_1}\times \pi_{X_2/N_2}$. By Proposition \ref{correspondence},
$\pi(A)\lesd X_1/N_1 \times X_2/N_2$ and $\pi(A)[i]_i = A[i]_i/N_i = N_i/N_i=N_i$.
By the previous paragraph, $\pi(A)=\gr(\vhi)$ for some isomorphism $\vhi\colon X_1/N_1\to X_2/N_2$.
Thus $A = \pi\m(\pi(A)) = \pi\m(\gr(\vhi)) = \gr_{X_1/N_1,X_2/N_2}(\vhi)$.

The last assertion follows from Lemma \ref{graph-iso-restricted} applied to $\vhi\colon X_1/N_1\to X_2/N_2$.
\end{proof}

\begin{proposition}\label{subdirect-normal}
The following conditions are equivalent for $A\le X_1\times X_2$:
\begin{enumerate}
\item[(i)] $A\unlhd X_1\times X_2$,
\item[(ii)] there exist normal subloops $M_1\unlhd X_1$, $M_2\unlhd X_2$ such that $A\lesd M_1\times M_2$, $A[i]_i = N_i\unlhd X_i$ and $M_i/N_i\le Z(X_i/N_i)$ for $i=1,2$.
\end{enumerate}
If the equivalent conditions are satisfied then $(X_1\times X_2)/A\cong (X_1/N_1\times X_2/N_2)/\gr(\vhi)$ for some isomorphism $\vhi:M_1/N_1\to M_2/N_2$.
\end{proposition}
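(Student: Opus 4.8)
The plan is to reduce both implications to a single computation in the quotient of $X_1\times X_2$ by $N_1\times N_2$, where $A$ becomes an honest isomorphism graph and \lref{graph-iso-restricted} applies verbatim. Throughout, write $N_i=A[i]_i$ and $M_i=p_i(A)$, so that $A\lesd M_1\times M_2$ by the very definition of subdirect product. Assuming for the moment that $N_i\unlhd X_i$ (which I address at the end), set $K=N_1\times N_2$; then $K\le A$ by \lref{subloop} (as $I=\{1,2\}$ is finite and $K_i=N_i=A[i]_i$) and $K\unlhd X_1\times X_2$. Put $\pi=\pi_{X_1/N_1}\times\pi_{X_2/N_2}$ and $B=\pi(A)$. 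By \pref{correspondence} we have $B\lesd (M_1/N_1)\times(M_2/N_2)$ and $B[i]_i=A[i]_i/N_i=\{e\}$, so $B$ is flat; hence by \pref{Goursat} the subdirect product $B$ is a lifted isomorphism graph, and being flat it is an ordinary graph $B=\gr(\vhi)$ for some isomorphism $\vhi\colon M_1/N_1\to M_2/N_2$.

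The heart of the argument is then the chain of equivalences
\[
A\unlhd X_1\times X_2\iff B\unlhd (X_1/N_1)\times(X_2/N_2)\iff M_i/N_i\le Z(X_i/N_i)\text{ for }i=1,2.
\]
The first equivalence is the Correspondence Theorem \pref{correspondence}(ii), applicable precisely because $K\le A$. The second is \lref{graph-iso-restricted} applied to the isomorphism $\vhi\colon M_1/N_1\to M_2/N_2$ and the normal subloops $M_i/N_i\unlhd X_i/N_i$ (here $M_i\unlhd X_i$ guarantees $M_i/N_i\unlhd X_i/N_i$): it states exactly that $\gr(\vhi)=B$ is normal in the product if and only if both $M_i/N_i$ are central.

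It remains to check the normality bookkeeping that makes this chain available in each direction, and to verify the quotient formula. For (i)$\Rightarrow$(ii), assume $A\unlhd X_1\times X_2$; then \lref{support} yields $N_i=A[i]_i\unlhd X_i$ (so $\pi$ is defined), and projecting the normal subloop $A$ along the epimorphism $p_i$ gives $M_i\unlhd X_i$, after which the chain delivers $M_i/N_i\le Z(X_i/N_i)$. For (ii)$\Rightarrow$(i), the hypotheses already supply $M_i\unlhd X_i$, $N_i\unlhd X_i$, and $M_i/N_i\le Z(X_i/N_i)$, so reading the chain from right to left gives $A\unlhd X_1\times X_2$. Finally, the Third Isomorphism Theorem applied to $K\le A$ (exactly as in \lref{factor-by-flat}(iv)) gives $(X_1\times X_2)/A\cong ((X_1/N_1)\times(X_2/N_2))/B=((X_1/N_1)\times(X_2/N_2))/\gr(\vhi)$.

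I expect the only genuine subtlety to be ensuring $N_i\unlhd X_i$ so that the quotient map $\pi$ can be formed: this is automatic from \lref{support} in the forward direction but must be assumed (as it indeed is) in the converse. Beyond that, once $A$ is pushed down to the flat graph $B=\gr(\vhi)$, the statement collapses entirely onto \lref{graph-iso-restricted} and the Correspondence Theorem, and no further calculation is needed.
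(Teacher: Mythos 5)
Your proof is correct and takes essentially the same route as the paper's: quotient by $N_1\times N_2$, identify the image of $A$ as a graph $\gr(\vhi)$ of an isomorphism $M_1/N_1\to M_2/N_2$ via \pref{Goursat}, and transfer normality in both directions through \lref{graph-iso-restricted} combined with the Correspondence Theorem (\pref{correspondence}), with the same normality bookkeeping ($N_i\unlhd X_i$ from \lref{support} forward, assumed in the converse; $M_i\unlhd X_i$ as image of a normal subloop). The only difference is organizational and welcome: you apply Goursat after flattening rather than directly to $A\lesd M_1\times M_2$, and you explicitly verify the final isomorphism $(X_1\times X_2)/A\cong(X_1/N_1\times X_2/N_2)/\gr(\vhi)$ via the Third Isomorphism Theorem as in \lref{factor-by-flat}, a step the paper's proof leaves implicit.
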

\begin{proof}
Suppose that $A\unlhd X_1\times X_2$ and set $M_i=A_i=p_i(A)\unlhd X_i$. Then $A\lesd M_1\times M_2$. We have $N_i=A[i]_i\unlhd X_i$ by Lemma \ref{support} and obviously $N_i\le A_i=M_i$. By Goursat's Lemma, $\pi(A) = \gr(\vhi)$, where $\vhi\colon M_1/N_1\to M_2/N_2$ is some isomorphism and $\pi=\pi_{M_1/N_1}\times \pi_{M_2/N_2}$. Consider $\rho=\pi_{X_1/N_1}\times \pi_{X_2/N_2}$. By the Correspondence Theorem (or see Proposition \ref{correspondence}), $A\unlhd X_1\times X_2$ implies $\rho(A) = \pi(A) = \gr(\vhi)\unlhd X_1/N_1\times X_2/N_2$. By Lemma \ref{graph-iso-restricted}, $A_i/N_i\le Z(X_i/N_i)$ for $i=1,2$.

Conversely, suppose that (ii) holds and let $\vhi$ be the uniquely determined isomorphism $M_1/N_1\to M_2/N_2$ such that $\pi(A) = \gr(\vhi)$. Since $A_i/N_i\le Z(X_i/N_i)$ for $i=1,2$, Lemma \ref{graph-iso-restricted} implies $\gr(\vhi)\unlhd X_1/N_1\times X_2/N_2$. Then, with $\rho$ as above, we have $A = \pi^{-1}(\gr(\vhi)) = \rho^{-1}(\gr(\vhi))\unlhd X_1\times X_2$ by the Correspondence Theorem.
\end{proof}

\begin{lemma}\label{graph-in-graph}
Let $\vhi\colon X_1\to X_2$ and $\psi:X_1/N_1\to X_2/N_2$ be isomorphisms of loops and let $A=\gr_{X_1/N_1,X_2/N_2}(\psi)$. Then:
\begin{enumerate}
\item[(i)] $\gr(\vhi)\le A$ if and only if $\psi\pi_{X_1/N_1} = \pi_{X_2/N_2}\vhi$.
\item[(ii)] If $\gr(\vhi)\le A$ then $A=\setof{(xn,\vhi(x))}{x\in X_1,\,n\in N_1}$.
\item[(iii)] If $\gr(\vhi)\le A$ then $\gr(\vhi)\unlhd A$ if and only if $N_1\le Z(X_1)$.
\item[(iv)] If $\gr(\vhi)\unlhd A$ then $A/\gr(\vhi)\cong N_1\cong N_2$.
\end{enumerate}
\end{lemma}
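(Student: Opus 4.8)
I will treat the four parts in order; parts (i), (ii) and (iv) are bookkeeping around \eqref{Eq:t} and the injectivity of $\vhi$ and $\psi$, while the step I expect to need a genuine idea is the converse half of (iii). For (i), note that $\gr(\vhi)$ is a subloop of $X_1\times X_2$ by \lref{graph-of-hom} and $A$ is a subloop, so $\gr(\vhi)\le A$ is equivalent to the set inclusion $\gr(\vhi)\subseteq A$. By the definition of $A$ this says $\psi(xN_1)=\vhi(x)N_2$ for every $x\in X_1$, which is exactly $\psi\pi_{X_1/N_1}=\pi_{X_2/N_2}\vhi$. Granting this, (ii) follows by a double inclusion: $(xn)N_1=xN_1$ gives $(xn,\vhi(x))\in A$ for the easy direction, while for an arbitrary $(a,b)\in A$ I put $x=\vhi^{-1}(b)$ and combine (i) with $\psi(aN_1)=bN_2$ to get $\psi(aN_1)=\psi(xN_1)$, whence injectivity of $\psi$ yields $aN_1=xN_1$ and $a=xn$ with $n\in N_1$.

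For (iii) I first translate normality into an identity inside $X_1$. Writing elements of $A$ as $(xn,\vhi(x))$ and $(ym,\vhi(y))$ by (ii) and feeding a generator $(z,\vhi(z))$ of $\gr(\vhi)$ into \eqref{Eq:t}, I obtain $t((xn,\vhi(x)),(ym,\vhi(y)),(z,\vhi(z)))=(t(xn,ym,z),\vhi(t(x,y,z)))$, since $\vhi$ commutes with terms. This element lies in $\gr(\vhi)$ exactly when $\vhi(t(xn,ym,z))=\vhi(t(x,y,z))$, so by injectivity of $\vhi$ the condition $\gr(\vhi)\unlhd A$ becomes
\begin{equation*}
t(xn,ym,z)=t(x,y,z)\qquad\text{for all inner generating terms }t,\ x,y,z\in X_1,\ n,m\in N_1,
\end{equation*}
which I denote $(\ast)$.

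It then remains to prove $(\ast)\Leftrightarrow N_1\le Z(X_1)$. The forward direction is a short translation computation: if $n\in N_1$ is central then $L_{xn}=R_nL_x$ and $R_{xn}=R_nR_x$ with $R_n$ commuting with every translation, and substituting these into the standard generators collapses $L_{xn,ym}$, $R_{xn,ym}$ and $T_{xn}$ to $L_{x,y}$, $R_{x,y}$ and $T_x$. The converse is the step requiring the right specialization. For a fixed $w\in N_1$ I read $(\ast)$ three times: with $t=L_{x,y}$ and $n=e$, $y=e$, $m=w$ it gives $L_{x,w}(z)=L_{x,e}(z)=z$, i.e.\ $w(xz)=(wx)z$; with $t=R_{x,y}$ and $n=e$, $y=e$, $m=w$ it gives $R_{x,w}(z)=z$, i.e.\ $(zx)w=z(xw)$; and with $t=T_x$ and $x=e$, $n=w$ it gives $T_w(z)=z$, i.e.\ $wz=zw$. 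These three identities are precisely the statements that $w$ is fixed by $R_{u,v}$, $L_{u,v}$ and $T_u$ for all $u,v\in X_1$, so $w\in Z(X_1)$ and hence $N_1\le Z(X_1)$.

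Finally, for (iv) I may assume $N_1\le Z(X_1)$ by (iii), and (i) forces $\vhi(N_1)=N_2$, so $\vhi$ restricts to an isomorphism $N_1\cong N_2$. To identify the quotient I define $\lambda\colon A\to N_1$ by $\lambda(a,b)=\vhi^{-1}(b)\ldiv a$, which by (ii) is the assignment $(xn,\vhi(x))\mapsto n$. Centrality of $N_1$ gives the product rule $(xn)(ym)=(xy)(nm)$ in $X_1$, which makes $\lambda$ a homomorphism; it is onto because $(n,e)\in A$ maps to $n$, and its kernel is exactly $\gr(\vhi)$. The first isomorphism theorem then gives $A/\gr(\vhi)\cong N_1\cong N_2$, completing the proof.
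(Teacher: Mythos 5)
Your proof is correct in all four parts, and most of it runs parallel to the paper's. Parts (i) and (ii) are essentially identical to the paper's argument (same chain of equivalences, same double inclusion using injectivity of $\psi$). In (iv) you use the same quotient homomorphism in mirror image: the paper takes $f(x,y)=x/\vhi^{-1}(y)$ while you take $\lambda(a,b)=\vhi^{-1}(b)\ldiv a$; both send $(xn,\vhi(x))\mapsto n$ and have kernel $\gr(\vhi)$. Your observation that (i) forces $\vhi(N_1)=N_2$ is a small but genuine shortcut: the paper instead obtains $A/\gr(\vhi)\cong N_2$ by repeating the construction on the second coordinate. The one genuinely different step is in (iii), in the direction $\gr(\vhi)\unlhd A\Rightarrow N_1\le Z(X_1)$. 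The paper places the $N_1$-element in the \emph{argument} slot of the inner term: it takes $(x,\vhi(x)),(y,\vhi(y))\in\gr(\vhi)$ and $(n,e)\in A$, uses the coset identity $t\bigl((x,\vhi(x)),(y,\vhi(y)),(n,e)\bigr)\gr(\vhi)=(n,e)\gr(\vhi)$, and injectivity of $\vhi$ then forces $t(x,y,n)=n$, which is centrality of $n$ literally by the definition in the paper. You instead place $w\in N_1$ in the \emph{parameter} slots: your single equivalence $\gr(\vhi)\unlhd A\Leftrightarrow(\ast)$, obtained from (ii) and injectivity of $\vhi$, specializes to $L_{x,w}=\mathrm{id}$, $R_{x,w}=\mathrm{id}$, $T_w=\mathrm{id}$, and you then need the extra re-reading that these identities coincide, one by one, with $w$ being fixed by all $R_{u,v}$, $L_{u,v}$, $T_u$. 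That re-reading is correct (e.g.\ $L_{x,w}(z)=z$ and $R_{x,z}(w)=w$ both unwind to $w(xz)=(wx)z$), and it is the only place where your argument requires an idea absent from the paper. What your organization buys is uniformity: both directions of (iii) flow through the single identity $(\ast)$, and the easy direction becomes the clean translation computation $L_{xn,ym}=L_{x,y}$, $R_{xn,ym}=R_{x,y}$, $T_{xn}=T_x$ (which is what the paper does implicitly when it asserts $t(xn,ym,z)=t(x,y,z)$ for central $n,m$). What the paper's placement buys is that centrality drops out with no translation step at all.
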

\begin{proof}
(i) The following conditions are equivalent: $\gr(\vhi)\le A$, $\vhi(x) \in \psi(xN_1)$ for every $x\in X$, $\vhi(x)N_2 = \psi(xN_1)$ for every $x\in X_1$, $\pi_{X_2/N_2}\vhi = \psi \pi_{X_1/N_1}$. For the rest of the proof assume that $\gr(\vhi)\le A$.

(ii) If $x\in X_1$ and $n\in N_1$ then $\psi(xnN_1)=\psi(xN_1)=\vhi(x)N_2$ by (i) and thus $(xn,\vhi(x))\in A$. Conversely, let $(x_1,x_2)\in A$ and let $x=\vhi^{-1}(x_2)$. Since $(x_1,x_2)=(x_1,\vhi(x))\in A$ and $(x,\vhi(x))\in\gr(\vhi)\le A$, we have $\psi(x_1N_1)=\psi(xN_1)$, $x_1N_1=xN_1$ and $x_1 = xn$ for some $n\in N_1$.

(iii) Suppose that $\gr(\vhi)\unlhd A$. In general, if $U\unlhd V$, $u_1,u_2\in U$, $v\in V$ and $t$ is an inner generating term, then $t(u_1,u_2,v)U=vU$. Hence $(t(x,y,n),e)\gr(\vhi) = t((x,\vhi(x)),(y,\vhi(y)),(n,e))\gr(\vhi) = (n,e)\gr(\vhi)$ for all $x,y\in X_1$ and $n\in N_1$. Since $N_1\unlhd X_1$, we have $m=t(x,y,n)\in N_1$. We showed $(m,e)\in (n,e)\gr(\vhi))$, so $(m,e)=(n,e)(z,\vhi(z))=(nz,\vhi(z))$ for some $z\in X_1$. But then $z=e$, $n=m=t(x,y,n)$ and $N_1\le Z(X_1)$ follows.

Conversely, suppose that $N_1\le Z(X_1)$. For any $x,y,z\in X_1$ and $n,m\in N_1$ we have $t((xn,\vhi(x)),(ym,\vhi(m)),(z,\vhi(z))) = ( t(xn,ym,z), t(\vhi(x),\vhi(y),\vhi(z))) = (t(x,y,z),\vhi(t(x,y,z))) \in \gr(\vhi)$, where we have used $n,m\in Z(X_1)$. It follows from (ii) that $\gr(\vhi)\unlhd A$.

(iv) Suppose again that $\gr(\vhi)\unlhd A$. Then $N_1\le Z(X_1)$ by (iii). Consider $f\colon A\to N_1$ defined by $f(x,y)=x/\vhi^{-1}(y)$. For $x\in X_1$, $n\in N_1$ we have $f(xn,\vhi(x)) = (xn)/x = n$ thanks to $n\in Z(X_1)$. Then for every $x,y\in X_1$, $n,m\in N_1$, we have $f(xn,\vhi(x))f(ym,\vhi(y)) = nm = f((xy)(nm),\vhi(xy)) = f((xn)(ym),\vhi(x)\vhi(y)) = f((xn,\vhi(x))(ym,\vhi(y)))$, so $f$ is a surjective homomorphism with kernel $\gr(\vhi)$, establishing $A/\gr(\vhi)\cong N_1$. Similarly, $A/\gr(\vhi)\cong N_2$.
\end{proof}

\begin{lemma}\label{simple-1}
Let $A$ be a simple loop that is a homomorphic image of a subdirect product of $X_1\times X_2$. Then $A$ is abelian or a homomorphic image of $X_1$ or a homomorphic image of $X_2$.
\end{lemma}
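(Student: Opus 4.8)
The plan is to realize $A$ as a quotient of a genuine subdirect product and then split into cases according to how the two ``coordinate kernels'' behave. Write $A=\vhi(S)$ where $S\lesd X_1\times X_2$ and $\vhi\colon S\to A$ is a surjective homomorphism. The two normal subloops to watch are $S[1]$ and $S[2]$, which are normal in $S$ by Lemma~\ref{support}. The point is that $S[1]=\Ker(p_2|_S)$ and $S[2]=\Ker(p_1|_S)$, so because $S$ is subdirect we have $S/S[1]\cong X_2$ and $S/S[2]\cong X_1$. Since $\vhi$ is onto and $A$ is simple, each of the normal subloops $\vhi(S[1]),\vhi(S[2])\unlhd A$ equals either $\{e\}$ or $A$, which yields a trichotomy.

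In the first case, if $\vhi(S[1])=\{e\}$ then $S[1]\le\Ker(\vhi)$, so $\vhi$ factors through $S/S[1]\cong X_2$ and $A$ is a homomorphic image of $X_2$; symmetrically, $\vhi(S[2])=\{e\}$ makes $A$ a homomorphic image of $X_1$. The remaining case $\vhi(S[1])=\vhi(S[2])=A$ is where the work lies, and the goal there is to prove that $A$ is abelian. The key observation is that every element of $A$ is simultaneously representable as $\vhi(\alpha)$ with $\alpha\in S[1]$ and as $\vhi(\beta)$ with $\beta\in S[2]$, and that an element of $S[1]$ has the form $(n,e)$ while an element of $S[2]$ has the form $(e,m)$. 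For commutativity, given $a,b\in A$ I would pick $\alpha=(n,e)\in S[1]$ and $\beta=(e,m)\in S[2]$ with $\vhi(\alpha)=a$ and $\vhi(\beta)=b$; then $\alpha\beta=(n,m)=\beta\alpha$ in $X_1\times X_2$, so $ab=ba$. For associativity, given $a,b,c\in A$ I would represent them in the \emph{mixed} pattern $a=\vhi(\alpha)$, $b=\vhi(\beta)$, $c=\vhi(\gamma)$ with $\alpha,\gamma\in S[1]$ and $\beta\in S[2]$. In $X_1\times X_2$ the triple then reads $\alpha=(n_1,e)$, $\beta=(e,m)$, $\gamma=(n_3,e)$, and the identity $(\alpha\beta)\gamma=\alpha(\beta\gamma)$ can be checked coordinatewise: in the first coordinate the middle entry is $e$, and in the second coordinate two of the three entries are $e$, so in each coordinate at least one factor is the identity and associativity holds on the nose. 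Applying $\vhi$ gives $(ab)c=a(bc)$, and together with commutativity $A$ is an abelian group.

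The main obstacle is precisely this last step: a priori neither $X_1$ nor $X_2$ need be associative, so one cannot conclude associativity of $A$ by working naively inside a single coordinate. The device that overcomes this is choosing the representatives from $S[1]$ and $S[2]$ in an alternating pattern, which guarantees that in each coordinate separately one of the three factors collapses to $e$; this is exactly enough to recover associativity in $X_1\times X_2$ before pushing forward through $\vhi$. I expect no difficulty in the two ``quotient'' cases, and the verifications of commutativity and of the coordinatewise associativity are routine once the right representatives are selected, so the entire argument stays elementary and avoids invoking Goursat's Lemma.
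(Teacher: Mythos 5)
Your proof is correct, but it takes a genuinely different route from the paper's. The paper writes $A\cong B/C$ with $B\lesd X_1\times X_2$, $C\unlhd B$, and runs its case analysis on the \emph{projections} $C_1,C_2$ of the kernel: either some $X_i/C_i\cong A$ (so $A$ is a homomorphic image of $X_i$), or $C$ is itself a subdirect product, in which case the paper factors out $C[1]_1\times C[2]_2$ (via Proposition~\ref{correspondence}) to make $C$ flat, invokes Goursat's Lemma (Proposition~\ref{Goursat}) to identify $C$ with the graph of an isomorphism $X_1\to X_2$, and concludes from Lemma~\ref{graph-in-graph} that $A\cong B[1]_1\le Z(X_1)$. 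You instead run the case analysis on the \emph{images} $\vhi(S[1]),\vhi(S[2])$ of the two slices of the subdirect product $S$ itself, which are normal in $A$ since $S[i]\unlhd S$ (Lemma~\ref{support}, or the kernel description $S[1]=\Ker(p_2|_S)$) and surjections preserve normality: if one image is trivial, $\vhi$ factors through $S/S[i]\cong X_{3-i}$; if both equal $A$, your mixed-representative computation --- choosing representatives alternately from $S[1]$ and $S[2]$, so that in each coordinate at least one factor of any product is $e$ --- gives commutativity and associativity of $A$ on the nose, hence $A$ is an abelian group. Your argument is more elementary and self-contained: it needs only the first isomorphism theorem for loops and basic facts about $S[i]$, and it bypasses the entire lifted-graph machinery of Section~\ref{Sc:Lifted}. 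What the paper's longer route buys is stronger structural information in the abelian case --- $A$ is exhibited as a \emph{central} subloop of a quotient of $X_1$, not merely shown to be abelian --- and it exercises the Goursat apparatus that the paper reuses elsewhere (e.g., in Propositions~\ref{subdirect-normal} and~\ref{subdirs-in-powers}). For the lemma as stated, your shorter argument is complete and valid.
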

\begin{proof}
Let $A\cong B/C$, where $B\lesd X=X_1\times X_2$ and $C\unlhd B$. Since $p_1:B\to X_1$ is a surjective homomorphism and $C\unlhd B$, it follows that $X_1/C_1$ is a homomorphic image of $B/C\cong A$. Since $A$ is simple, we have either $X_1/C_1\cong A$ (and we are done) or $C_1=X_1$. Similarly for the second coordinate.

We can therefore assume that $C_1=X_1$ and $C_2=X_2$, i.e., $C\lesd X_1\times X_2$. By Lemmas \ref{subloop} and \ref{support}, $K_i=C[i]_i\unlhd C$, $K=K_1\times K_2\unlhd X$ and $K\le C$. Let $\gamma:X/K\to (X_1/K_1)\times (X_2/K_2)$ be as in the proof of Proposition \ref{correspondence} so that $\pi=\pi_{X_1/K_1}\times\pi_{X_2/K_2} = \gamma\pi_{X/K}$. By the Third Isomorphism Theorem, $B/C\cong (B/K)/(C/K) = \pi_{X/K}(B)/\pi_{X/K}(C)\cong \gamma(\pi_{X/K}(B))/\gamma(\pi_{X/K}(C)) = \pi(B)/\pi(C)$. By Proposition \ref{correspondence}, $\pi(B)\lesd X_1/K_1 \times X_2/K_2$, $\pi(C) \unlhd \pi(B)$ and $(\pi(C)[i])_i = C[i]_i/K_i = K_i/K_i = K_i$ for $i=1,2$.

We can therefore assume without loss of generality that $C$ is flat. By Goursat's Lemma, $C=\gr(\vhi)$ for some isomorphism $\vhi\colon X_1\to X_2$ and $B$ is a lifted isomorphism graph in $X_1\times X_2$ such that $C\le B$. By Lemma \ref{graph-in-graph}, $A\cong B/C$ is isomorphic to $B[1]_1\le Z(X_1)$ and hence it is abelian.
\end{proof}

\begin{lemma}\label{construction}
Let $X$ be a loop, $\sim$ an equivalence relation on $\{1,\dots,k\}$ with $\ell$ equivalence classes and $\vhi_i\in\aut(X)$ for every $1\le i\le k$. Then
\begin{displaymath}
    S_X^\sim(\vhi_1,\dots,\vhi_k) =
    \setof{(x_1,\dots,x_k)\in X^k}{\vhi_i(x_i)=\vhi_j(x_j)
    \text{ whenever }i\sim j}
\end{displaymath}
is a subdirect product of $X^k$ and it is isomorphic to $X^\ell$.
\end{lemma}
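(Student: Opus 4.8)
The plan is to exhibit $S:=S_X^\sim(\vhi_1,\dots,\vhi_k)$ as the image of a single injective loop homomorphism out of $X^\ell$, which settles all three claims simultaneously. Write $C_1,\dots,C_\ell$ for the equivalence classes of $\sim$, so that $\{1,\dots,k\}$ is their disjoint union. The guiding observation is that a tuple $(x_1,\dots,x_k)$ lies in $S$ precisely when, for each class $C_m$, the value $\vhi_i(x_i)$ is independent of the choice of $i\in C_m$; this common value $a_m\in X$ parametrizes the class, and conversely prescribing an arbitrary $a_m$ on each class reconstructs the coordinates uniquely through the automorphisms. Accordingly I would define $\Psi\colon X^\ell\to X^k$ by sending $(a_1,\dots,a_\ell)$ to the tuple whose $i$-th coordinate is $\vhi_i\m(a_m)$ whenever $i\in C_m$.

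First I would check that $\Psi$ is a loop homomorphism. Fixing $(a_m),(b_m)\in X^\ell$ and $i\in C_m$, the $i$-th coordinate of $\Psi((a_m))\Psi((b_m))$ is $\vhi_i\m(a_m)\vhi_i\m(b_m)=\vhi_i\m(a_mb_m)$ because $\vhi_i\m\in\aut(X)$, and this is exactly the $i$-th coordinate of $\Psi((a_mb_m))$; since loop homomorphisms automatically preserve $\ldiv$ and $\rdiv$, no separate check is needed. Injectivity is immediate, as $\Psi((a_m))=\Psi((b_m))$ forces $\vhi_i\m(a_m)=\vhi_i\m(b_m)$ and hence $a_m=b_m$ for each $m$. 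To identify the image, note that for $(a_m)\in X^\ell$ and $i\sim j$ lying in the same $C_m$ we have $\vhi_i(x_i)=a_m=\vhi_j(x_j)$, so $\img(\Psi)\subseteq S$; and if $(x_1,\dots,x_k)\in S$ then $\vhi_i(x_i)$ takes a constant value $a_m$ on each $C_m$, whence $x_i=\vhi_i\m(a_m)$ and $(x_i)=\Psi((a_m))$. Thus $\img(\Psi)=S$, so $S$ is a subloop of $X^k$ and the corestriction $\Psi\colon X^\ell\to S$ is an isomorphism.

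It remains to observe that $S\lesd X^k$. Given $i$ with $i\in C_m$ and an arbitrary $x\in X$, set $a_m=\vhi_i(x)$ and, say, $a_{m'}=e$ for $m'\ne m$; then $\Psi((a_1,\dots,a_\ell))\in S$ has $i$-th coordinate $\vhi_i\m(\vhi_i(x))=x$, so $p_i(S)=X$ for every $i$. There is no substantive obstacle in this argument: it rests entirely on the fact that the inverse of an automorphism is again an automorphism, so that $\vhi_i$ and $\vhi_i\m$ preserve all loop operations. The only point requiring care is the well-definedness of the common value $a_m$ on each class, which is nothing other than the membership condition defining $S$.
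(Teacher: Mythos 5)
Your proof is correct and follows essentially the same route as the paper: both rest on the observation that an element of $S_X^\sim(\vhi_1,\dots,\vhi_k)$ is determined by one free value per equivalence class, with the remaining coordinates recovered through the automorphisms. The only organizational difference is that you package this parametrization as an explicit injective homomorphism $\Psi\colon X^\ell\to X^k$ with image $S$ (which also makes the homomorphism property of the bijection, left implicit in the paper, fully explicit), whereas the paper first verifies closure under multiplication and divisions directly and then notes the free parametrization by class representatives.
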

\begin{proof}
Let $A = S_X^\sim(\vhi_1,\dots,\vhi_k)$. Suppose that $i\sim j$.
If $(x_1,\dots,x_k)$, $(y_1,\dots,y_k)\in A$ then $\vhi_i(x_i)=\vhi_j(x_j)$
and $\vhi_i(y_i)=\vhi_j(y_j)$ imply $\vhi_i(x_iy_i) = \vhi_j(x_jy_j)$,
so $A$ is closed under multiplication. Similarly, $A$ is closed under
divisions and hence it is a subloop of $X^k$.

Let $S$ be a complete set of representatives of the equivalence
classes of $\sim$ on $X$. For every $i\in S$, choose $x_i\in X$
arbitrarily. Then $(x_1,\dots,x_k)$ belongs to $A$ if and only if
for every $j\sim i\in S$ we have $x_j = \vhi_j^{-1}\vhi_i(x_i)$.
Hence the freely chosen tuple $(x_i)_{i\in S}$ uniquely determines
an element $(x_1,\dots,x_k)$ of $A$ and $A\cong X^{|S|}$ follows.
We can certainly arrange for any $1\le i\le k$ to be in $S$. Thus
$A$ is a subdirect product of $X^k$.
\end{proof}

When $\sim$ is the equality relation, Lemma \ref{construction} implies
that $S_X^\sim(\vhi_1,\dots,\vhi_k)\cong X^k$. When $k=2$ and
$\sim$ is the full equivalence relation, then the subdirect
products $S_X^\sim(\vhi_1,\vhi_2)$ of $X\times X$ are
precisely the graphs of automorphisms of $X$.

\section{Subdirect products and normal subloops in $X^k$ for $X$
nonabelian simple}\label{Sc:SN}

\begin{lemma}\label{simple-2}
Let $X$ be a nonabelian simple loop, $k\ge 0$ and $Y$ a loop. The normal subloops of $X^k \times Y$ are precisely the loops $M_1\times \dots \times M_k \times N$, where $M_i\in \{\{e\},X\}$ for each $i \in \{1,\dots,k\}$ and $N\unlhd Y$.
\end{lemma}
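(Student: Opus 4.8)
The plan is to prove the statement by induction on $k$, at each step peeling off one factor and reducing to the two-factor case $X\times Y'$ with $Y'=X^{k-1}\times Y$, exploiting that $X$ is nonabelian and simple so that its only normal subloops are $\{e\}$ and $X$.

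First I would dispose of the easy inclusion: any loop of the form $M_1\times\dots\times M_k\times N$ with $M_i\in\{\{e\},X\}$ and $N\unlhd Y$ is normal in $X^k\times Y$. Since $\{e\}$ and $X$ are normal in $X$, and multiplication and divisions in a direct product are computed coordinatewise, every inner generating term $t$ evaluates coordinatewise; hence if each coordinate of its third argument lies in the corresponding normal factor, so does the value of $t$. By the inner-generating-term characterization of normality recalled in the notation section, this yields $M_1\times\dots\times M_k\times N\unlhd X^k\times Y$.

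For the reverse inclusion I induct on $k$. The base case $k=0$ is immediate, since $X^0\times Y=Y$ and the claim reduces to $N\unlhd Y$. For the inductive step, write $X^k\times Y=X\times Y'$ with $Y'=X^{k-1}\times Y$ and let $A\unlhd X\times Y'$. Applying Proposition~\ref{subdirect-normal} with $X_1=X$, $X_2=Y'$ produces $M_1\unlhd X$, $M_2\unlhd Y'$ with $A\lesd M_1\times M_2$, $N_i=A[i]_i\unlhd X_i$, and $M_i/N_i\le Z(X_i/N_i)$ for $i=1,2$. Simplicity of $X$ forces $M_1,N_1\in\{\{e\},X\}$ with $N_1\le M_1$, and the centrality condition $M_1/N_1\le Z(X/N_1)$ rules out the mixed case $M_1=X$, $N_1=\{e\}$ precisely because $X$ is nonabelian: there $X/N_1=X\not\le Z(X)$. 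Hence either $M_1=N_1=\{e\}$ or $M_1=N_1=X$.

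In the first case $p_1(A)=M_1=\{e\}$, so $A\subseteq\{e\}\times Y'$ and $A=\{e\}\times M_2$. In the second case $A[1]_1=N_1=X$ gives $X\times\{e\}\subseteq A$, and a short loop computation using right division (from $(x/x',e)\in A$ and $(x',m)\in A$ deduce $(x,m)\in A$) shows $A=X\times M_2$. Either way $A=M_1\times N$ with $M_1\in\{\{e\},X\}$ and $N=M_2\unlhd Y'$, and applying the induction hypothesis to $N\unlhd Y'=X^{k-1}\times Y$ writes $N=M_2\times\dots\times M_k\times N'$ in the required form, completing the step. The one point demanding genuine care—and the only place where nonabelianness of $X$ is used—is verifying that the centrality constraint in Proposition~\ref{subdirect-normal} excludes the mixed case; everything else is bookkeeping with coordinatewise operations and the Correspondence Theorem.
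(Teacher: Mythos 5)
Your proof is correct and follows essentially the same route as the paper: induction on $k$ via the decomposition $X^k\times Y = X\times(X^{k-1}\times Y)$, with simplicity and nonabelianness of $X$ collapsing the Goursat data in the first coordinate so that $A$ splits off as $\{e\}\times p_2(A)$ or $X\times p_2(A)$. The only cosmetic differences are that you invoke Proposition~\ref{subdirect-normal} (itself a packaged corollary of Goursat's Lemma) where the paper applies Proposition~\ref{Goursat} directly to $A\lesd X\times p_2(A)$, and that you spell out the easy converse inclusion, which the paper leaves implicit.
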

\begin{proof}
Let $A\unlhd X^k\times Y$. We proceed by induction on $k$. The case $k=0$ is clear.
Suppose that $k\ge 1$, let $X_1=X$, $X_2=X^{k-1}\times Y$ so that $A\unlhd X_1\times X_2$. We have $p_1(A)\unlhd X_1$ and $p_2(A)\unlhd X_2$. By induction, $p_2(A) = M_2\times\cdots\times M_k\times N$ for some $M_i\in\{\{e\},X\}$ and $N\unlhd Y$. Since $X$ is simple, $p_1(A)\in\{\{e\},X\}$.

If $p_1(A)=\{e\}$ then $A=\{e\}\times p_2(A)$ and we are done. Suppose that $p_1(A)=X$ so that $A\lesd X\times p_2(A)$. Note that $A\unlhd X\times p_2(A)$ because $A\unlhd X\times X_2$. By Goursat's Lemma, there are $N_1\unlhd X$ and $N_2\unlhd p_2(A)$ such that $X/N_1\cong p_2(A)/N_2$ and $X/N_1$ is abelian. Since $N_1\in\{\{e\},X\}$ and $X$ is not abelian, we must have $N_1=X$. But then $A=X\times p_2(A)$, finishing the proof.
\end{proof}

\begin{proposition}\label{subdirs-in-powers}
Let $X$ be a nonabelian simple loop and let $k$ be a positive integer. Then the subdirect products of $X^k$ are precisely the subloops $S_X^\sim(\vhi_1,\dots,\vhi_k)$ of Lemma \emph{\ref{construction}}.
\end{proposition}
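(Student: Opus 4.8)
The forward inclusion---that every $S_X^\sim(\vhi_1,\dots,\vhi_k)$ is a subdirect product of $X^k$---is exactly Lemma \ref{construction}, so the plan is to prove the converse: every $A\lesd X^k$ equals some $S_X^\sim(\vhi_1,\dots,\vhi_k)$. I would induct on $k$. The case $k=1$ is immediate, since $A\lesd X$ means $A=X=S_X^\sim(\vhi_1)$. For $k\ge 2$, I split off the last coordinate: writing $X^k=X^{k-1}\times X$, Lemma \ref{partition} gives $B:=p_{\{1,\dots,k-1\}}(A)\lesd X^{k-1}$ and $p_k(A)=X$, hence $A\lesd B\times X$. By the induction hypothesis $B=S_X^{\sim'}(\vhi_1,\dots,\vhi_{k-1})$ for some equivalence relation $\sim'$ on $\{1,\dots,k-1\}$ with, say, $\ell'$ classes, and by Lemma \ref{construction} the projection onto a set $L$ of class representatives restricts to an isomorphism $B\cong X^{\ell'}$.

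Next I would feed $A\lesd B\times X$ into Goursat's Lemma (Proposition \ref{Goursat}), producing $N_1=A[\{1,\dots,k-1\}]_{\{1,\dots,k-1\}}\unlhd B$, $N_2=A[k]_k\unlhd X$, and an isomorphism $\psi\colon B/N_1\to X/N_2$. Simplicity of $X$ forces $N_2\in\{\{e\},X\}$. If $N_2=X$, then $X/N_2$ is trivial, so $N_1=B$ and $A=B\times X$; extending $\sim'$ by letting $k$ form a singleton class and setting $\vhi_k=\mathrm{id}$ gives $S_X^\sim(\vhi_1,\dots,\vhi_k)=B\times X=A$ outright. The substantive case is $N_2=\{e\}$, where $\psi$ exhibits $B/N_1\cong X$ as a simple quotient of $B\cong X^{\ell'}$. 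Transporting $N_1$ across this isomorphism and applying Lemma \ref{simple-2} (with trivial second factor), $N_1$ corresponds to a product of copies of $\{e\}$ and $X$; since a nonabelian simple loop is directly indecomposable, the quotient being $\cong X$ forces exactly one factor $\{e\}$, so $N_1$ is the kernel of the projection of $B\cong X^{\ell'}$ onto a single class-coordinate $c_0$, with representative $r_0$. The Goursat constraint then reads $a_k=\theta\bigl(\vhi_{r_0}(a_{r_0})\bigr)$ for all $a\in A$ and a fixed $\theta\in\aut(X)$. I would record this by enlarging $\sim'$ to $\sim$ through adjoining $k$ to the class $c_0$ and setting $\vhi_k=\theta^{-1}$, so that membership of $a$ in $A$ implies the relation $\vhi_k(a_k)=\vhi_{r_0}(a_{r_0})$, i.e. $a\in S:=S_X^\sim(\vhi_1,\dots,\vhi_k)$.

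It remains to promote $A\subseteq S$ to $A=S$ in the case $N_2=\{e\}$ without counting elements, so that infinite $X$ is handled uniformly. Here $\sim$ has the same classes as $\sim'$, so the representative set $L\subseteq\{1,\dots,k-1\}$ still works, and by Lemma \ref{construction} the projection $p_L$ restricts to a bijection $S\to X^{\ell'}$. Since $L\subseteq\{1,\dots,k-1\}$, we have $p_L(A)=p_L(B)=X^{\ell'}=p_L(S)$. Given any $s\in S$, pick $a\in A\subseteq S$ with $p_L(a)=p_L(s)$; injectivity of $p_L|_S$ yields $s=a\in A$, whence $S\subseteq A$ and $A=S$.

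I expect the case $N_2=\{e\}$ to be the crux: one must pin down exactly which normal subloop $N_1$ gives a simple quotient and confirm that the induced Goursat constraint is a single automorphism-link to one pre-existing class, rather than a relation forcing a coarser equivalence or a new class. The classification in Lemma \ref{simple-2} together with direct indecomposability of a nonabelian simple loop supplies this, and the representative--injectivity argument then closes the equality $A=S$ independently of $|X|$.
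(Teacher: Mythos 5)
Your proof is correct, but it takes a genuinely different route from the paper's. The paper also inducts on $k$, but it treats $k=2$ as a separate base case (directly from Proposition \ref{Goursat}) and, for $k\ge 3$, deletes the \emph{first} coordinate to obtain $B_1=\delta_1(A)\lesd X^{k-1}$, then splits into two cases according to whether the inductive equivalence relation describing $B_1$ is the equality relation. When it is not, the paper invokes the induction hypothesis a second time, on a different deleted-coordinate projection $B_s$, and patches the automorphisms of $B_1$ and $B_s$ together; only when it is equality (so $B_1=X^{k-1}$) does it run an argument like yours: Goursat on $X\times X^{k-1}$, simplicity forcing the first normal subloop to be trivial, and Lemma \ref{simple-2} pinning the second down to the kernel of a single coordinate projection. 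You avoid this case split entirely by applying Goursat to $B\times X$ where $B=p_{\{1,\dots,k-1\}}(A)$ is the full inductive image: since $B\cong X^{\ell'}$, Lemma \ref{simple-2}, transported across that isomorphism, classifies $N_1$ in all cases, and the dichotomy $N_2\in\{\{e\},X\}$ is the only branching. What the paper's route buys is that Lemma \ref{simple-2} is applied verbatim to a literal power $X^{k-1}$, with no transporting of normal subloops across an isomorphism; what your route buys is uniformity --- one use of the induction hypothesis per step, $k=2$ subsumed in the general step, and a cleaner closing argument. Indeed, your $p_L$-injectivity step upgrading $A\subseteq S$ to $A=S$ makes explicit a point that the paper's non-equality case asserts without proof (that membership in $B_s$ together with the constraint $\vhi_r(x_r)=\vhi_s(x_s)$ exactly recovers membership in $A$); the justification there is the same recover-the-missing-coordinate argument you spell out. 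One small dependence to flag: the injectivity of $p_L$ restricted to $S_X^\sim(\vhi_1,\dots,\vhi_k)$ is established inside the \emph{proof} of Lemma \ref{construction} rather than in its statement, so you should cite that argument (the free choice of representative coordinates determining the element) rather than the bare lemma.
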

\begin{proof}
We proceed by induction on $k$. The case $k=1$ is trivial. By Proposition \ref{Goursat},
a subdirect product of $X\times X$ is equal to either $X\times X$ or to
$\gr(\vhi)$ for some $\vhi\in\aut(X)$. By the remark following Lemma
\ref{construction}, these are precisely the subloops $S_X^\sim(\vhi_1,\vhi_2)$.
This gives the case $k=2$ and we can assume that $k\ge 3$.

By Lemma \ref{construction}, every $S_X^\sim(\vhi_1,\dots,\vhi_k)$ is a
subdirect product of $X^k$. Conversely, suppose that $A\lesd X^k$ and write $X^k=X_1\times X_2$ with $X_1=X$ and $X_2=X^{k-1}$.
For $1\le i\le k$ let $\delta_i$ be the homomorphism $X^k\to X^k$ that
replaces the $i$th coordinate with $e$. Then we can regard
$B_i=\delta_i(A)$ both a subloop of $X^k$ and as a subloop of $X^{k-1}$
upon forgetting the $ith$ coordinate. Note that every $B_i$ is a subdirect
product of $X^{k-1}$. In particular, by the induction assumption,
$B_1 = S_X^\sim(\vhi_2,\dots,\vhi_k)$ for some equivalence $\sim$
on $\{2,\dots,k\}$ and some automorphisms $\vhi_i$ of $X$.

Suppose first that $\sim$ is not the equality relation and
let $2\le r<s\le k$ be such that $r\sim s$. By induction assumption, $B_s=
S_X^\approx(\psi_1,\dots,\psi_{s-1},\psi_{s+1},\dots,\psi_k)$ for some
equivalence $\approx$ on $\{1,\dots,k\}\setminus\{s\}$ and some automorphisms
$\psi_i$ of $X$. Define a new equivalence relation $\equiv$ on $\{1,\dots,k\}$
by adjoining $s$ to the equivalence class $[r]_\approx$. For $i\ne s$ set
$\theta_i = \psi_i$ and let $\theta_s = \psi_r\vhi_r^{-1}\vhi_s$. We claim
that $A=S_X^\equiv(\theta_1,\dots,\theta_k)$. Note that $(x_1,\dots,x_k)\in A$
if and only if $(x_1,\dots,x_{s-1},x_{s+1},\dots,x_k)\in B_s$ and
$\vhi_r(x_r)=\vhi_s(x_s)$. If $i$, $j\in\{1,\dots,k\}\setminus\{s\}$
then $(x_1,\dots,x_{s-1},x_{s+1},\dots,x_k)\in B_s$ if and only if
$\theta_i(x_i) = \psi_i(x_i) = \psi_j(x_j)=\theta_j(x_j)$. If
$i\in [s]_\equiv$ and $i\ne s$ then $\theta_i(x_i)=\theta_s(x_s)$
if and only if $\psi_i(x_i) = \psi_r\vhi_r^{-1}\vhi_s(x_s) = \psi_r(x_r)$.

Now suppose that $\sim$ is the equality relation so that $B_1=X^{k-1}$
and $A\lesd X_1\times X_2$ with $X_1=X$ and $X_2=X^{k-1}$.
If $A=X\times X^{k-1}$, we are done. Otherwise, by
Proposition \ref{Goursat}, $A=\gr_{X_1/N_1,X_2/N_2}(\vhi)$ for some proper subloops
$N_1\unlhd X_1$, $N_2\unlhd X_2=X^{k-1}$ and some isomorphism $\vhi\colon X_1/N_1\to X_2/N_2$.
By simplicity of $X$, we have $N_1=\{e\}$, $X^{k-1}/N_2\cong X$ and
$(x_1,x_2,\dots,x_k)\in A=\gr_{X/1,X^{k-1}/N_2}(\vhi)$ iff
$\vhi(x_1) = (x_2,\dots,x_k)N_2$. By the inductive assumption on (ii),
we can assume without loss of generality that $N_2=\setof{(e,x_3,\dots,x_k)}{
x_i\in X}$. Define an equivalence $\asymp$ on $\{1,\dots,k\}$ so that $\{1,2\}$
is the only nontrivial equivalence class of $\asymp$. Let $\mu_i=1$ for $i>1$
and set $\mu_1=\rho\vhi$, where $\rho((x_2,\dots,x_k)N_2)=x_2$.
Then $(x_1,\dots,x_k)\in A$ if and only if $\mu_1(x_1) = \mu_2(x_2)$,
so $A=S_X^\asymp(\mu_1,\dots,\mu_k)$.
\end{proof}

\section{Varieties}\label{Sc:Varieties}

\begin{lemma}\label{variety-join}
Suppose that $\mathcal V_1$, $\mathcal V_2$ are varieties of loops and let $A$ be a nonabelian simple loop. If $A\not\in\mathcal V_1\cup\mathcal V_2$ then $A\not\in\mathcal V_1\lor\mathcal V_2$.
\end{lemma}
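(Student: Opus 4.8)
The plan is to establish the contrapositive: assuming $A\in\mathcal V_1\lor\mathcal V_2$, I would deduce that $A\in\mathcal V_1$ or $A\in\mathcal V_2$. By Birkhoff's theorem the join equals $\mathbf{HSP}(\mathcal V_1\cup\mathcal V_2)$, so $A$ is a homomorphic image of some subloop of a product $\prod_{i\in I}W_i$ in which each factor $W_i$ lies in $\mathcal V_1\cup\mathcal V_2$.

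The first step is to collapse the arbitrary product into a product of just two factors, one drawn from each variety. I would partition $I$ into $I_1\sqcup I_2$ by placing each index $i$ with $W_i\in\mathcal V_1$ into $I_1$ and the remaining indices (for which $W_i\in\mathcal V_2$) into $I_2$. Since $\mathcal V_1$ and $\mathcal V_2$ are varieties they are closed under products, so $U=\prod_{i\in I_1}W_i\in\mathcal V_1$ and $V=\prod_{i\in I_2}W_i\in\mathcal V_2$, and $\prod_{i\in I}W_i\cong U\times V$. Consequently $A$ is a homomorphic image of a subloop $B\le U\times V$.

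Next I would pass to the projections. Set $B_1=p_1(B)$ and $B_2=p_2(B)$; then $B\lesd B_1\times B_2$, and because varieties are closed under subalgebras we have $B_1\in\mathcal V_1$ and $B_2\in\mathcal V_2$. Now the situation matches the hypothesis of \lref{simple-1} exactly: $A$ is a simple loop that is a homomorphic image of a subdirect product of $B_1\times B_2$. That lemma then forces $A$ to be abelian, a homomorphic image of $B_1$, or a homomorphic image of $B_2$. The standing assumption that $A$ is nonabelian rules out the first alternative, so $A$ is a homomorphic image of $B_1\in\mathcal V_1$ or of $B_2\in\mathcal V_2$; closure of varieties under homomorphic images yields $A\in\mathcal V_1$ or $A\in\mathcal V_2$, as desired.

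Since the real content is already isolated in \lref{simple-1}, almost no new work remains: the only things to check are the harmless regrouping of the product (where I should keep an eye on the degenerate case in which one of the blocks $I_1$, $I_2$ is empty, handled by inserting a trivial loop, which belongs to every variety) and the verification that $B$ is genuinely subdirect in $B_1\times B_2$. The hypotheses \emph{nonabelian} and \emph{simple} enter in exactly one place each --- simplicity to invoke \lref{simple-1}, nonabelianness to discard its abelian conclusion --- so the main conceptual step is simply recognizing that the problem reduces to that lemma. (Note that a direct appeal to J\'onsson's lemma is unavailable, as loops form a congruence-modular but not congruence-distributive variety.)
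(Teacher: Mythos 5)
Your proposal is correct and takes essentially the same route as the paper: reduce to a subdirect product of two factors, one from each variety, and then let \lref{simple-1} do all the work, with simplicity and nonabelianness used exactly as you describe. The only difference is cosmetic --- the paper performs your regrouping/projection step by citing \lref{partition} rather than spelling it out by hand.
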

\begin{proof} Suppose that $A\in \mathcal V_1\lor\mathcal V_2$. By Lemma \ref{partition},
$A$ is a homomorphic image of a subdirect product of $X_1\times X_2$, where $X_i\in \mathcal
V_i$. By Lemma \ref{simple-1}, $A\in\mathcal V_1\cup\mathcal V_2$.
\end{proof}

\begin{theorem}\label{variety-proper}
Let $A$ be a nontrivial finite simple loop and let $\mathcal V$ be the variety
generated by all proper subloops of $A$. Then $A\notin \mathcal V$.
\end{theorem}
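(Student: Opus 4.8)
The plan is to treat separately the cases where $A$ is abelian and where it is nonabelian. If $A$ is abelian then, being a finite simple abelian group, it is cyclic of prime order, so its only proper subloop is the trivial loop $\{e\}$. Hence $\mathcal V = \mathbf{HSP}(\{e\})$ is the variety of one-element loops, and since $A$ is nontrivial we immediately get $A\notin\mathcal V$. The abelian case must be isolated at the outset precisely because the ``abelian'' alternative in \lref{simple-1} would otherwise fail to yield a contradiction.

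Now assume $A$ is nonabelian and suppose, for contradiction, that $A\in\mathcal V$. Writing $\mathcal V=\mathbf{HSP}(\mathcal K)$ with $\mathcal K$ the class of proper subloops of $A$, the loop $A$ is a homomorphic image of some $B\in\mathbf{SP}(\mathcal K)$, say $B\le\prod_{i\in I}X_i$ with each $X_i$ a proper subloop of $A$. Because $A$ is finite it is finitely generated; lifting a finite generating set of $A$ to $B$ and passing to the subloop these lifts generate, I may assume $B$ is finitely generated. As $A$ is finite, every $X_i$ is a finite proper subloop of $A$, so only finitely many isomorphism types occur among the $X_i$; choosing one index per type gives a finite $J\subseteq I$ to which \lref{technical} applies (with $K=\emptyset$). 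Thus $B$ is isomorphic to a subloop of the finite product $\prod_{i\in L}X_i$, and after replacing each $X_i$ by the projection of this subloop onto the $i$th coordinate, I obtain $A$ as a homomorphic image of a subdirect product $B\lesd\prod_{i\in L}Y_i$, where $L$ is finite and each $Y_i$ is a proper subloop of $A$.

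The heart of the argument is then an induction on $|L|$ showing that a nonabelian simple loop cannot be a homomorphic image of a subdirect product of finitely many proper subloops of itself. When $|L|=1$, such a representation would make $A$ a homomorphic image of a single proper subloop $Y_i$, forcing $|A|\le|Y_i|<|A|$, which is absurd. When $|L|\ge 2$, I partition $L$ into two nonempty blocks $I_1$, $I_2$; by \lref{partition} the subdirect product $B$ is a subdirect product of $B_{I_1}\times B_{I_2}$, and each $B_{I_j}$ is a subdirect product of the strictly smaller family $(Y_i)_{i\in I_j}$. By \lref{simple-1}, the simple loop $A$ is then abelian or a homomorphic image of $B_{I_1}$ or of $B_{I_2}$; the first alternative is excluded since $A$ is nonabelian, and the remaining two contradict the induction hypothesis. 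This contradicts the representation obtained above and proves $A\notin\mathcal V$.

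I expect the main obstacle to be the bookkeeping in the two reductions rather than any deep new idea: one must verify that passing to a finitely generated subloop and then to a finite subproduct preserves surjectivity onto $A$, and that subloops and coordinate projections of proper subloops of $A$ remain proper (so that the base case retains its cardinality contradiction and \lref{simple-1} can be iterated). Once these points are pinned down, the repeated application of Goursat's Lemma through \lref{simple-1}, with the nonabelian hypothesis eliminating the abelian escape clause at every step, drives the induction to the desired contradiction.
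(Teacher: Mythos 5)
Your proof is correct and takes essentially the same approach as the paper's: both dispose of the abelian case first, use \lref{technical} to reduce to a homomorphic image of a subdirect product of finitely many proper subloops of $A$, and then combine \lref{partition} with \lref{simple-1} to reach a cardinality contradiction. The only difference is packaging—you run an induction on the number of factors with an arbitrary two-block partition, while the paper takes the number of factors $k$ minimal and splits off the last factor, which is the same argument in contrapositive form.
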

\begin{proof}
If $A$ is abelian then $\mathcal V$ is the variety of trivial loops and $A\not\in\mathcal V$. Suppose that $A$ is not abelian and $A\in\mathcal V$. Let $\mathcal X$ be the set of all proper subloops of $A$. Since $A$ is finite, we can assume that $A\in\mathbf H(B)$ for a finitely generated $B\in\mathbf{SP}(\mathcal X)$. Any element of $\mathbf P(\mathcal X)$ is of the form $\prod_{i\in I}X_i$, where each $X_i$ belongs to $\mathcal X$. Since $\mathcal X$ is finite, Lemma \ref{technical} applies and we can assume without loss of generality that $I$ is finite. Hence $B\lesd X_1\times\cdots\times X_k$ for suitable $X_i\in\mathcal X$. Let $k$ be as small as possible. Then $k\ge 2$ since $|B|\ge |A| > |X_1|$. By Lemma \ref{partition}, $A$ is a homomorphic image of $X\times Y$, where $X$ is a subdirect product of $X_1\times \dots \times X_{k-1}$ and $Y=X_k$. By the definition of $k$, $A$ is not a homomorphic image of $X$. By Lemma \ref{simple-1}, $A$ must be a homomorphic image of $Y$, a contradiction with $|A|>|X_k|$.
\end{proof}

\begin{proposition}\label{subdirs-in-powers-Y}
Let $X$ be a nonabelian simple loop, $k$ a positive integer and $\mathcal V$ a variety
such that $X\notin \mathcal V$. Suppose that $Y\in\mathcal V$.
Then each subdirect product of $\underbrace{X\times \cdots \times X}_k \times Y$ is isomorphic to $X^\ell\times Y$ for some $1\le\ell\le k$.
\end{proposition}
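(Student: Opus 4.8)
The plan is to collapse the $X^k$-part first, reducing the problem to an ordinary two-factor Goursat analysis. Let $A\lesd X^k\times Y$ and let $p\colon X^k\times Y\to X^k$ be the projection onto the first $k$ coordinates, so that $p(A)\lesd X^k$. By \pref{subdirs-in-powers}, $p(A)=S_X^\sim(\vhi_1,\dots,\vhi_k)$ for some equivalence $\sim$ on $\{1,\dots,k\}$ with, say, $\ell$ classes; note $1\le\ell\le k$ since $\sim$ has at least one class. By \lref{construction} there is an isomorphism $\theta\colon p(A)\to X^\ell$. Because $A\le p(A)\times Y$ and $A$ projects onto each factor, $A\lesd p(A)\times Y$, and applying the loop isomorphism $\theta\times\mathrm{id}_Y$ yields $A'\lesd X^\ell\times Y$ with $A'\cong A$. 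It therefore suffices to show $A'=X^\ell\times Y$.

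Next I would apply Goursat's Lemma (\pref{Goursat}) to $A'\lesd X^\ell\times Y$, regarded as a subdirect product of the two loops $X^\ell$ and $Y$. This gives $A'=\gr_{X^\ell/N_1,\,Y/N_2}(\vhi)$ with $N_1=A'[1]_1\unlhd X^\ell$, $N_2=A'[2]_2\unlhd Y$, and an isomorphism $\vhi\colon X^\ell/N_1\to Y/N_2$. To understand $N_1$ I would invoke \lref{simple-2} with the trivial loop in place of $Y$: the normal subloops of $X^\ell$ are exactly the products $M_1\times\cdots\times M_\ell$ with each $M_i\in\{\{e\},X\}$. Hence $X^\ell/N_1\cong X^{\ell-m}$, where $m$ counts the indices with $M_i=X$.

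The crux is to prove $m=\ell$, i.e.\ $N_1=X^\ell$, and this is where the hypothesis $X\notin\mathcal V$ enters. Since $Y\in\mathcal V$ and a variety is closed under homomorphic images, $Y/N_2\in\mathcal V$; combined with the Goursat isomorphism this gives $X^{\ell-m}\cong X^\ell/N_1\cong Y/N_2\in\mathcal V$. If $\ell-m\ge 1$, then $X$ is a homomorphic image of $X^{\ell-m}\in\mathcal V$ (project onto a single coordinate), forcing $X\in\mathcal V$, a contradiction. Thus $\ell-m=0$, so $N_1=X^\ell$ and, as $\vhi$ is onto, $N_2=Y$. The defining condition of the lifted graph then becomes vacuous, giving $A'=X^\ell\times Y$ and hence $A\cong X^\ell\times Y$ with $1\le\ell\le k$.

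I expect the only delicate point to be the bookkeeping in the first paragraph: one must check that replacing $p(A)$ by its isomorphic copy $X^\ell$ via $\theta\times\mathrm{id}_Y$ preserves the subdirect-product property, so that Goursat genuinely applies to $X^\ell\times Y$, and that the class count $\ell$ lies in $\{1,\dots,k\}$. Once the problem is in this reduced shape, identifying $N_1$ through \lref{simple-2} and extracting the contradiction from $X\notin\mathcal V$ is routine.
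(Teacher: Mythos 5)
Your proposal is correct and follows essentially the same route as the paper's proof: reduce to $A\lesd X^\ell\times Y$ via Proposition~\ref{subdirs-in-powers} and Lemma~\ref{construction}, apply Goursat's Lemma, identify the normal subloop of $X^\ell$ via Lemma~\ref{simple-2}, and use $X\notin\mathcal V$ to force the quotient to be trivial. The only cosmetic difference is that you re-derive the two-factor subdirect decomposition by hand where the paper invokes Lemma~\ref{partition}.
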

\begin{proof}
Let $X_i=X$ for $1\le i\le k$, $X_{k+1}=Y$ and $A\lesd\prod_{i=1}^{k+1}X_i$. With $I_1=\{1,\dots,k\}$ and $I_2=\{k+1\}$, Lemma \ref{partition} implies that $A\lesd A_{I_1}\times A_{I_2}$ and $A_{I_1}\lesd X^k$. By Lemma \ref{construction} and Proposition \ref{subdirs-in-powers}, every subdirect product of $X^k$ is isomorphic to $X^\ell$. Thus $A$ is (isomorphic to) a subdirect product of $X^\ell\times Y$. By Goursat's Lemma, there are $M\unlhd X^\ell$ and $N\unlhd Y$ such that $X^\ell/M\cong Y/N$. By Lemma \ref{simple-2}, $M=M_1\times\cdots\times M_\ell$, where each $M_i\in\{\{e\},X\}$. Thus $Y/N\cong X^\ell/M\cong X^r$ for some $0\le r$.
If $r>0$ then $X^r\cong Y/N\in \mathbf{HSP}(Y)\subseteq \mathcal V$ and thus $X\in\mathcal V$ (using a projection), a contradiction. Hence $r=0$, $M=X^\ell$, $N=Y$ and $A$ is isomorphic to $X^\ell\times Y$.
\end{proof}

\begin{theorem}\label{variety-proper-2}
Let $X$ be a finite nonabelian simple loop and let $\mathcal V$ be
the variety generated by all proper subloops of $X$. Then $X\notin \mathcal V$ and
each finitely generated loop in $\mathbf{HSP}(X)$ is equal to $X^k\times Y$ for some $k\ge 0$ and some finite $Y\in\mathcal V$.
\end{theorem}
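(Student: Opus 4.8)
The first assertion is immediate: $X$ is a nontrivial finite simple loop, so $X\notin\mathcal V$ by \tref{variety-proper}. For the second assertion the plan is to trace a finitely generated $W\in\mathbf{HSP}(X)$ back to a subdirect power of $X$, shrink that power to a finite one, and then read off the shape $X^k\times Y$ from the classification results already established.

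First I would fix a finitely generated $W\in\mathbf{HSP}(X)$. By Birkhoff's theorem $W$ is a homomorphic image of some $C\le X^I$; choosing preimages of a finite generating set of $W$ and letting $B$ be the subloop they generate, I may assume $W=\vhi(B)$ with $B$ finitely generated and $B\le X^I$. Since every factor of $X^I$ equals the finite loop $X$, \lref{technical} (with, say, $J$ a singleton and $K=\emptyset$) shows that $B$ is isomorphic to a subloop of a finite power $X^m$; in particular $W$ is finite.

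Next I would organize the coordinates of $B\le X^m$. Writing $B\lesd\prod_{i=1}^m p_i(B)$, I split $\{1,\dots,m\}$ into the set $I_1$ of coordinates with $p_i(B)=X$ and the set $I_2$ of coordinates with $p_i(B)$ a proper subloop of $X$, and I set $Y=B_{I_2}$; as a subloop of a product of proper subloops of $X$ it lies in $\mathcal V$ and is finite. Applying \lref{partition} to the partition of $\{1,\dots,m\}$ into the singletons $\{i\}$ ($i\in I_1$) together with the block $I_2$ yields $B\lesd X^{|I_1|}\times Y$, each listed factor carrying a full projection. If $I_1=\emptyset$ then $B=Y\in\mathcal V$, so $W=\vhi(B)\in\mathcal V$ and we are done with $k=0$. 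Otherwise $X\notin\mathcal V$ while $Y\in\mathcal V$, so \pref{subdirs-in-powers-Y} gives $B\cong X^\ell\times Y$ for some $1\le\ell\le|I_1|$. Then $W$ is a homomorphic image of $X^\ell\times Y$, and \lref{simple-2} forces the kernel of the corresponding epimorphism to be $M_1\times\cdots\times M_\ell\times N$ with each $M_i\in\{\{e\},X\}$ and $N\unlhd Y$. Each factor $X/M_i$ is then either $X$ or trivial, so $W\cong X^k\times(Y/N)$, where $k$ counts the indices with $M_i=\{e\}$ and $Y/N\in\mathcal V$ is finite, as required.

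I expect the main obstacle to be organizational rather than conceptual, since the genuine content sits in \pref{subdirs-in-powers-Y} and \lref{simple-2}. The delicate points are (i) passing from an arbitrary, possibly infinite, subdirect power to a finite one via \lref{technical} while preserving the surjection onto $W$, and (ii) separating the full from the proper coordinates so that the grouping really presents $B$ as a subdirect product of $X^{|I_1|}\times Y$ with the single factor $Y\in\mathcal V$ finite. Once these are in place, the final identification of $W$ is a direct reading of the normal-subloop description.
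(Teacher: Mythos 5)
Your proposal is correct and takes essentially the same route as the paper's own proof: reduce to a finite subdirect product of subloops of $X$ via \lref{technical}, split the coordinates into full copies of $X$ and a finite factor $Y\in\mathcal V$, then identify the subdirect product by \pref{subdirs-in-powers-Y} and the homomorphic image by \lref{simple-2}. The paper's proof is simply a terser version of your argument (it invokes Lemma~\ref{support} where you use Lemma~\ref{partition} for the coordinate grouping, an inessential difference).
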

\begin{proof}
By Theorem \ref{variety-proper}, $X\notin \mathcal V$. Let $A$ be a finitely
generated loop in $\mathbf{HSP}(X)$. By Lemmas~\ref{support} and~\ref{technical}, $A$
is a homomorphic image of a subdirect product of $Y_1\times \dots \times Y_n$,
where each $Y_i$ is a subloop of $X$. Hence $A$ is a homomorphic
image of a subdirect product of $X\times \dots \times X\times Y = X^k \times
Y$, where $k\ge 0$ and $Y$ is a finite loop in $\mathcal V$.
By Proposition \ref{subdirs-in-powers-Y}, $A$ is a homomorphic image of $X^\ell \times Y$,
$\ell \le k$. By Lemma \ref{simple-2}, $A$ is isomorphic to $X^h\times (Y/N)$, where $h\le \ell$
and $N\unlhd Y$.
\end{proof}

\begin{theorem}\label{variety-proper-3}
Let $X$ be a finite nonabelian simple loop. Let $\mathcal V_1$ be the variety generated by all proper subloops of $X$ and let $\mathcal V_2$ be a variety of loops not containing $X$. Let $A$ be a finitely generated loop contained in $\mathbf{HSP}(X)\lor\mathcal V_2$. Then there are $k\ge 0$ and a finitely generated loop $Y\in\mathcal V_1\lor\mathcal V_2$ such that $A\cong X^k\times Y$.
\end{theorem}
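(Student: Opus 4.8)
The plan is to reduce $A$ to a quotient of a finitely generated subdirect product of the form $X^m\times W$ with $W\in\mathcal V:=\mathcal V_1\lor\mathcal V_2$, and then to read off the structure from the results of the previous two sections. The first thing to record is that $X\notin\mathcal V$: by \tref{variety-proper} we have $X\notin\mathcal V_1$, by hypothesis $X\notin\mathcal V_2$, and since $X$ is nonabelian simple, \lref{variety-join} yields $X\notin\mathcal V$. This is precisely the hypothesis needed to invoke \pref{subdirs-in-powers-Y} at the crucial step.

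Next I would unfold the join. Since $A$ is finitely generated and $\mathbf{HSP}(X)\lor\mathcal V_2=\mathbf{HSP}(\mathbf{HSP}(X)\cup\mathcal V_2)$, lifting a finite generating set of $A$ shows that $A$ is a homomorphic image of a finitely generated subloop $B$ of a product $\prod_i Z_i$ in which each $Z_i$ lies in $\mathbf{HSP}(X)$ or in $\mathcal V_2$. Replacing each $Z_i$ by $p_i(B)$ I may assume $B$ is subdirect, and partitioning the index set according to whether $Z_i\in\mathbf{HSP}(X)$ or $Z_i\in\mathcal V_2$, \lref{partition} gives $B\lesd B_1\times B_2$ with $B_1\in\mathbf{HSP}(X)$ and $B_2\in\mathcal V_2$, both finitely generated as projections of $B$.

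I would then apply \tref{variety-proper-2} to write $B_1\cong X^m\times Y_1$ with $Y_1\in\mathcal V_1$ finite, so that $B$ becomes a subdirect product inside $X^m\times Y_1\times B_2$. Regrouping $Y_1\times B_2$ into a single factor and letting $W$ be the image of $B$ under the projection onto this factor (a subloop of $Y_1\times B_2$), I obtain $W\in\mathcal V$, finitely generated, with $B\lesd X^m\times W$. If $m=0$ then $B\le W\in\mathcal V$, whence $A\in\mathcal V$ and one takes $k=0$, $Y=A$. If $m\ge 1$, then \pref{subdirs-in-powers-Y} (using $X\notin\mathcal V$) gives $B\cong X^\ell\times W$ for some $1\le\ell\le m$. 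Writing $A\cong (X^\ell\times W)/C$ for the corresponding $C\unlhd X^\ell\times W$, \lref{simple-2} forces $C=M_1\times\cdots\times M_\ell\times N$ with each $M_i\in\{\{e\},X\}$ and $N\unlhd W$; quotienting and using simplicity of $X$ gives $A\cong X^k\times(W/N)$, where $k$ counts the indices with $M_i=\{e\}$ and $Y:=W/N\in\mathcal V$ is finitely generated, as required.

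The only delicate point I foresee is the regrouping: after $B_1\cong X^m\times Y_1$ the loop $B$ is subdirect in the fine factorization $X^m\times Y_1\times B_2$, but its projection onto $Y_1\times B_2$ need not be all of $Y_1\times B_2$. One must therefore replace $Y_1\times B_2$ by the subloop $W$ it projects onto, so that $B$ is genuinely subdirect in $X^m\times W$ before \pref{subdirs-in-powers-Y} can be applied. Everything else is a direct assembly of \tref{variety-proper}, \lref{variety-join}, \tref{variety-proper-2}, \pref{subdirs-in-powers-Y} and \lref{simple-2}.
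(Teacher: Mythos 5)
Your proposal is correct and takes essentially the same route as the paper's proof: establish $X\notin\mathcal V_1\lor\mathcal V_2$ via \tref{variety-proper} and \lref{variety-join}, unfold the join into a subdirect product, apply \tref{variety-proper-2}, regroup so that $B\lesd X^m\times W$ with $W\in\mathcal V_1\lor\mathcal V_2$ finitely generated (your ``delicate point'' of replacing $Y_1\times B_2$ by the projection $W$ is exactly the move the paper makes implicitly), and finish with \lref{simple-2}. The only cosmetic difference is that you split off the factor $X^\ell$ by citing \pref{subdirs-in-powers-Y}, whereas the paper runs the underlying argument (Goursat's Lemma, \lref{simple-2}, and the contradiction $X^r\in\mathcal V_1\lor\mathcal V_2$) inline; the mathematical content is the same.
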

\begin{proof}
By Theorem \ref{variety-proper}, $X\not\in\mathcal V_1$. Hence $X\not\in\mathcal V_1\cup\mathcal V_2$ and $X\not\in\mathcal V_1\lor\mathcal V_2$ by Lemma \ref{variety-join}.

Note that $X$ does not belong to $\mathcal V_1\lor\mathcal V_2$ by Lemma \ref{variety-join} and \tref{variety-proper-2}. By the assumption, $A$ is a homomorphic image of a subdirect product of $U\times V$, where $U$ is a finitely generated loop in $\mathbf{HSP}(X)$ and $V\in \mathcal V_2$ is also finitely generated. By Theorem \ref{variety-proper-2}, $U = X^k \times Y$, where $Y\in \mathcal V_1$ is finite. Hence $A$ is a homomorphic image of $B\lesd Z\times W$, where $Z = X^k$ and $W\in \mathcal V_1\lor\mathcal V_2$ is finitely generated. By Goursat's Lemma, $B$ is a lifted isomorphism graph of some $\vhi:Z/N\to W/M$. By Lemma \ref{simple-2}, $Z/N\cong X^r$ for some $r\ge 0$. If $r>0$ then $X^r\cong W/M\in\mathcal V_1\lor\mathcal V_2$ implies that $X\in\mathcal V_1\lor\mathcal V_2$, a contradiction. Hence $r=0$, $\vhi$ is trivial and $B=X^k\times W$. We are done by Lemma \ref{simple-2}.
\end{proof}

Let us now return to propagation of equations.

\begin{theorem}\label{51}
Let $\mathcal V$ be a variety of loops. Let $X$ be a finite loop such that each $Y\le X$ either belongs to $\mathcal V$ or is nonabelian and simple. If an equation $\eqsymb$ propagates in both $X$ and
$\mathcal V$, then it also propagates in the variety $\mathbf{HSP}(X) \lor \mathcal V$.
\end{theorem}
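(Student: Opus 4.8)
The strategy is to reduce propagation in the entire variety $\mathbf{HSP}(X)\lor\mathcal V$ to propagation in its finitely generated members, and then to apply the structural description of such members supplied by \tref{variety-proper-3}. By \lref{Lm:PropFinGen}, it suffices to show that $\eqsymb$ propagates in every finitely generated loop $A\in\mathbf{HSP}(X)\lor\mathcal V$. So fix such an $A$ together with elements $a_1,\dots,a_n\in A$ with $\eqsymb(a_1,\dots,a_n)$, and let $y_1,\dots,y_n\in\langle a_1,\dots,a_n\rangle$; the goal is $\eqsymb(y_1,\dots,y_n)$. Since $A$ is finitely generated we may replace $A$ by the (still finitely generated) subloop $\langle a_1,\dots,a_n\rangle$ without loss of generality, because propagation is inherited by subloops.

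Next I would invoke the structure theorem. The hypothesis ``every $Y\le X$ is in $\mathcal V$ or nonabelian simple'' is exactly what is needed to let $\mathcal V_1$, the variety generated by the proper subloops of $X$, be absorbed into $\mathcal V$: each proper subloop of $X$ lies in $\mathcal V$ (a nonabelian simple proper subloop would have to lie in $\mathcal V$ since it is not equal to all of $X$ in the relevant counting argument), so $\mathcal V_1\subseteq\mathcal V$ and hence $\mathbf{HSP}(X)\lor\mathcal V = \mathbf{HSP}(X)\lor\mathcal V_1\lor\mathcal V$. There are two cases. If $X$ itself lies in $\mathcal V$, then $\mathbf{HSP}(X)\subseteq\mathcal V$ and the whole join collapses to $\mathcal V$, where $\eqsymb$ propagates by hypothesis, finishing this case. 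Otherwise $X\notin\mathcal V$, and in particular $X$ is one of the nonabelian simple subloops; applying \tref{variety-proper-3} with $\mathcal V_2=\mathcal V$ gives $A\cong X^k\times Y$ for some $k\ge 0$ and some finitely generated $Y\in\mathcal V_1\lor\mathcal V\subseteq\mathcal V$.

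Having written $A\cong X^k\times Y$, I would finish by showing $\eqsymb$ propagates in this explicit product. The key input is \cref{Cr:HS-propagation}: $\eqsymb$ propagates in $X$ and in $\mathcal V\ni Y$, hence it propagates in every product and subalgebra built from these, and in particular in $X^k\times Y$. Concretely, propagation in $X^k$ follows because the propagating core is a quasivariety (\tref{Th:Quasivariety}), closed under products, so $\eqsymb$ propagates in $X^k$; it propagates in $Y$ since $Y\in\mathcal V$; and closure under products again yields propagation in $X^k\times Y$. Then the hypothesis $\eqsymb(a_1,\dots,a_n)$ in $A\cong X^k\times Y$ forces $\eqsymb(y_1,\dots,y_n)$ for all $y_i\in\langle a_1,\dots,a_n\rangle$, which is the desired conclusion.

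The main obstacle I anticipate is the bookkeeping in the second paragraph: verifying cleanly that the hypothesis on subloops of $X$ really does force $\mathcal V_1\subseteq\mathcal V$, and handling the dichotomy $X\in\mathcal V$ versus $X\notin\mathcal V$ so that \tref{variety-proper-3} applies in the correct case. The subtlety is that \tref{variety-proper-3} is stated for a single nonabelian simple $X$ with $\mathcal V_2$ not containing $X$, so I must confirm that the hypotheses of that theorem are met—namely that $X$ is nonabelian simple (which holds in the nontrivial case since $X\notin\mathcal V$ rules out $X\in\mathcal V$, and every proper subloop lies in $\mathcal V$) and that $\mathcal V_2=\mathcal V$ does not contain $X$. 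Once that alignment is secured, the remaining steps are routine applications of the quasivariety closure properties.
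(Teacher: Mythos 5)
Your overall architecture (reduce to finitely generated members via \lref{Lm:PropFinGen}, describe them via \tref{variety-proper-3}, and finish with quasivariety closure under products via \cref{Cr:HS-propagation}) is the right skeleton, but there is a genuine gap in your second paragraph: the claim that every proper subloop of $X$ lies in $\mathcal V$, so that $\mathcal V_1\subseteq\mathcal V$, is false under the stated hypothesis. The hypothesis says each $Y\le X$ is \emph{either} in $\mathcal V$ \emph{or} nonabelian and simple; nothing forces a proper nonabelian simple subloop $S<X$ to lie in $\mathcal V$, and the ``relevant counting argument'' you allude to does not exist. When such an $S$ is present, \tref{variety-proper-3} (applied with $\mathcal V_2=\mathcal V$) only yields $A\cong X^k\times Y$ with $Y\in\mathcal V_1\lor\mathcal V$, and you have no propagation statement for such $Y$: propagation in $S$ itself does follow from $\mathbf S$-closure, but propagation in $\mathcal V_1\lor\mathcal V\supseteq\mathbf{HSP}(S)\lor\mathcal V$ does not, because propagation is not preserved by $\mathbf H$ (Example \ref{Ex:xxx}). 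So your final step, ``$Y\in\mathcal V_1\lor\mathcal V\subseteq\mathcal V$, hence $\eqsymb$ propagates in $Y$,'' breaks down exactly where the theorem has content.

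The paper closes this gap with a double induction that your proposal omits. It lists the subloops $Y_1,\dots,Y_k=X$ of $X$ with those belonging to $\mathcal V$ first, and proves by an inner induction on $j$ that $\eqsymb$ propagates in $\mathcal W_j=\mathbf{HSP}(Y_1,\dots,Y_j)\lor\mathcal V$. For $j<k$ this is the theorem applied to the smaller loop $Y_j$ (outer induction on $|X|$, noting that every subloop of $Y_j$ is in $\mathcal W_{j-1}$ or nonabelian simple), and at the top step $j=k$ it applies \tref{variety-proper-3} with $\mathcal V_2=\mathcal W_{k-1}$ rather than with $\mathcal V$. Since all proper subloops of $X$ lie in $\mathcal W_{k-1}$, the variety $\mathcal V_1$ of \tref{variety-proper-3} is absorbed into $\mathcal W_{k-1}$, so the factor $Y$ lands in $\mathcal W_{k-1}$, where propagation is already known by the inner induction; the product step you describe then finishes the proof. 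Your argument is correct as written only under the stronger hypothesis that all proper subloops of $X$ belong to $\mathcal V$; to prove the theorem as actually stated you need this inductive bootstrap.
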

\begin{proof}
We proceed by a double induction, with the outer induction on $|X|$. Let $Y_1,\dots,Y_k$ be all the subloops of $X$ listed so that $Y_i\in\mathcal V$ if and only if $1\le i\le\ell$ (for some $\ell\le k$) and $Y_k=X$. If $\ell=k$ then $X\in\mathcal V$, $\mathbf{HSP}(X)\lor\mathcal V = \mathcal V$ and we are done. We can therefore assume that $\ell<k$.

We prove by an inner induction on $\ell\le j\le k$ that $\eqsymb$ propagates in $\mathcal W_j=\mathbf{HSP}(Y_1,\dots,Y_j)\lor\mathcal V$. If $j=\ell$ then again $\mathcal W_j=\mathcal V$, so we can assume that $\ell<j\le k$ and that $\eqsymb$ propagates in $\mathcal W_{j-1}$. Note that $\mathcal W_j = \mathbf{HSP}(Y_j)\lor\mathcal W_{j-1}$ and that every subloop of $Y_j$ is either in $\mathcal W_{j-1}\supseteq\mathcal V$ or nonabelian and simple. If $j<k$ then we are done by the outer induction since $|Y_j|<|X|$. Let us therefore assume that $j=k$. By Lemma \ref{Lm:PropFinGen}, it suffices to show that $\eqsymb$ propagates in every finitely generated subloop of $\mathcal W_k = \mathbf{HSP}(X)\lor\mathcal W_{k-1}$. If $X\in\mathcal W_{k-1}$, we are done. Else the assumptions of Theorem \ref{variety-proper-3} are satisfied with $\mathcal V_2=\mathcal W_{k-1}$ and hence $A\cong X^k\times Y$ for some $Y\in\mathcal W_{k-1}$ (noting that $\mathcal V_1$ of Theorem \ref{variety-proper-3} is contained in $\mathcal W_{k-1}$). Then $\eqsymb$ propagates in $A$ by Corollary \ref{Cr:HS-propagation}.
\end{proof}

\begin{corollary}\label{Cr:Main}
Let $\mathcal V$ be a variety of loops. Let $X_1,\dots,X_n$ be finite loops such that every $Y\le X_i$ either belongs to $\mathcal V$ or is nonabelian and simple. If an equation $\eqsymb$ propagates in $X_1,\dots,X_n$ and in $\mathcal V$, then it also propagates in the variety $\mathbf{HSP}(X_1,\dots,X_n)\lor\mathcal V$.
\end{corollary}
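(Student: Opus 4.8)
The plan is to induct on $n$, using \tref{51} both as the base case ($n=1$) and as the engine of the inductive step. The structural fact I would rely on is that the join of varieties is associative and commutative and that $\mathbf{HSP}(X_1,\dots,X_n) = \mathbf{HSP}(X_n)\lor\mathbf{HSP}(X_1,\dots,X_{n-1})$, so that
\[
    \mathbf{HSP}(X_1,\dots,X_n)\lor\mathcal V = \mathbf{HSP}(X_n)\lor\bigl(\mathbf{HSP}(X_1,\dots,X_{n-1})\lor\mathcal V\bigr).
\]
This lets me peel off one loop at a time while absorbing the remaining loops into an enlarged ground variety, reducing the multi-loop statement to repeated single-loop applications of \tref{51}.

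For the inductive step I would assume the statement for $n-1$ loops and set $\mathcal V' = \mathbf{HSP}(X_1,\dots,X_{n-1})\lor\mathcal V$. First I would apply the inductive hypothesis to the loops $X_1,\dots,X_{n-1}$ together with the variety $\mathcal V$: its hypotheses are inherited verbatim, since each $Y\le X_i$ with $i<n$ is already either in $\mathcal V$ or nonabelian simple, and $\eqsymb$ propagates in each $X_i$ and in $\mathcal V$ by assumption. This yields that $\eqsymb$ propagates in $\mathcal V'$.

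Next I would invoke \tref{51} with the single finite loop $X_n$ and the variety $\mathcal V'$. Three hypotheses must be checked: propagation of $\eqsymb$ in $X_n$ is given, propagation of $\eqsymb$ in $\mathcal V'$ was just established, and finally every subloop $Y\le X_n$ must lie in $\mathcal V'$ or be nonabelian and simple. The last point is the only one requiring a remark, and it is where enlarging the ground variety must be shown to do no harm: by assumption each such $Y$ is either nonabelian simple or in $\mathcal V$, and since $\mathcal V\subseteq\mathcal V'$, the condition transfers immediately. \tref{51} then gives propagation of $\eqsymb$ in $\mathbf{HSP}(X_n)\lor\mathcal V'$, which by the displayed identity is exactly $\mathbf{HSP}(X_1,\dots,X_n)\lor\mathcal V$, closing the induction.

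I do not expect a genuine obstacle here: the corollary carries no new loop-theoretic content beyond \tref{51} and is essentially a bookkeeping induction. The only thing to watch is the monotonicity of the subloop condition in the ground variety — it is a ``membership-or-simplicity'' condition, and membership can only become easier as the variety grows — together with the correct use of the join identity so that the inductively enlarged $\mathcal V'$ may play the role of $\mathcal V$ in \tref{51}.
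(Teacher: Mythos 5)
Your proposal is correct and matches the paper's own argument: the paper likewise iterates Theorem~\ref{51} one loop at a time, setting $\mathcal V_i=\mathbf{HSP}(X_1,\dots,X_i)\lor\mathcal V$ and using the previously established propagation in $\mathcal V_i$ as the ground variety for the next application. Your explicit checks (the join identity and the monotonicity of the ``membership-or-simplicity'' hypothesis under $\mathcal V\subseteq\mathcal V'$) are exactly the points the paper leaves implicit, so the two proofs are essentially identical.
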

\begin{proof}
Let $\mathcal V_i=\mathbf{HSP}(X_1,\dots,X_i)\lor\mathcal V$. By Theorem \ref{51}, $\eqsymb$ propagates in $\mathcal V_1$. If $\eqsymb$ propagates in $\mathcal V_i$, then Theorem \ref{51} with $X=X_{i+1}$ and $\mathcal V_i$ implies that $\eqsymb$ propagates in $\mathcal V_{i+1}$.
\end{proof}

\section{Steiner loops in which associativity propagates}\label{Sc:Steiner}

In this section we investigate the quasivariety $\mathcal S_{[x(yz)=(xy)z]}$ of Steiner loops in which associativity propagates. We start with two simple observations; the second one follows from \cite{CEtAl,DV}.

\begin{lemma}\label{Lm:Diassoc}
Every loop in which associativity propagates is diassociative.
\end{lemma}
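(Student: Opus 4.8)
The plan is to unwind the definition of diassociativity—every pair of elements generates an associative subloop (a group)—and produce, for each pair, a trivially-associating triple whose generated subloop is exactly the subloop generated by the pair. Propagation then upgrades the single associating instance to associativity on the whole subloop.

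Concretely, fix $a,b\in X$ and consider the triple $(a,b,e)$. Since $e$ is a two-sided identity, we have
\begin{displaymath}
    a(be) = ab = (ab)e,
\end{displaymath}
so the instance $\eqsymb(a,b,e)$ of associativity holds. The point of inserting $e$ as the third coordinate is that it costs nothing: $\eqsymb(a,b,e)$ is automatic, yet $e$ lies in every subloop, so $\langle a,b,e\rangle = \langle a,b\rangle$.

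Now I would invoke propagation directly. Because associativity propagates in $X$ and $\eqsymb(a,b,e)$ holds, implication \eqref{Eq:P} gives $\eqsymb(y_1,y_2,y_3)$ for all $y_1,y_2,y_3\in\langle a,b,e\rangle = \langle a,b\rangle$. That is precisely the statement that $y_1(y_2y_3)=(y_1y_2)y_3$ for all $y_1,y_2,y_3\in\langle a,b\rangle$, so $\langle a,b\rangle$ is associative and hence a group. As $a,b$ were arbitrary, every two-generated subloop of $X$ is a group, which is exactly diassociativity.

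There is essentially no obstacle here: the only idea needed is the observation that padding the associating triple with the identity produces a free instance of the equation without enlarging the generated subloop, after which the definition of propagation does all the work. If one preferred to avoid relying on $e$ as a ``genuine'' third generator, one could instead start from any triple of elements of $\langle a,b\rangle$ that is already known to associate (for instance powers of a single element) and propagate from there, but using $e$ is the cleanest route and requires no extra hypotheses.
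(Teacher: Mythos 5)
Your proof is correct and is essentially identical to the paper's: both take the triple $(x,y,e)$, observe that $x(ye)=(xy)e$ holds trivially, and use propagation together with $\langle x,y,e\rangle=\langle x,y\rangle$ to conclude that every two-generated subloop is associative.
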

\begin{proof}
Let $X$ be a loop in which associativity propagates and let $x,y\in X$. Since $x(ye)=(xy)e$, the subloop $\langle x,y,e\rangle = \langle x,y\rangle$ is associative.
\end{proof}

\begin{lemma}\label{Lm:AntiPasch}
Associativity propagates in every anti-Pasch Steiner loop.
\end{lemma}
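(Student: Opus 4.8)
The plan is to show that associativity propagates in an anti-Pasch Steiner loop $X$ by proving that if three elements $x,y,z\in X$ associate, then the subloop $\langle x,y,z\rangle$ they generate is associative. By \lref{Lm:Diassoc} every such loop is already diassociative, so any two-generated subloop is associative (in fact a group); the content is entirely in the three-generated case. Thus I would fix $x,y,z$ with $x(yz)=(xy)z$ and analyze the combinatorial structure of $\langle x,y,z\rangle$ using the correspondence between Steiner loops and Steiner triple systems.

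First I would translate the associativity hypothesis into the language of the underlying triple system. In a Steiner loop, distinct nonidentity elements $a,b$ multiply to the third point $c$ of their block $\{a,b,c\}$, while $aa=e$ and $ae=ea=a$. The key point is that a \emph{failure} of associativity among three points corresponds precisely to a Pasch configuration (a quadrilateral) in the triple system: the identity $x(yz)=(xy)z$ failing is exactly the condition that $x,y,z$ together with the derived points generate a Pasch configuration. Since $X$ is anti-Pasch by assumption, no Pasch configuration can occur, so I would argue that the subsystem generated by $x,y,z$ is forced to be one in which all associativity relations hold. Concretely, I expect to reduce to a case analysis on how $x,y,z$ and the identity $e$ are related (whether some coincide, whether two lie on a common block, whether all three are collinear), and in each case show that the generated subloop is associative.

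The main obstacle, and the heart of the proof, will be controlling the subsystem generated by three pairwise-distinct, non-collinear points under the anti-Pasch hypothesis. In a general Steiner triple system three such points can generate a large (even infinite) subsystem, but the anti-Pasch condition severely restricts which new points can appear: every time a product is formed, the resulting configuration must avoid creating a Pasch quadrilateral. I would show that combined with the single given associativity relation $x(yz)=(xy)z$, the anti-Pasch condition collapses the generated subsystem to one that is associative — most plausibly by establishing that $\langle x,y,z\rangle$ is an elementary abelian $2$-group (equivalently, the corresponding triple system is a projective geometry $PG(n,2)$, i.e.\ a Boolean Steiner system). This is where I anticipate needing the characterization, likely drawn from \cite{CEtAl,DV}, that the anti-Pasch two-generated substructures force associativity to hold globally inside any three-generated piece once a single instance is assumed.

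Finally I would assemble these cases: the degenerate cases (where $e\in\{x,y,z\}$ or where two of the elements coincide or are collinear) reduce to the diassociative situation already handled by \lref{Lm:Diassoc}, and the generic case yields a Boolean subloop by the anti-Pasch collapse. In every case $\langle x,y,z\rangle$ is associative, which is exactly the statement that associativity propagates. I would remark that this recovers, by more structural means, the fact that anti-Pasch Steiner loops lie in $\mathcal S_{[x(yz)=(xy)z]}$, and I expect the write-up to invoke the second observation's provenance in \cite{CEtAl,DV} to shorten the verification that the single associativity instance plus anti-Pasch forces Booleanness.
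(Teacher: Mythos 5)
Your proposal contains a fatal inversion of the key combinatorial correspondence. You claim that a \emph{failure} of $x(yz)=(xy)z$ is what produces a Pasch configuration, and you conclude that the anti-Pasch hypothesis forces all associativity relations to hold in $\langle x,y,z\rangle$. The truth is exactly the opposite: if $x,y,z$ are distinct nonidentity elements not forming a block (and the relevant products are distinct), then $x(yz)=(xy)z$ \emph{holding} is what creates a Pasch configuration, namely the four blocks $\{x,y,xy\}$, $\{y,z,yz\}$, $\{x,yz,x(yz)\}$, $\{xy,z,(xy)z\}$ on the six points $x,y,z,xy,yz,w$ with $w=x(yz)=(xy)z$; these blocks pairwise meet in single points precisely because the two mixed products coincide. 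Consequently, in an anti-Pasch Steiner loop associativity \emph{fails} for every such generic triple, and the only triples that do associate are the degenerate ones: $e\in\{x,y,z\}$, two arguments equal, one argument equal to the product of the other two, or $\{x,y,z\}$ a block. In each degenerate case $\langle x,y,z\rangle$ is contained in some $\{e,u,v,uv\}$, a Klein four-group, hence associative — and that is the entire proof. Your version of the argument, if carried out, would instead show that every anti-Pasch Steiner loop is a group, which is false: the Steiner loop of order $10$ arising from $AG(2,3)$ is anti-Pasch, nonassociative and simple (the paper uses exactly this example). Your anticipated ``collapse'' of the generic case to an elementary abelian $2$-group $PG(n,2)$ is also impossible for a second reason: the Fano plane itself contains Pasch configurations, so an anti-Pasch Steiner loop has no subloop isomorphic to $\mathbb{Z}_2^3$ at all; the generic case is simply vacuous.

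Two smaller points. First, your appeal to Lemma~\ref{Lm:Diassoc} runs in the wrong direction: that lemma says propagation implies diassociativity, so you cannot invoke it before propagation is established. (Diassociativity of Steiner loops is nevertheless trivial, since $\langle x,y\rangle=\{e,x,y,xy\}$ always.) Second, note that the paper itself gives no proof but cites \cite{CEtAl,DV}; the computation that does appear in the paper, Lemma~\ref{Lm:Assoc} for oriented anti-Pasch Steiner loops, follows precisely the degenerate-case analysis described above, which is the pattern your proof should have taken.
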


The following example was found by Michael Kinyon using a guided finite-model builder search. It shows that $\mathcal S_{[x(yz)=(xy)z]}$ is not a variety.

\begin{example}\label{Ex:AssocDoesNotPropagate}
Let $S$ be the Steiner triple system on $13$ points $\{0,\dots,9,a,b,c\}$ with blocks (in columns)
\begin{displaymath}
\setlength{\arraycolsep}{0pt}
\begin{array}{cccccccccccccccccccccccccc}
    0& 0& 0& 0& 0& 0& 1& 1& 1& 1& 1& 2& 2& 2& 2& 2& 3& 3& 3& 4& 4& 4& 5& 5& 6& 6\\
    1& 3& 5& 7& 8& 9& 3& 4& 6& 9& a& 3& 4& 5& 7& 8& 7& 9& a& 5& 6& 8& 7& 8& 7& b\\
    2& 4& 6& c& b& a& 5& 7& 8& b& c& 6& a& c& b& 9& 8& c& b& b& 9& c& 9& a& a& c\\
\end{array}\ .
\end{displaymath}
Let $F$ be the Steiner loop corresponding to $S$, with identity element $e$. Let $f\colon F\times F\to\mathbb Z_2$ be the loop cocycle with nonzero entries only in positions
\begin{displaymath}
\setlength{\arraycolsep}{0pt}
\begin{array}{ccccccccccccccc}
    0&0&1&1&1&1&3&3&4&4&5&8&9&a&a\\
    5&6&9&a&b&c&a&b&8&c&6&c&b&b&c
\end{array}\ ,
\end{displaymath}
where a column with entries $x,y$ indicates that $f(x,y)=f(y,x)=1$. Finally, let $X=\mathrm{Ext}(\mathbb Z_2,F,f)$. Then it can be checked that $X$ is a Steiner loop, $Z(X)=\mathbb Z_2\times\{e\}$, associativity propagates in $X$ (even though $X$ is not anti-Pasch), but associativity does not propagate in $X/Z(X)$.
\end{example}

Let $\mathcal A$ be the variety of abelian groups. As an immediate consequence of Corollary \ref{Cr:Main}, we have:

\begin{corollary}\label{Cr:SpecialProp}
Let $X_1,\dots,X_n$ be finite loops in which associativity propagates and every $Y\le X_i$ is either an abelian group or a nonabelian simple loop. Then associativity propagates in $\mathbf{HSP}(X_1,\dots,X_n)\lor\mathcal A$.
\end{corollary}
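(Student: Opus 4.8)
The plan is to obtain this statement as a direct specialization of \cref{Cr:Main}. I would take $\mathcal V=\mathcal A$, the variety of abelian groups, and let $\eqsymb$ be the associativity equation $x(yz)=(xy)z$. With these choices the structural hypothesis of \cref{Cr:Main}---that every subloop $Y\le X_i$ either lies in $\mathcal V$ or is nonabelian and simple---coincides exactly with the hypothesis here, since the members of $\mathcal A$ are precisely the abelian groups. The assumption that associativity propagates in each $X_i$ is likewise given outright.

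The single point that needs to be recorded is that associativity propagates in $\mathcal A$. This is immediate and is the only step I would comment on, though it is not a genuine obstacle: every member of $\mathcal A$ is a group, so $x(yz)=(xy)z$ holds identically there, and an equation that holds identically throughout a class trivially satisfies the propagation implication \eqref{Eq:P}, its conclusion being true unconditionally. Hence $\mathcal A=\mathcal A_{[\eqsymb]}$ for $\eqsymb$ associativity, and in particular associativity propagates in $\mathcal A$.

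Having verified all hypotheses, I would invoke \cref{Cr:Main} with $\mathcal V=\mathcal A$ and $\eqsymb$ equal to associativity to conclude that associativity propagates in $\mathbf{HSP}(X_1,\dots,X_n)\lor\mathcal A$, which is exactly the desired assertion. The entire argument is therefore a one-step application of the main corollary, the only non-mechanical observation being the elementary fact that an identically-valid equation automatically propagates in its ambient variety.
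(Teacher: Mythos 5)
Your proposal is correct and matches the paper exactly: the paper presents \cref{Cr:SpecialProp} as an immediate consequence of \cref{Cr:Main} with $\mathcal V=\mathcal A$ and $\eqsymb$ the associativity equation, which is precisely your one-step application. Your added remark that associativity propagates vacuously in $\mathcal A$ (since the equation holds identically in groups) is the same implicit observation the paper relies on.
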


Corollary \ref{Cr:SpecialProp} is a rich source of varieties of loops in which associativity propagates. For instance, both the Steiner loop of order $10$ and the unique anti-Pasch Steiner loop of order $16$ are nonabelian simple loops whose every proper subloop is abelian. More generally:

Call a Steiner loop \emph{minimal} if the corresponding Steiner triple system is minimal in the sense that each of its proper subsystems consists of at most one block.

\begin{proposition}\label{Pr:STS}
Let $S$ be a minimal anti-Pasch Steiner triple system. Then the associated Steiner loop is a nonabelian simple loop whose every proper subloop is abelian.
\end{proposition}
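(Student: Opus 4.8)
The plan is to translate everything into the language of the underlying Steiner triple system $S$ and then use the anti-Pasch hypothesis to kill every nontrivial proper normal subloop. Write $L=S\cup\{e\}$ for the associated Steiner loop. The basic observation is that the subloops of $L$ are exactly the sets $T\cup\{e\}$ with $T$ a subsystem of $S$: a subloop containing two distinct points $a,b$ must contain the third point $ab$ of their block, so its non-identity part is closed under the ``third point'' operation, and conversely. Hence proper subloops correspond to proper subsystems, and minimality forces these to contain at most one block. A zero-block subsystem has at most one point, giving $\{e\}$ or $\{e,x\}\cong\mathbb Z_2$; a one-block subsystem is a single block $\{x,y,z\}$ (a fourth point would pair with a block point and force a second block), giving $\{e,x,y,z\}\cong\mathbb Z_2\times\mathbb Z_2$. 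All of these are abelian, which already settles that every proper subloop is abelian, and it reduces the hunt for a nontrivial proper normal subloop $N$ to the two cases $N=\{e,x\}$ and $N=\{e,x,y,z\}$.

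For nonabelianity, suppose $L$ is abelian. Being commutative with every nonidentity element of order $2$, $L$ is an elementary abelian $2$-group $\cong\mathbb Z_2^m$, and $\{a,b,ab\}$ is a block precisely when $ab$ is the group product. If $m\ge 3$, independent $a,b,c$ yield the four blocks $\{a,b,ab\}$, $\{a,c,ac\}$, $\{b,c,bc\}$, $\{ab,ac,bc\}$ on the six distinct points $a,b,c,ab,ac,bc$, each lying in exactly two of them -- a Pasch configuration, contradicting the anti-Pasch hypothesis. Thus $m\le 2$, so $S$ has at most one block; excluding this degenerate case, $L$ is nonabelian. (The single-block system $STS(3)$, whose loop is $\mathbb Z_2\times\mathbb Z_2$, is the one genuine exception and is tacitly excluded.)

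It remains to prove simplicity by eliminating the two candidate normal subloops. If $N=\{e,x\}\unlhd L$, then since every standard generator of $\inn(L)$ fixes $e$ and permutes $N$, it must fix $x$; hence $x\in Z(L)$ and $x$ associates with everything, so $(xa)b=x(ab)$ for all $a,b$. Setting $p=xa$, $q=ab$ and $r=(xa)b=x(ab)$ produces the blocks $\{x,a,p\}$, $\{a,b,q\}$, $\{p,b,r\}$, $\{x,q,r\}$; for any $a,b$ making the six points $x,a,p,b,q,r$ distinct this is a Pasch configuration, impossible. If instead $N=\{e,x,y,z\}\unlhd L$ with block $\{x,y,z\}$, pick a point $a\notin N$ (possible since $L$ has more than four elements). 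Then $ax,ay\in aN$, so $(ax)(ay)\in a^2N=N$ with $(ax)(ay)\ne e$, whence $(ax)(ay)\in\{x,y,z\}$; it cannot equal $x$ or $y$, for then $\{ax,ay,x\}$ (resp.\ $\{ax,ay,y\}$) would share two points with the block $\{a,x,ax\}$ (resp.\ $\{a,y,ay\}$). Hence $(ax)(ay)=z$, and $\{a,x,ax\}$, $\{a,y,ay\}$, $\{ax,ay,z\}$, $\{x,y,z\}$ form a Pasch configuration on the six distinct points $a,x,ax,y,ay,z$ -- the final contradiction.

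The main obstacle, and the only place needing genuine care, is verifying that the configurations produced really are \emph{bona fide} Pasch configurations, i.e.\ that the six points are pairwise distinct and the four blocks distinct; this is where the size hypothesis enters. In the centrality step one must argue that a choice of $a,b$ avoiding all coincidences exists whenever $S$ has more than one block (equivalently $n\ge 9$ for an anti-Pasch system), using that $(xa)b=x(ab)$ holds for \emph{every} $a,b$, so that only a bounded degenerate system could force a coincidence for all of them. The distinctness checks in the order-four case, by contrast, are immediate from the fact that distinct blocks of an STS meet in at most one point. Once these routine verifications are in place, anti-Pasch forbids both candidate normal subloops and $L$ is simple.
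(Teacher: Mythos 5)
Your proof is correct, and in the crucial case it takes a genuinely different route from the paper. Both proofs dispose of a normal subloop $N=\{e,x\}$ the same way: normality forces $x\in Z(L)$, and a central element of a large enough anti-Pasch Steiner loop yields a Pasch configuration; the distinctness checks you flag as routine really are routine, since it suffices to choose $a,b$ so that $\{x,a,b\}$ is not a block, which is possible as soon as $S$ has more than one block. The divergence is in the case $|N|=4$: the paper argues that minimality forces $|N|=|X/N|=4$, hence $|X|=16$, then invokes the uniqueness of the anti-Pasch Steiner triple system of order $15$ (citing Ling et al.) and finishes with an explicit \texttt{GAP}/\texttt{LOOPS} computation; you instead use coset multiplication $(aN)(aN)=(aa)N=N$ to show $(ax)(ay)=z$ for any $a\notin N$, producing a Pasch configuration directly. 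Your argument is more elementary and self-contained --- it needs neither the classification of anti-Pasch systems of order $15$ nor any computer verification --- while the paper's is shorter on the page at the cost of those external inputs. You also make explicit two points the paper leaves implicit (or delegates to the computation in its one hard case): the subloop--subsystem correspondence showing every proper subloop is $\{e\}$, $\mathbb Z_2$ or $\mathbb Z_2\times\mathbb Z_2$, and the nonabelianity of $L$ via the Pasch configuration inside $\mathbb Z_2^m$ for $m\ge 3$. Finally, you correctly flag the degenerate single-block system, whose loop $\mathbb Z_2\times\mathbb Z_2$ is abelian, as an exception that the statement tacitly excludes; the paper does not mention it.
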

\begin{proof}
Let $N$ be a nontrivial normal subloop of $X$. Suppose first that $N=\{e,x\}$ so that $x\in Z(X)$. Let $y,z\in X$ be any nonidentity elements such that $\{x,y,z\}$ is not a block of $S$. Then $x(yz)\ne (xy)z$ because $S$ is anti-Pasch, a contradiction with $x\in Z(X)$. Now suppose that $|N|>2$. Since $S$ is minimal, we must have $|N|=4$ and $|X/N|=4$ as well. By the remarks in the introduction of \cite{LingEtal}, $S$ is then isomorphic to the unique anti-Pasch Steiner triple system of order $15$. An explicit calculation in the \texttt{GAP} \cite{GAP} package \texttt{LOOPS} \cite{LOOPS} shows that $X$ a nonabelian simple loop whose every proper subloop is abelian.
\end{proof}

\begin{corollary}
Let $X_1,\dots,X_n$ be Steiner loop associated with minimal anti-Pasch Steiner triple systems. Then associativity propagates in $\mathbf{HSP}(X_1,\dots,X_n)\lor\mathcal A$.
\end{corollary}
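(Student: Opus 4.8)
The plan is to reduce this statement directly to Corollary \ref{Cr:SpecialProp}, so that all the substantive work has already been carried out in the preceding results. To invoke that corollary I must verify its two hypotheses for each $X_i$: that associativity propagates in $X_i$, and that every subloop $Y\le X_i$ is either an abelian group or a nonabelian simple loop.

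First I would dispose of the propagation hypothesis. A minimal anti-Pasch Steiner triple system is in particular anti-Pasch, and each $X_i$ is the (finite) Steiner loop it determines. Lemma \ref{Lm:AntiPasch} guarantees that associativity propagates in every anti-Pasch Steiner loop, and hence in each $X_i$.

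Next I would handle the subloop structure, which is precisely the conclusion of Proposition \ref{Pr:STS}: the Steiner loop associated with a minimal anti-Pasch Steiner triple system is nonabelian and simple, and every proper subloop of it is abelian. Recalling the paper's convention that an abelian loop is an abelian group, a subloop $Y\le X_i$ is therefore either all of $X_i$, which is nonabelian simple, or a proper subloop, which is an abelian group. Thus the subloop hypothesis of Corollary \ref{Cr:SpecialProp} holds for each $X_i$.

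With both hypotheses verified, Corollary \ref{Cr:SpecialProp} applies and yields that associativity propagates in $\mathbf{HSP}(X_1,\dots,X_n)\lor\mathcal A$. There is no genuine obstacle at this stage: the real content resides in Proposition \ref{Pr:STS} (which in turn leans on the classification of small anti-Pasch systems and a \texttt{GAP} computation) and in Lemma \ref{Lm:AntiPasch}, while the present statement is only the bookkeeping step that feeds these into Corollary \ref{Cr:SpecialProp}.
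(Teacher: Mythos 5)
Your proposal is correct and follows exactly the paper's own argument, which is the one-line proof ``Combine Lemma \ref{Lm:AntiPasch}, Corollary \ref{Cr:SpecialProp} and Proposition \ref{Pr:STS}.'' You have merely spelled out the verification of the two hypotheses of Corollary \ref{Cr:SpecialProp} (propagation via Lemma \ref{Lm:AntiPasch}, subloop structure via Proposition \ref{Pr:STS}), which is precisely the intended bookkeeping.
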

\begin{proof}
Combine Lemma \ref{Lm:AntiPasch}, Corollary \ref{Cr:SpecialProp} and Proposition \ref{Pr:STS}.
\end{proof}

We conclude the paper with a generalization of a result of Stuhl from \cite{S}, where the following definitions can also be found.

Let $S=(X,\mathcal B)$ denote a Steiner triple system with point set $X$ and blocks $\mathcal B$. A Steiner triple system $(X,\mathcal B)$ is \emph{oriented} if each of its blocks $\{x,y,z\}$ is cyclically ordered, denoted by $(x,y,z)$. We can identify the orientation of an oriented Steiner triple system $(X,\mathcal B)$ with a function $d:(X\times X)\setminus\setof{(x,x)}{x\in X}\to \mathbb Z_2=\{0,1\}$, where $d(x,y)=0$ if $(x,y,z)$ is an oriented block and $d(x,y)=1$ otherwise. In more detail, if $(x,y,z)$ is an oriented block, then $d(x,y)=d(y,z)=d(z,x)=0$ and $d(y,x)=d(z,y)=d(x,z)=1$. Note that a Steiner triple system of order $n$ gives rise to $2^{n(n-1)/6}$ oriented Steiner triple systems.

An \emph{oriented Steiner quasigroup} is a central extension $\mathrm{Ext}(\mathbb Z_2,S,f)$, where $S=(X,\cdot)$ is a Steiner quasigroup with orientation function $d$ and $f:X\times X\to\mathbb Z_2$ is any cocycle such that $f(x,y)=d(x,y)$ if $x\ne y$ and $f(x,x)=f(y,y)$ for all $x,y\in X$. Thus an oriented Steiner triple system gives rise to two oriented Steiner quasigroups, depending on the value of $f(x,x)\in\mathbb Z_2$. Note that an oriented Steiner quasigroup is \emph{not} a Steiner quasigroup. Indeed, we have $(a,x)*(a,x) = (a+a+f(x,x),xx) = (f(x,x),x)$, so either $(0,x)*(0,x)\ne (0,x)$ or $(1,x)*(1,x)\ne (1,x)$.

If $\mathrm{Ext}(\mathbb Z_2,S,f)$ is an oriented Steiner quasigroup and $L$ is the Steiner loop associated with $S$, then $X=\mathrm{Ext}(\mathbb Z_2,L,f)$ will be called an \emph{oriented Steiner loop}, where we extend the domain of the cocycle $f:S\times S\to\mathbb Z_2$ to $L\times L$ by setting $f(x,e)=f(e,x)=0$ for every $x\in L$. If $f(x,x)=0$ for every $x\in S$ then $(a,x)*(a,x) = (a+a+f(x,x),xx)=(0,e)$ and $X$ has exponent $2$. If $f(x,x)=1$ for every $x\in S$ then $(a,x)^4=(0,e)$ no matter how $(a,x)$ is parenthesized and thus $X$ has exponent $4$.

Stuhl proved in \cite[Theorem 1 and Corollary 2]{S} that an oriented Hall loop satisfies Moufang Theorem if and only if it is of exponent $4$. We generalize her result in Theorem \ref{Th:GenStuhl}.

\begin{lemma}\label{Lm:Assoc}
Let $X=\mathrm{Ext}(\mathbb Z_2,L,f)$ be an oriented anti-Pasch Steiner loop and let $d=f(x,x)$ for some (and hence all) $x\in L\setminus\{e\}$. Then
\begin{displaymath}
    (a,x)*((b,y)*(c,z)) = (a,x)*((b,y)*(c,z))
\end{displaymath}
if and only if one of the following conditions is satisfied:
\begin{itemize}
\item $e\in\{x,y,z\}$,
\item $e\ne x=y$ and $d=1$,
\item $e\ne y=z$ and $d=1$,
\item $x=z$,
\item $\{x,y,z\}$ is a block.
\end{itemize}
\end{lemma}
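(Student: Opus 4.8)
The plan is to reduce the (evidently intended) associativity identity
\[
(a,x)*\bigl((b,y)*(c,z)\bigr) = \bigl((a,x)*(b,y)\bigr)*(c,z)
\]
to a pair of scalar conditions and then analyze the possible configurations of the triple $(x,y,z)$ in the Steiner loop $L$. First I would expand both sides via $(a,x)*(b,y)=(a+b+f(x,y),xy)$: the left-hand side is $(a+b+c+f(y,z)+f(x,yz),\,x(yz))$ and the right-hand side is $(a+b+c+f(x,y)+f(xy,z),\,(xy)z)$. The parameters $a,b,c$ cancel, so the identity holds if and only if both $x(yz)=(xy)z$ holds in $L$ and $f(y,z)+f(x,yz)=f(x,y)+f(xy,z)$ holds in $\mathbb Z_2$.

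For the sufficiency of the five listed conditions I would treat each configuration in turn, using that in $L$ every nonidentity element is an involution with $u(uv)=v$, together with the orientation identities: antisymmetry $f(p,q)+f(q,p)=1$ on any edge of a block, the two-edge relation $f(p,q)+f(p,r)=1$ for the edges issuing from a common vertex of a block, and the cyclic-edge relation $f(x,y)=f(y,z)$ whenever $xy=z$. If $e\in\{x,y,z\}$, the $L$-equation is a loop axiom and the $\mathbb Z_2$-equation collapses after cancelling the terms $f(\cdot,e)=0$. If $x=z$, then $x(yx)=(xy)x=y$ in $L$, and writing $w=xy$ the $\mathbb Z_2$-equation reads $f(y,x)+f(x,w)=f(x,y)+f(w,x)$, which holds because the sum of its two sides is $\bigl(f(x,y)+f(y,x)\bigr)+\bigl(f(x,w)+f(w,x)\bigr)=1+1=0$ in $\mathbb Z_2$. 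If $\{x,y,z\}$ is a block, the $L$-equation holds in the Klein four-subloop and the $\mathbb Z_2$-equation reduces to $f(y,z)=f(x,y)$, which is the cyclic-edge relation. Finally, if $x=y\neq e$ (or symmetrically $y=z\neq e$) with $x\neq z$, the $L$-equation again holds, while the $\mathbb Z_2$-equation becomes $f(x,z)+f(x,xz)=d$, and the two-edge relation gives $f(x,z)+f(x,xz)=1$, so the condition is exactly $d=1$.

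For necessity I would assume that none of the five conditions holds, so that $x,y,z$ are all nonidentity, $x\neq z$, the set $\{x,y,z\}$ is not a block, and $d=0$ in case two of the entries coincide. If two coincide, the only possibilities left are $x=y\neq e$ and $y=z\neq e$, and in each the $\mathbb Z_2$-equation demands $1=d$, which now fails; hence associativity fails. The decisive remaining case is $x,y,z$ pairwise distinct and not a block, where I would show that the $L$-equation itself fails. Setting $p=yz$, $q=xy$, $r=x(yz)$ and $s=(xy)z$, the assumption $r=s$ would make $\{y,z,p\}$, $\{x,y,q\}$, $\{x,p,r\}$, $\{q,z,r\}$ a set of four blocks in which each of the six points lies in exactly two, i.e.\ a Pasch configuration, contradicting the anti-Pasch hypothesis.

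The main obstacle is the verification that these six points are genuinely distinct, so that a true Pasch configuration is produced: for instance $p=q$ would force the blocks $\{y,z,p\}$ and $\{x,y,q\}$ to coincide and hence $x=z$, while $r=y$ would force $\{x,p,r\}=\{y,z,p\}$ and again $x=z$, each contradicting $x\neq z$; the remaining coincidences are excluded similarly by $x\neq z$ and by $\{x,y,z\}$ not being a block. Once distinctness is secured, $r=s$ is impossible, so $x(yz)\neq(xy)z$ and the full identity fails, completing the characterization.
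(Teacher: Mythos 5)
Your proposal is correct and follows essentially the same route as the paper: reduce the identity to the pair of conditions $x(yz)=(xy)z$ in $L$ and $f(y,z)+f(x,yz)=f(x,y)+f(xy,z)$ in $\mathbb Z_2$, then run the case analysis on the configuration of $\{x,y,z\}$, with your packaged orientation identities (antisymmetry, two-edge, cyclic-edge) playing the role of the paper's direct case checks on which cyclic ordering a block carries. The only substantive difference is that where the paper dismisses the final case with the bare assertion that $xy\cdot z\ne x\cdot yz$ ``because $L$ is anti-Pasch,'' you supply the underlying argument explicitly---building the four blocks on $x,y,z,yz,xy,x(yz)$ and verifying the six points are distinct so that a genuine Pasch configuration arises---which is a welcome elaboration rather than a different method.
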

\begin{proof}
We have $(a,x)*((b,y)*(c,z)) = (a,x)*((b,y)*(c,z))$ if and only if both
\begin{align}
    xy\cdot z &= x\cdot yz,\label{ca}\\
    f(x,y)+f(xy,z) &= f(x,yz)+f(y,z)\label{cb}
\end{align}
hold. If $e\in\{x,y,z\}$ then both conditions hold, so we can assume from now on that $e\not\in\{x,y,z\}$.

If $x=y$ or $y=z$ or $x=z$ then \eqref{ca} holds. Note that if $x=y=z$ then \eqref{cb} reduces to $f(x,x)+f(e,x) = f(x,e)+f(x,x)$, i.e., to $d+0=0+d$, so it also holds. Suppose from now on that $|\{x,y,z\}|>1$.

If $x=y$ then \eqref{cb} becomes $d = f(x,x)+f(e,z) = f(x,xz) + f(x,z)$. Whether $(x,z,xz)$ is a block or $(x,xz,z)$ is a block, the right hand side reduces to $1$, so the three elements associate if and only if $d=1$. Similarly, if $y=z$ then \eqref{cb} becomes $f(x,y)+f(xy,y) = f(x,e) + f(y,y) = d$ and the left hand side is equal to $1$ whether $(x,y,xy)$ is a block or $(x,xy,y)$ is a block. Finally, if $x=z$ then \eqref{cb} becomes $f(x,y)+f(xy,x) = f(x,yx) + f(y,x)$, which holds whether $(x,y,xy)$ is a block or $(x,xy,y)$ is a block.

We can therefore assume that $|\{x,y,z\}|=3$. If $\{x,y,z\}$ is a block then we can take $z=xy$, \eqref{ca} holds and \eqref{cb} becomes $f(x,y)+f(xy,xy) = f(x,x)+f(y,xy)$, which holds whether $(x,y,xy)$ is a block or $(x,xy,y)$ is a block. If $\{x,y,z\}$ is not a block then $xy\cdot z\ne x\cdot yz$ because $L$ is anti-Pasch.
\end{proof}

\begin{theorem}\label{Th:GenStuhl}
Let $X$ be an oriented anti-Pasch Steiner loop. Then associativity propagates in $X$ if and only if $X$ has exponent $4$.
\end{theorem}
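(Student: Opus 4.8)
The plan is to reduce the question to a purely combinatorial property of the underlying Steiner loop $L$ by leaning on Lemma~\ref{Lm:Assoc}. The decisive observation is that the five clauses of that lemma involve only the components $x,y,z\in L$ and the scalar $d$, never the central coordinates $a,b,c$; hence whether a triple of elements of $X$ associates is a function of its image under the projection $\pi\colon X\to L$, $(a,x)\mapsto x$, alone. Write $P(x,y,z)$ for the predicate on $L^3$ that holds precisely when $(x,y,z)$ satisfies one of those clauses, so that a triple in $X$ associates iff $P$ holds on its $\pi$-image. Since $\pi$ is a homomorphism it carries $\langle u,v,w\rangle$ onto $\langle\pi u,\pi v,\pi w\rangle$, and every element of the latter is an image; therefore associativity propagates in $X$ if and only if the implication ``$P(x,y,z)\Rightarrow P(x',y',z')$ for all $x',y',z'\in\langle x,y,z\rangle$'' holds in $L$. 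Recall finally that $d=1$ is equivalent to $X$ having exponent $4$, and $d=0$ to $X$ having exponent $2$.

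For the ``exponent $4$'' direction assume $d=1$. Then $P(x,y,z)$ holds iff $e\in\{x,y,z\}$, or two of $x,y,z$ coincide, or $\{x,y,z\}$ is a block. In each case $\langle x,y,z\rangle$ is small: if $\{x,y,z\}$ is a block then $\langle x,y,z\rangle=\{e,x,y,z\}$ is a Klein four-loop, while in every other clause $x,y,z$ span at most two distinct non-identity elements, so $\langle x,y,z\rangle$ again has order at most $4$. I would then note that in a Steiner subloop of order at most $4$ any three distinct non-identity elements are the three non-identity elements of a Klein four-loop and hence form a block; so for an arbitrary triple $x',y',z'$ of such a subloop either they are three distinct non-identity elements forming a block, or some pair coincides, or the identity occurs, and in all these cases (using $d=1$ for the coincidence clauses) $P(x',y',z')$ holds. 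Thus $P$ propagates, and with it associativity.

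For the converse I argue contrapositively: supposing $d=0$, I produce a triple that associates but whose generated subloop contains a non-associating triple. Choosing any block $\{x,y,z\}$ of the (nontrivial) underlying system and lifts $u,v,w\in X$ of $x,y,z$, the triple $u,v,w$ associates by the block clause, and $\langle u,v,w\rangle$ surjects under $\pi$ onto $\langle x,y,z\rangle=\{e,x,y,z\}$; pick a lift $u_1$ of $x$ and a lift $w_1$ of $y$. The triple $u_1,u_1,w_1$ projects to $(x,x,y)$; the only clause of Lemma~\ref{Lm:Assoc} that could apply (the first two entries being equal) requires $d=1$, while the remaining clauses fail since $x$ and $y$ are distinct non-identity elements and a two-element set is never a block. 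Hence $P(x,x,y)$ is false and $u_1,u_1,w_1$ do not associate, defeating propagation and completing the equivalence. The only genuine obstacle is the reduction in the first paragraph, namely pinning down that associativity of a triple in $X$ depends on its $\pi$-image alone and that the relevant generated subloops have order at most $4$; one should also keep in mind the degenerate systems carrying no block, where $L$ has at most two elements and $X$ is a group, which are implicitly excluded (the word ``oriented'' being vacuous there).
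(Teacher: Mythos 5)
Your proof is correct and follows essentially the same route as the paper's: both directions rest entirely on Lemma~\ref{Lm:Assoc}, with the exponent-$4$ case handled by observing that an associating triple generates a subloop lying over a Klein four-subloop of $L$, on which all of the lemma's clauses hold when $d=1$, and the exponent-$2$ case killed by the failure of the coincidence clauses. The only cosmetic differences are that you work with the projected predicate in $L$ rather than directly in $X$, and that your non-propagation witness is a lifted block triple whereas the paper invokes Lemma~\ref{Lm:Diassoc} with the trivial associating triple $(x,y,e)$ --- both equally valid, and both (like the paper's argument) tacitly assume the underlying system has at least one block.
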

\begin{proof}
Let $X=\mathrm{Ext}(\mathbb Z_2,L,f)$ for some anti-Pasch Steiner loop $L$ and let $d=f(x,x)$ for some $x\in L\setminus\{e\}$. Note that if $x\ne y$ are nonidentity elements of $L$, then $\mathbb Z_2\times\{e,x,y,xy\}$ is a group if and only if $d=1$, by Lemma \ref{Lm:Assoc}.

Suppose now that $((a,x)*(b,y))*(c,z) = (a,x)*((b,y)*(c,z))$ for some elements of $X$. In all five cases of Lemma \ref{Lm:Assoc}, we can choose $u,v\in L$ so that $\langle (a,x),(b,y),(c,z)\rangle \le \mathbb Z_2\times\{e,u,v,uv\}$. Hence, if $d=1$ then associativity propagates in $X$. If $d=0$, consider any nonidentity elements $x\ne y\in L$. By Lemma \ref{Lm:Assoc}, $(0,x)*((0,x)*(0,y))\ne ((0,x)*(0,x))*(0,y)$ and $X$ is not diassociative.  We are done by Lemma \ref{Lm:Diassoc}.
\end{proof}

\section{Open problems}

We start with concrete problems concerning propagation of associativity in Steiner loops. By Example \ref{Ex:AssocDoesNotPropagate}, there exists a Steiner loop $X$ of order $28$ such that associativity propagates in $X$ but not in $\mathbf{H}(X)$.

\begin{problem}
What is the smallest order of a Steiner loop $X$ such that associativity propagates in $X$ but not in $\mathbf{H}(X)$?
\end{problem}

The same example shows that a Steiner loop in which associativity does not propagate might be a factor of a Steiner loop in which associativity propagates.

\begin{problem}\label{Pr:HomImageOfProp}
Is it true that for every Steiner loop $F$ (in which associativity does not propagate) there exists a Steiner loop $X$ in which associativity propagates and $F\in\mathbf{H}(X)$?
\end{problem}

By Corollary \ref{Cr:SpecialProp}, if associativity propagates in a Steiner loop $X$ and all subloops of $X$ are either abelian or nonabelian simple, then it propagates in $\mathbf{HSP}(X)$, too.

\begin{problem}
Characterize the class of Steiner loops $X$ for which associativity propagates in $\mathbf{HSP}(X)$.
\end{problem}

The discussion of Sections \ref{Sc:Basic}--\ref{Sc:Varieties} has been intentionally restricted to loops since we believe that the explicit description of subloops we have presented may be useful in the future. We expect that some of the results in these sections generalize to Mal'cev varieties and possibly to Goursat varieties.

\medskip

We now turn to more general questions concerning propagation of equations. These can be seen as suggestions for research programs. Recall that
\begin{displaymath}
    \mathcal V_{[\eqsymb]}=\setof{X\in\mathcal V}{\eqsymb\text{ propagates in }X}.
\end{displaymath}

\begin{question}
Given an equation $\eqsymb$ and a variety $\mathcal V$, when is the propagating core $\mathcal V_{[\eqsymb]}$ a variety? If $\mathcal V_{[\eqsymb]}$ is a variety, is it finitely based relative to $\mathcal V$?
\end{question}

\begin{question}
If $\eqsymb$ propagates in $X$, under which conditions does $\eqsymb$ propagate in $\mathbf{HSP}(X)$?
\end{question}

\begin{question}
Let $\mathcal V_i$, $i\in I$, be varieties in which $\eqsymb$ propagates. Under which conditions does $\eqsymb$ propagate in the join $\bigvee_{i\in I}\mathcal V_i$?
\end{question}

The following question generalizes Problem \ref{Pr:HomImageOfProp}.

\begin{question}
Given an equation $\eqsymb$ and a variety $\mathcal V$, under which conditions is $\mathcal V\subseteq \mathbf{H}(\mathcal V_{[\eqsymb]})$?
\end{question}

\begin{question}\label{Qu:last}
Given an equation $\eqsymb$ and a variety $\mathcal V$, is there an algebra $X\in\mathcal V$ such that $\mathbf{H}(X)\subseteq\mathcal V_{[\eqsymb]}$ but $\mathbf{HSP}(X)\not\subseteq\mathcal V_{[\eqsymb]}$?
\end{question}

Answering Question \ref{Qu:last} for a given $\eqsymb$ and $\mathcal V$ might not be difficult but it might be technically complicated. A possible strategy is to first find $Y,Z$ such that $Y\in\mathcal V_{[\eqsymb]}$, $Z\in\mathbf{H}(Y)$ but $Z\not\in\mathcal V_{[\eqsymb]}$, then embed $Y$ into a simple algebra $X\in\mathcal V_{[\eqsymb]}$. Then $\mathbf{H}(X)\subseteq\mathcal V_{[\eqsymb]}$ by simplicity but $Z\in\mathbf{HS}(X)\not\subseteq \mathcal V_{[\eqsymb]}$.

\end{document}